\documentclass[11pt]{article}
\usepackage{amsfonts}
\usepackage{graphics}
\usepackage{indentfirst}
\usepackage{color}
\usepackage{cite}
\usepackage{float}
\usepackage{caption}
\usepackage{latexsym}
\usepackage[paper=a4paper, left=1.6cm, right=1.6cm, top=1.8cm, bottom=1.6cm, headheight=5.5pt, footskip=0.8cm, footnotesep=0.8cm, centering, includefoot]{geometry}
\usepackage{amsmath}
\allowdisplaybreaks
\usepackage{amssymb}
\usepackage[colorlinks, linkcolor=red]{hyperref}
\hypersetup{urlcolor=red, citecolor=red}
\usepackage[dvips]{epsfig}
\usepackage{amscd}

\usepackage{amsthm}
\usepackage{mathrsfs}
\usepackage{verbatim}
\newtheorem{theorem}{Theorem}[section]
\newtheorem{remark}{Remark}[section]
\newtheorem{definition}{Definition}[section]
\newtheorem{lemma}{Lemma}[section]
\newtheorem{corollary}{Corollary}[section]
\newtheorem{proposition}{Proposition}[section]

\allowdisplaybreaks

\makeatletter
\@addtoreset{equation}{section}
\makeatother
\makeatletter
\@addtoreset{equation}{section}
\makeatother

\title{Global well-posedness of isentropic compressible Navier--Stokes equations with smallness on scaling-invariant quantity in a half-space
\thanks{This research was partially supported by National Natural Science Foundation of China (No. 12371227) and Fundamental Research Funds for the Central Universities (No. SWU--KU24001).}
}

\author{Lin Xu,\ Xin Zhong{\thanks{Corresponding author. E-mail addresses: mathxu@email.swu.edu.cn (L. Xu), xzhong1014@amss.ac.cn (X. Zhong).}}
	\date{}\\
	\footnotesize School of Mathematics and Statistics, Southwest University, Chongqing 400715, P. R. China}
\begin{document}
\maketitle

\begin{abstract}
	We investigate the initial-boundary value problem for the three-dimensional isentropic compressible
	Navier--Stokes equations in the upper half-space with the slip boundary
	conditions. We prove the global existence and uniqueness of strong
	solutions in the presence of vacuum and large  oscillations.
	Although scaling frameworks for compressible flows in domains with
	boundaries have been developed in several settings, the global well-posedness result  in the half-space remains far from complete.  The system with  far-field vacuum  admits a natural scaling
	structure that preserves both the half-space geometry and the slip
	boundary conditions. Motivated by this scaling, we identify the following
	\textit{scaling-invariant initial quantity}:
	$$
	\left[
	\|\rho_{0}\|_{L^{\infty}}^3
	\left(
	\frac12\|\sqrt{\rho_0} u_0\|_{L^{2}}^{2}
	+\frac{1}{\gamma-1}\|P(\rho_{0})\|_{L^{1}}
	\right)
	+\|\rho_{0}\|_{L^{\infty}}^{\frac{3-\gamma}{2}}
	\right]
	\left(
	\|\nabla u_0\|_{L^2}^2
	+\|P(\rho_{0})\|_{L^2}^2
	\right).
	$$
	Under the assumption that this quantity is sufficiently small, we establish the global well-posedness of strong solutions. This result provides a half-space counterpart of the scaling-invariant global theory for the Cauchy problem established by Wen (Adv. Math. 482 (2025), Paper No. 110628) and shows that the slip boundary condition is compatible with the system.
\end{abstract}

\textit{Key words and phrases}. Compressible Navier--Stokes equations; scaling-invariant; the half-space; vacuum.

2020 \textit{Mathematics Subject Classification}. 35Q30; 35A01; 76N10.

\setcounter{tocdepth}{2}
\tableofcontents

\section{Introduction}
\subsection{Background and motivation}

Half-space problems arise naturally in the study of fluid motions near flat
boundaries and play an important role in both mathematics and physics.
The half-space provides a basic model for wall-bounded flows,
including flows near solid surfaces, boundary layers, and fluid motions
affected by planar interfaces (see, e.g., \cite{SchlichtingGersten2017,LaugaStone2003}). It is an intermediate setting between the whole space and bounded domains: it retains the non-compactness of the whole space while incorporating boundary effects.

The present paper considers the motion of a compressible viscous fluid in
the upper half-space. More precisely, we study three-dimensional
isentropic compressible Navier--Stokes equations in the half-space:
\begin{align}\label{1}
\begin{cases}
\rho_{t}+\operatorname{div}(\rho u)=0, \\
(\rho u)_{t}+\operatorname{div}(\rho u\otimes u)
-\mu\Delta u-(\mu+\lambda)\nabla\operatorname{div}u+\nabla P(\rho)=0.
\end{cases}
\end{align}
Here
\begin{align*}
(t,x)\in\mathbb R_{+}\times\mathbb R^{3}_{+},\quad
\mathbb R^{3}_{+}\triangleq
\left\{x=(x_{1},x_{2},x_{3})\in\mathbb R^{3}:x_{3}>0\right\}.
\end{align*}
The unknowns $\rho=\rho(x,t)\geq0$ and
$u=u(x,t)=(u^{1},u^{2},u^{3})(x,t)$ denote the density and velocity field,
respectively. The pressure is given by the polytropic law
\begin{align*}
P(\rho)=A\rho^{\gamma},\quad A>0,\quad \gamma>1.
\end{align*}
Without loss of generality, we take $A=1$. The viscosity coefficients
$\mu$ and $\lambda$ satisfy the physical restrictions
\begin{align*}
\mu>0,\quad 2\mu+3\lambda\geq0.
\end{align*}

The system \eqref{1} is supplemented with the initial data
\begin{align}\label{2}
	(\rho,\rho u)(x,0)=(\rho_{0},\rho_{0}u_{0})(x),
\end{align}
and the far-field behavior
\begin{align}\label{3}
	(\rho,u)(x,t)\rightarrow(0,0)
	\quad \text{as } |x|\rightarrow\infty.
\end{align}
On the boundary
$\partial\mathbb R^{3}_{+}$, we impose the slip boundary condition
\begin{align}\label{4}
	u\cdot\nu=0,\quad
	\operatorname{curl}u\times\nu=0
	\quad \text{on } \partial\mathbb R^{3}_{+},
\end{align}
where $\nu=(0,0,-1)$ is the unit outward normal to
$\partial\mathbb R^{3}_{+}$. This condition is also referred to as the
Hodge boundary condition or the perfect-wall boundary condition in the
literature; see \cite[Chapter 4]{GN2018}.

The mathematical theory of the compressible Navier--Stokes equations has
attracted extensive attention over the past several decades. For the Cauchy
problem in the whole space, when the density is a small perturbation of a
strictly positive constant state, Matsumura--Nishida
\cite{MatsumuraNishida1980,MatsumuraNishida1983} established the global
well-posedness of smooth solutions for non-vacuum problem. Later, Hoff \cite{Hoff1995,Hoff1995a} developed a global theory
of weak solutions with discontinuous initial data. In this framework, the density is allowed to
have jump discontinuities, while it is still treated as a perturbation problem and remains require a positive lower bound for the density. Another influential approach is the critical-space theory. One may
formally introduce the parabolic scaling
\begin{align}\label{com}
	\rho^{\kappa}(t,x)\triangleq \rho(\kappa^{2}t,\kappa x),
	\qquad
	u^{\kappa}(t,x)\triangleq \kappa u(\kappa^{2}t,\kappa x),
	\qquad
	P^{\kappa}(t,x)\triangleq \kappa^{2}P(\kappa^{2}t,\kappa x).
\end{align}
Under this transformation, the compressible system keeps the same formal
structure if the pressure is regarded as an independent unknown. In the
framework of critical spaces, Danchin \cite{D2000} made a breakthrough by
establishing the global well-posedness of strong solutions for system \eqref{1} in $L^{2}$-based critical Besov spaces, for
initial density close to a constant equilibrium state. This result showed that
the critical functional framework is well adapted to the mixed
hyperbolic--parabolic structure of the compressible system. The theory was
further developed in
\cite{CharveDanchin2010,CMZ2010,H2011a,H2011b,Danchin2014}.
These works provide a fundamental global-in-time framework for compressible
viscous flows. However, they are essentially perturbative theories around a strictly
positive equilibrium state, and therefore exclude the presence of vacuum.

The presence of vacuum causes substantial additional difficulties. When
the density may vanish, the momentum equation becomes degenerate, and the
absence of a positive lower bound for the density prevents a direct use of
standard parabolic theory.  In this direction, a major breakthrough was achieved  by Lions \cite{P1998}, who
established the global existence of finite-energy weak solutions allowing
vacuum under suitable restrictions on the adiabatic exponent $\gamma$. This approach was later extended and refined in \cite{FNP01,F2004}.  Another important line of research concerns global smooth
solutions with vacuum. Huang--Li--Xin \cite{HLX12} proved the global well-posedness of classical solutions for the Cauchy problem with small initial energy, allowing vacuum and large oscillations of the initial density. Hong--Hou--Peng--Zhu \cite{HHPZ2024} further developed this direction by establishing well-posedness results allowing large initial energy in the nearly isothermal case. Subsequent works further
developed the small-energy method and clarified the roles of the \textit{ effective
viscous flux} and the uniform upper bound of the density; see \cite{LiXin2019,HuangLi2012} and the references therein.

We emphasize that the parabolic scaling in \eqref{com} is not
compatible with the  pressure law $P(\rho)=\rho^{\gamma}$. The  pressure term fixes a specific critical scaling. Lei--Xin \cite{LX19} observed the following scaling-invariant property:
\begin{align}\label{scaling}
	\rho^{\ell}(x,t)
	 \triangleq
	\ell^{\frac{2}{\gamma+1}}
	\rho\left(\ell x,\ell^{\frac{2\gamma}{\gamma+1}}t\right),\quad
	u^{\ell}(x,t)
	 \triangleq
	\ell^{\frac{\gamma-1}{\gamma+1}}
	u\left(\ell x,\ell^{\frac{2\gamma}{\gamma+1}}t\right),
\end{align}
where $\ell>0$ is the scaling parameter. The dependence of the scaling
exponents on $\gamma$ reflects the nonlinear structure of the pressure;
for more details, see \cite[Appendix]{XZ20262}.
Based on this scaling, Wen \cite{W25} first constructed the following
scaling-invariant initial quantity $S_{0}(\rho_{0},u_{0})$:
\begin{align}\label{scaling1}
	\|\rho_{0}\|_{L^{\infty}}^{3}
	\left(
	\|\sqrt{\rho_{0}}u_{0}\|_{L^{2}}^{2}
	+\|\rho_{0}^{\gamma}\|_{L^{1}}
	\right)
	\left(
	\|\nabla u_{0}\|_{L^{2}}^{2}
	+\|\rho_{0}^{\gamma}\|_{L^{2}}^{2}
	\right)
	\left[
	1+
	\|\rho_{0}\|_{L^{\infty}}^{3+\gamma}
	\left(
	\|\sqrt{\rho_{0}}u_{0}\|_{L^{2}}^{2}
	+\|\rho_{0}^{\gamma}\|_{L^{1}}
	\right)^{2}
	\right].
\end{align}
Here the scaling-invariant means that
\begin{align*}
S_{0}(\rho_{0},u_{0})=S_{0}(\rho^{\ell}_{0},u^{\ell}_{0})
\end{align*}
for the rescaled initial data. Provided that this scaling-invariant
initial quantity is sufficiently small, Wen \cite{W25} proved the global
existence of strong solutions for  far-field vacuum
Cauchy problem of system \eqref{1}. Recently, Xu--Zhong \cite{XZ20261} introduced another
scaling-invariant initial quantity of the form
\begin{align}\label{scaling2}
	C_{\mu,\lambda,\gamma}
	\left(1+\|\rho_{0}\|_{L^{\infty}}\right)^{3}
	\left(
	\|\sqrt{\rho_{0}}u_{0}\|_{L^{2}}^{2}
	+\|\rho_{0}^{\gamma}\|_{L^{1}}
	\right)
	\left[
	\mu\|\nabla u_{0}\|_{L^{2}}^{2}
	+(\mu+\lambda)\|\operatorname{div}u_{0}\|_{L^{2}}^{2}
	+\frac{\|\rho_{0}^{\gamma}\|_{L^{2}}^{2}}{2\mu+\lambda}
	\right],
\end{align}
where $C_{\mu,\lambda,\gamma}$ is a parameter depending on
$ \mu$, $\lambda$, and $\gamma$. In particular, the smallness is independent of the initial data and the physical parameters of the system. The above two results exhibit two different scaling-invariant initial
quantities for the same far-field vacuum problem. This indicates that the
choice of a scaling-invariant smallness quantity is not unique. This
naturally leads to the following questions:

\textbf{Q1: Can we construct new scaling-invariant quantities for the far-field vacuum problem?}

We  turn to initial-boundary value problems. Such problems are indispensable for describing viscous compressible
fluids in the presence of solid walls, obstacles, or confined containers.
The boundary creates additional
difficulties in the analysis and estimates, and the choice of boundary condition plays an essential role. The present paper focuses on half-space problems. For the no-slip or
Dirichlet boundary condition, Perepelitsa \cite{P2014} obtained
global weak solutions in a half-space near a equilibrium, where the
density stays strictly positive. In the  critical-space
framework, Danchin--Mucha \cite{DM2009} developed a critical
functional setting for the inhomogeneous incompressible Navier--Stokes
equations in the half-space, and Danchin--Zhang
\cite{DZ2014} further studied the corresponding problem with
only bounded density. Recently, Chikami--Ogawa--Shimizu
\cite{ChikamiOgawaShimizu2025} established local well-posedness in
critical spaces for system \eqref{1} in a nearly
half-space, using the
Lagrangian transform, boundary flattening and endpoint
$L^{1}$-maximal regularity. Ogawa--Shimizu \cite{OgawaShimizu2026} further considered the free boundary problem  in critical spaces. More recently, Wang--Zhong
\cite{WZ2026} studied the Dirichlet problem for \eqref{1} with vacuum and
unbounded density under a suitably small initial total energy.

We next recall the results under Navier or slip boundary conditions, which
are more closely related to our setting. Hoff \cite{Hoff2005} proved
the global existence of weak solutions in a half-space under the assumption that the initial energy is suitably
small. Later, Duan \cite{Duan2012} developed the global
theory of classical solutions for \eqref{1} in a half-space. In that work, the initial density is allowed to contain vacuum
and have large oscillations, provided that the initial energy is suitably
small. Subsequently, Song--Zhang \cite{SongZhang2021} considered the case of large external forces.
Duan--Huang \cite{DuanHuang2022} established the local existence of weak
solutions with higher regularity for \eqref{1}
in the presence of vacuum. Let us also mention some results on large-time behavior and singular
limits in half-spaces.
  The decay and
asymptotic behavior of solutions for \eqref{1} near a positive constant equilibrium has been studied in \cite{KageiKobayashi2002,KageiKobayashi2005}.   Wang--Wang \cite{WangWang2021} investigated the large-time behavior toward planar rarefaction waves in a three-dimensional half-space, while
Chang--Liu--Xu \cite{ChangLiuXu2025} established the nonlinear stability
of planar shock waves. Very recently,
Wang--Wu--Zhong \cite{WWZ2026} analyzed the incompressible
limit for the half-plane flows with large initial data and
vacuum. Gao--Peng--Wu--Yao \cite{GPWY2026} considered the anisotropic viscosity and established
global-in-time uniform regularity together with the global vanishing
vertical viscosity limit for small perturbations of a constant equilibrium
state. These works are closely related to decay and limiting mechanisms in
boundary problems.

Despite these developments, the global strong solution theory in the
half-space with vacuum is still far from complete. \textit{In particular, the
smallness assumptions in the existing global results are usually formulated
in terms of the initial energy (see \cite{Hoff2005,Duan2012}) and are not designed to reflect the
intrinsic scaling structure of the equations.}
  We also note that the critical regularity theory for system \eqref{1} in domains with boundary is much less developed
  than that for the Cauchy problem. For problems with boundary, important progress has been made
in bounded domains and nearly half-spaces; see, for instance,
\cite{DanchinTolksdorf2023,ChikamiOgawaShimizu2025}. However, a global
scaling-critical well-posedness theory for \eqref{1} in the half-space still seems to be
unavailable. This observation naturally leads to the second question
addressed in this paper:

\textbf{Q2: Can we establish a global well-posedness theory for the scaling  half-space problem?}

Motivated by the above two questions, the present paper gives an affirmative answer in the half-space.
We first observe that the scaling of \eqref{1} is
compatible with both the flat half-space geometry and the slip boundary
condition. More precisely, the scaling preserves the domain
$\mathbb R^{3}_{+}$ and keeps the boundary condition \eqref{4} unchanged. This observation motivates us to formulate a global theory under a scaling-invariant smallness condition. Then we construct \textit{a new
scaling-invariant initial quantity}, different from \eqref{scaling1} and \eqref{scaling2} in the previous Cauchy problem results, and prove the global existence and uniqueness of strong solutions provided that this quantity is sufficiently small.

\subsection{Main result}
Before stating our results, we introduce some notation that will be used
frequently throughout the paper. For simplicity, we use the shorthand
notation
\begin{align*}
\bar{\rho}\triangleq \|\rho_{0}\|_{L^{\infty}},\quad
\Omega\triangleq \mathbb R^{3}_{+},\quad
\dot{u}\triangleq u_{t}+u\cdot\nabla u,\quad
f_{i}\triangleq \partial_{i}f=\frac{\partial f}{\partial x_{i}}.
\end{align*}
The energy functional is defined by
\begin{align*}
E(t)\triangleq
\int_{\Omega}
\bigg(
\frac{1}{2}\rho |u|^{2}+\frac{1}{\gamma-1}P(\rho)
\bigg)(x,t)\,dx.
\end{align*}
We also set
\begin{align*}
D(t)\triangleq
\int_{\Omega}\big(|\nabla u|^{2}
+P^{2}\big)(x,t)\,dx.
\end{align*}
Moreover, the \textit{effective viscous flux} is defined by
\begin{align*}
F\triangleq
(2\mu+\lambda)\operatorname{div}u-P.
\end{align*}

For an integer $k\geq0$ and $1\leq q\leq\infty$, we define
\begin{align*}
D^{k,q}(\Omega)
=
\big\{
u\in L^{1}_{\mathrm{loc}}(\Omega):
\|\nabla^{k}u\|_{L^{q}(\Omega)}<\infty
\big\},
\quad
\|u\|_{D^{k,q}}=\|\nabla^{k}u\|_{L^{q}(\Omega)}.
\end{align*}
We also use the notation
\begin{align*}
\|u\|_{L^{q}}=\|u\|_{L^{q}(\Omega)},\quad
W^{k,q}(\Omega)=L^{q}(\Omega)\cap D^{k,q}(\Omega),\quad
D^{k}(\Omega)=D^{k,2}(\Omega),\quad
H^{k}(\Omega)=W^{k,2}(\Omega).
\end{align*}

The strong solutions considered in this paper are defined as follows.

\begin{definition}\label{def}
	Let $T>0$ and $q\in(3,6)$. Assume that the initial data
	$(\rho_{0}\geq 0,u_{0})$  satisfies
	\begin{align}\label{sou}
		\rho_{0}\in H^{1}\cap W^{1,q},
		\qquad
		u_{0}\in D^{1}.
	\end{align}
	A pair $(\rho,u)$ is called a strong solution to
	\eqref{1}--\eqref{4} in $\Omega\times[0,T]$ if it satisfies the
	following regularity properties:
	\begin{align}\label{defs}
		\begin{cases}
			\rho\in C\big([0,T];L^{2}\big)
			\cap L^{\infty}\big(0,T;H^{1}\cap W^{1,q}\big),
			\quad
			\rho_{t}\in L^{\infty}\big(0,T;L^{2}\big), \\[1mm]
			\rho u\in C\big([0,T];L^{2}\big),
			\quad
			u\in L^{\infty}\big(0,T;D^{1}\big)
			\cap L^{2}\big(0,T;D^{2}\big),
			\quad
			\sqrt{\rho}u_{t}\in L^{2}\big(0,T;L^{2}\big), \\[1mm]
			\sqrt{t}u\in L^{\infty}\big(0,T;D^{2}\big)
			\cap L^{2}\big(0,T;D^{2,q}\big),
			\quad
			\sqrt{t}u_{t}\in L^{2}\big(0,T;D^{1}\big).
		\end{cases}
	\end{align}
	Moreover, $(\rho,u)$ satisfies \eqref{1}--\eqref{4} almost everywhere in
	$\Omega\times(0,T)$. If the
	above properties hold for any $T>0$, then $(\rho,u)$ is called a
	global strong solution.
\end{definition}

System \eqref{1} is invariant under the scaling
\eqref{scaling} for any $\ell>0$. Moreover, this transformation preserves
both the upper half-space $\mathbb R^{3}_{+}$, the slip boundary condition \eqref{4}, and the far-field vacuum
condition \eqref{3}. We next verify that the slip boundary condition \eqref{4} is also invariant
under \eqref{scaling}. If $x\in\partial\mathbb R^3_+$, then $x_3=0$, and
hence $(\ell x)_3=\ell x_3=0$. Thus the boundary
$\partial\mathbb R^3_+$ is mapped onto itself by the spatial dilation
$x\mapsto \ell x$, and the outward unit normal remains unchanged. On the
boundary, we have
\begin{align*}
u^\ell(x,t)\cdot\nu
=
\ell^{\frac{\gamma-1}{\gamma+1}}
u\left(\ell x,\ell^{\frac{2\gamma}{\gamma+1}}t\right)\cdot\nu.
\end{align*}
Since $\ell x\in\partial\mathbb R^3_+$, the condition
$u\cdot\nu=0$ on $\partial\mathbb R^3_+$ implies
$u^\ell\cdot\nu=0$ on $\partial\mathbb R^3_+$. By the chain rule,
\begin{align*}
\operatorname{curl}u^\ell(x,t)\times\nu
=
\ell^{\frac{2\gamma}{\gamma+1}}
\left[
\operatorname{curl}u
\left(\ell x,\ell^{\frac{2\gamma}{\gamma+1}}t\right)
\times\nu
\right].
\end{align*}
Again, since $\ell x\in\partial\mathbb R^3_+$, the boundary condition
$\operatorname{curl}u\times\nu=0$ on $\partial\mathbb R^3_+$ implies
\begin{align*}
\operatorname{curl}u^\ell\times\nu=0
\qquad \text{on } \partial\mathbb R^3_+.
\end{align*}
Hence, the homogeneous slip boundary condition \eqref{4} is preserved under
the scaling \eqref{scaling}.

We are now in a position to state our   main result.

\begin{theorem}\label{th1}
	Let $q\in(3,6)$  and assume that the initial data
	$(\rho_{0}\geq0,u_{0})$  satisfies
	\begin{align}\label{xz}
		\rho_{0}\in L^{\gamma}\cap H^{1}\cap W^{1,q},
		\quad
		u_{0}\in D^{1}, \quad \sqrt{\rho_{0}}u_{0}\in L^{2}.
	\end{align}
	There exists a positive constant $\varepsilon_{0}$ depending only
	on the parameters $\mu$ and $\lambda$ such that if
	\begin{align}\label{sca}
		\left[
		\bar\rho^3
		\left(
		\frac12\|\sqrt{\rho_{0}}u_{0}\|_{L^{2}}^{2}
		+\frac{1}{\gamma-1}\|P(\rho_{0})\|_{L^{1}}
		\right)
		+\bar\rho^{\frac{3-\gamma}{2}}
		\right]
		\left(
		\|\nabla u_0\|_{L^2}^2
		+\|P(\rho_{0})\|_{L^2}^2
		\right)
		\leq \varepsilon_{0},
	\end{align}
	then the initial-boundary value problem \eqref{1}--\eqref{4} admits a
	unique global strong solution $(\rho,u)$.
\end{theorem}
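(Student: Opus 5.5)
The proof will follow the standard continuation scheme for strong solutions with vacuum: local well-posedness, uniform a priori bounds, and extension. First, a local strong solution in the sense of Definition \ref{def} is obtained on some $[0,T_*]$, for instance by adapting Duan--Huang \cite{DuanHuang2022} or Huang--Li--Xin \cite{HLX12} to the slip condition \eqref{4}. Second, I would use the scaling \eqref{scaling} to normalize $\bar\rho=1$. Under \eqref{scaling}, $E_0$ and $D_0$ rescale by powers of $\ell$, and the quantity in \eqref{sca} is invariant. This normalization leaves the single smallness condition $(E_0+1)D_0\le\varepsilon_0$, so in particular both $D_0$ and $E_0D_0$ are small. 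The heart of the proof is a bootstrap (continuity) argument: assume on $[0,T]$ that
\begin{align*}
0\le\rho\le 2,\qquad \sup_{t\le T}D(t)+\int_0^T\|\sqrt\rho\dot u\|_{L^2}^2dt\le 2C_1D_0,\qquad \sup_{t\le T}\sigma\|\sqrt\rho\dot u\|_{L^2}^2+\int_0^T\sigma\|\nabla\dot u\|_{L^2}^2dt\le 2C_1D_0,
\end{align*}
with $\sigma=\min\{1,t\}$. I would then show that each bound holds with the factor $2$ replaced by $1$ (with $7/4$ for the density) once $\varepsilon_0$ is small.

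The key tools come from the slip boundary condition. Since $u\cdot\nu=0$ and $\operatorname{curl}u\times\nu=0$ hold on the flat boundary, one has $\|\nabla u\|_{L^2}^2=\|\operatorname{div}u\|_{L^2}^2+\|\operatorname{curl}u\|_{L^2}^2$. The momentum equation can be written as $\rho\dot u=\nabla F-\mu\nabla\times\operatorname{curl}u$. Taking divergence and curl, with even/odd reflection across $x_3=0$, gives the elliptic estimates
\begin{align*}
\|\nabla F\|_{L^p}+\|\nabla\operatorname{curl}u\|_{L^p}\le C\|\rho\dot u\|_{L^p}.
\end{align*}
These hold because $\operatorname{curl}u\times\nu=0$ and the normal component of $\rho\dot u$ vanishes on the boundary, which yields the Neumann data for $F$. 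Boundary terms in the energy-type identities vanish for the same reason, which is the precise sense in which \eqref{4} is compatible with the system. The plan for the three estimates is as follows.

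(i) Multiply the momentum equation by $\dot u$ to get $\frac{d}{dt}(\mu\|\operatorname{curl}u\|^2+\|F\|^2/(2\mu+\lambda))+\|\sqrt\rho\dot u\|^2\le C\|\nabla u\|_{L^3}^3+\dots$. Together with the pressure equation, integrated as $P_t+\operatorname{div}(Pu)+(\gamma-1)P\operatorname{div}u=0$, this controls $D(t)$. The cubic term is bounded by $\|\nabla u\|_{L^2}^{3/2}\|\nabla u\|_{L^6}^{3/2}$ and then via $\|\rho\dot u\|_{L^2}+\|P\|_{L^6}$. Its time integral is made small using $\int_0^T\|\nabla u\|^2\le CE_0$ and $\bar\rho^{(3-\gamma)/2}$-type interpolation of $\|P\|_{L^6}$. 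This is exactly where both factors $E_0$ and $\bar\rho^{(3-\gamma)/2}$ in \eqref{sca} enter.

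(ii) Apply $\dot u^j[\partial_t+\operatorname{div}(u\,\cdot)]$ to the momentum equation in Hoff's manner, with boundary terms handled by $u\cdot\nu=0$ and the identity $\dot u\cdot\nu=-u\cdot\nabla\nu\cdot u=0$ on the flat boundary. This gives the time-weighted bound.

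(iii) Bound the density through the Zlotnik inequality applied to $(2\mu+\lambda)\frac{D}{Dt}\log\rho=-P-F$. This needs $\int_0^T\|F\|_{L^\infty}dt$ to be small on $[0,\sigma(T)]$ and bounded by $C+\frac{1}{2}\int P$-type terms for large time. I would handle it with $\|F\|_{L^\infty}\le C\|F\|_{L^2}^{1/4}\|\nabla F\|_{L^6}^{3/4}$ together with the weighted bounds.

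The main obstacle is (iii) combined with the cubic term in (i). The difficulty is closing every estimate using only powers of the single small product in \eqref{sca}, rather than smallness of $E_0$ alone. I would therefore track homogeneity carefully, so that every constant appears through a scale-invariant combination, verifying invariance by checking each bound against \eqref{scaling}. After the a priori bounds close, higher-order estimates follow routinely: $\nabla\rho$ in $L^2\cap L^q$ via Beale--Kato--Majda-type control with $\int\|\nabla u\|_{L^\infty}<\infty$, and $\sqrt t\,u$ in $D^2\cap D^{2,q}$. These yield continuation to all $T>0$. Uniqueness follows by a standard $L^2$ energy argument on the difference of two solutions, using the $\sqrt t$ weights, as in \cite{HLX12}.
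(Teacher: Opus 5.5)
Rescaling to $\bar\rho=1$ is legitimate, and your architecture differs from the paper's: you put the weighted $\dot u$ bounds inside the bootstrap and run a Huang--Li--Xin type Zlotnik argument via $\|F\|_{L^\infty}\le C\|F\|_{L^2}^{1/4}\|\nabla F\|_{L^6}^{3/4}$ and \eqref{41}. The paper bootstraps only $\rho\le 2\bar\rho$ and $(\bar\rho^3E(0)+\bar\rho^{\frac{3-\gamma}{2}})D(t)\le 2\varepsilon$, and bounds the density through the representation of $F$ by $(-\Delta)^{-1}\operatorname{div}(\rho u)$ plus a Riesz commutator. It uses weighted estimates only for continuation.

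The genuine gap is in step (i), on which everything else rests. You interpolate $\|\nabla u\|_{L^3}^3\le C\|\nabla u\|_{L^2}^{3/2}\|\nabla u\|_{L^6}^{3/2}$ before splitting off the pressure, so the pressure part is $\|\nabla u\|_{L^2}^{3/2}\|P\|_{L^6}^{3/2}$. With $\|P\|_{L^6}\le\|P\|_{L^\infty}^{2/3}\|P\|_{L^2}^{1/3}$, its time integral is only bounded by $C\bar\rho^{\gamma}\int_0^T\|\nabla u\|_{L^2}^{3/2}\|P\|_{L^2}^{1/2}\,dt$. The energy inequality does not control this uniformly in $T$; the available bound grows like $T^{1/4}$. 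Smallness of $\bar\rho^{\frac{3-\gamma}{2}}D(t)$ is pointwise in time and gives no integrability. Hölder in time instead gives $C\bar\rho^{3\gamma/4}E(0)^{3/4}(\int_0^T\|P\|_{L^3}^3dt)^{1/4}$, which is $D(0)$ times a power of the scale-invariant ratio $\bar\rho^{\gamma}E(0)/D(0)$. Condition \eqref{sca} does not bound that ratio: with $\bar\rho=1$, $E(0)$ may be of order $\varepsilon_0/D(0)$. Your bootstrap also contains no pressure dissipation, so the $\|P\|_{L^3}^3$-type contributions of $\int FP\operatorname{div}u$ are not time-integrable either. In short, this is not where $\bar\rho^{\frac{3-\gamma}{2}}$ can enter.

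The missing idea is the damping in the pressure equation. Inserting $\operatorname{div}u=(F+P)/(2\mu+\lambda)$ gives \eqref{xx2}, in particular $\frac{d}{dt}\|P\|_{L^2}^2+\frac{2\gamma-1}{2(2\mu+\lambda)}\|P\|_{L^3}^3\le C\|F\|_{L^3}^3$. The repair for (i) is then:
\begin{itemize}
\item estimate the cubic terms in $L^3$ via $\|\nabla u\|_{L^3}\le C(\|F\|_{L^3}+\|\operatorname{curl}u\|_{L^3}+\|P\|_{L^3})$;
\item absorb $C\|P\|_{L^3}^3$ by adding a large multiple of the damped inequality above;
\item treat $\|F\|_{L^3}^3+\|\operatorname{curl}u\|_{L^3}^3$ with \eqref{42} and Young;
\item handle the leftover $\bar\rho^3\|P\|_{L^2}^6$ through $\|P\|_{L^2}^4\le\|P\|_{L^1}\|P\|_{L^3}^3$.
\end{itemize}
This gives \eqref{2.15}, including $\int_0^T\|P\|_{L^3}^3dt\le CD(0)$, using only smallness of $\bar\rho^3E(0)\sup_tD(t)$.

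The same device is needed for your weighted bound in (ii) to return $CD(0)$ uniformly in $T$. There you must use the case $p=3$ of \eqref{xx2} and route $\int P|\nabla u||\nabla\dot u|\,dx$ through $\|P\|_{L^4}^4+\|\nabla u\|_{L^4}^4$. Routing it through $\|P\|_{L^\infty}^2\|\nabla u\|_{L^2}^2$ fails, because that has time integral of size $\bar\rho^{2\gamma}E(0)$.

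In the paper, $\bar\rho^{\frac{3-\gamma}{2}}$ is used only in the density bound. There the term $\bar\rho\int_{t_1}^{t_2}\|\nabla u\|_{L^2}\|P\|_{L^6}\,d\tau$ is allowed to grow linearly in $t_2-t_1$ and is absorbed into $N_1\le\bar\rho^{\gamma}/(2\mu+\lambda)$.

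With these repairs, your Zlotnik step should go through after normalization. It has the advantage of using only the half-space estimate \eqref{41}, rather than whole-space Riesz commutators.
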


\begin{remark}
It should be noted that the smallness condition \eqref{sca} is independent of the initial data. Moreover, it is worth mentioning that the quantity appeared in the left-hand side of \eqref{sca} is invariant under the scaling \eqref{scaling}.
\end{remark}

\begin{remark}
Compared with the quantities \eqref{scaling1} and \eqref{scaling2} used in \cite{W25} and \cite{XZ20261}, respectively, the scaling-invariant quantity \eqref{sca} has a simpler structure since it does not contain the additional higher-power terms
\begin{align*}
1+\|\rho_{0}\|_{L^{\infty}}^{3+\gamma}
\left(\|\sqrt{\rho_{0}}u_{0}\|_{L^{2}}^{2}
+\|\rho_{0}^{\gamma}\|_{L^{1}}
\right)^{2},\quad \left(1+\|\rho_{0}\|_{L^{\infty}}\right)^{3}.
\end{align*}
When $1<\gamma<\frac{3}{2}$,  \eqref{sca} shows that the initial
energy is not required to be small; instead, the allowable size of the initial energy depends on the upper bound of the initial density. In particular, a
large initial energy may be allowed when the density upper bound is
sufficiently small and the remaining factors in \eqref{sca} are properly
controlled. This feature is consistent with the recent result of
\cite{HHPZ2024}, where the authors proved the global existence and
uniqueness of classical solutions with large initial energy in the nearly
isothermal regime, namely for $\gamma$ sufficiently close to $1$.
\end{remark}

\begin{remark}
It should be pointed out that the no-slip boundary condition
\begin{align*}
u\big|_{\partial\mathbb R^{3}_{+}}=0
\end{align*}
is compatible with the scaling of the far-field vacuum problem.
Critical functional frameworks in the half-space with no-slip type
boundary conditions have been developed, for instance, by Danchin--Mucha
\cite{DM2009} and Danchin--Zhang \cite{DZ2014} for the inhomogeneous
Navier--Stokes equations. Nevertheless, for the compressible
Navier--Stokes equations with vacuum, the no-slip boundary condition
creates additional difficulties in the higher-order estimates, since it
does not yield simple homogeneous boundary conditions for the vorticity or the	 \textit{effective viscous flux}.
\end{remark}

\begin{remark}
The scaling compatibility of the slip boundary condition \eqref{4} is essential in the case of far-field vacuum. This compatibility is no longer valid for the Robin-type slip boundary condition with a fixed friction coefficient
\begin{align}\label{z1}
u\cdot\nu=0,\qquad
\operatorname{curl}u\times\nu=-k u
\quad \text{on } \partial\mathbb R^{3}_{+},
\end{align}
where $k>0$ is a constant. Moreover, this boundary condition
produces additional boundary terms in the higher-order estimates.  Duan \cite{Duan2012} treated a related Robin-type slip boundary condition with far-field non-vacuum, while whether one can establish the global strong-solution theory with far-field vacuum under the Robin-type slip boundary condition \eqref{z1} remains unclear.
\end{remark}

\begin{remark}
	Let us point out a useful geometric feature of the half-space. For general
	bounded domains with slip boundary conditions, higher-order estimates may
	produce nontrivial boundary terms. This is mainly because the outward normal
	vector $n=n(x)$ depends on the boundary point. For instance,
	differentiating the boundary condition $u\cdot n=0$ along the flow
	usually gives
\begin{align*}
	\dot u\cdot n
	=
	u_t\cdot n+u\cdot\nabla u\cdot n
	=
	-u\cdot\nabla n\cdot u
	\qquad \text{on } \partial\Omega.
\end{align*}
	Thus, in a general bounded domain, boundary integrals involving
	$\dot u\cdot n$ do not vanish and have to be estimated through the
	curvature of the boundary. In the present half-space case, the
	boundary is flat and
	\begin{align*}
	\nu=(0,0,-1),\quad \nabla \nu=0
	\quad \text{on } \partial\mathbb R^{3}_{+}.
\end{align*}
	Consequently, the curvature contribution disappears and one obtains
	\begin{align*}
	\dot u\cdot \nu=0
	\qquad \text{on } \partial\mathbb R^{3}_{+}.
\end{align*}
	Moreover, since
	$\partial\mathbb R^{3}_{+}=\{x\in\mathbb R^{3}:x_{3}=0\}$, the boundary
	condition \eqref{4} is equivalent to
	\begin{align*}
		u^{3}=0,\qquad
		\partial_{x_{3}}u^{1}=\partial_{x_{3}}u^{2}=0
		\qquad \text{on } \partial\mathbb R^{3}_{+}.
	\end{align*}
	Therefore, the boundary terms which usually arise in general bounded
	domains vanish in the half-space.
\end{remark}

\begin{remark}
It should be emphasized that the smallness condition in Theorems \ref{th1} depends only on the viscosity coefficients $\mu$ and $\lambda$. A natural further question is whether one can derive estimates that are uniform with respect to the viscosity parameters and then study singular limits involving these coefficients. We leave this interesting problem for future investigation.
\end{remark}

\subsection{Strategy of the proof}
We briefly explain the main ideas of the proofs. For the initial-boundary
value problem, the presence of the boundary brings additional analytical
difficulties. The boundary condition \eqref{4}   plays an essential role in the elliptic estimates for
the \textit{effective viscous flux} and the vorticity. The proofs are based
on a combination of scaling-invariant lower-order estimates, time-weighted
higher-order estimates, and a standard continuation argument. The main
difficulty is to keep the smallness assumption consistent with the
intrinsic scaling structures of the corresponding system, allowing
vacuum and large oscillations.

One of the main technical points is to establish elliptic estimates for the
\textit{effective viscous flux} and the vorticity in the half-space; see
Lemma \ref{FF}. Compared with the Cauchy problem
\cite{W25,XZ20261,XZ20262}, the presence of the boundary makes the
vorticity estimates more delicate. The slip boundary condition \eqref{4}
implies that two vorticity components satisfy homogeneous boundary
conditions, and hence they can be estimated by the standard $L^{p}$-theory
in the half-space (see \eqref{j1} and \eqref{j2}). The remaining vorticity component does not satisfy such
a boundary condition. To overcome this difficulty, we exploit the flat
geometry of the half-space and derive an elliptic equation in the horizontal
variables. Together with the two-dimensional Calder{\'o}n--Zygmund estimate,
this gives the desired $L^{p}$-control of the full vorticity (see \eqref{curl}). Once the vorticity is controlled, the estimate for the \textit{effective viscous flux} follows from the decomposition of the momentum equation \eqref{demo}. These boundary-compatible elliptic estimates are crucial for controlling the material derivative and closing the global a priori estimates.

For scaling-invariant lower-order estimates, the argument is based on a bootstrap procedure; see
Proposition \ref{0pro}. The first key step is to obtain a uniform upper
bound for the density. This is achieved by applying Zlotnik's inequality (see Lemma \ref{zo})
along particle trajectories. In this step, the pressure produces an
additional difficult term in \eqref{kun}, namely
\begin{align*}
\int_{t_1}^{t_2}
\|\nabla u\|_{L^2}
\|P\|_{L^6}\,d\tau.
\end{align*}
To handle this term, we introduce the scaling-invariant control quantity
\begin{align}\label{f1}
	\bar\rho^{\frac{3-\gamma}{2}}D(t),
\end{align}
which makes the pressure contribution sufficiently small so that it can be
absorbed into the coefficient $N_1$ in Zlotnik's inequality. In order to obtain the uniform upper bound for the
density, we also need the smallness of another scaling-invariant quantity
\begin{align}\label{ff}
	\bar\rho^3E(0)D(t),
\end{align}
see \eqref{III1-new}, \eqref{III2-first-new}, and \eqref{xsca1}.

We then derive a closed estimate for $D(t)$. Taking the $L^2$-inner
product of \eqref{1}$_2$ with $u_t$, using the boundary condition
\eqref{bian2}, and exploiting the \textit{effective viscous flux}, we
obtain the estimate \eqref{2.12}. To control the term
$\|P\|_{L^{3}}^{3}$ and obtain a closed estimate for $D(t)$, we use the
pressure equation to get
\begin{align}\label{ffx2}
	\frac{d}{d t}\|P\|_{L^{p}}^{p}
	+\frac{p \gamma-1}{2(2 \mu+\lambda)}
	\|P\|_{L^{p+1}}^{p+1}
	\leq C \|F\|_{L^{p+1}}^{p+1}.
\end{align}
In particular, \eqref{ffx2} implies
\begin{align*}
\frac{d}{dt}\|P\|_{L^2}^{2}
+\frac{2\gamma-1}{2(2\mu+\lambda)}
\|P\|_{L^{3}}^{3}
\leq C\|F\|_{L^{3}}^{3},
\end{align*}
which combined with \eqref{2.12} yields the desired estimate
\eqref{2.15} provided that the scaling-invariant quantity \eqref{ff} is
sufficiently small (see \eqref{sca3}). The two scaling-invariant quantities
\eqref{f1} and \eqref{ff} are used together to obtain the uniform upper
bound for the density in the bootstrap argument, while \eqref{2.15} together with the smallness of the initial scaling-invariant quantity
\eqref{sca} closes the bootstrap argument.

We next establish the time-weighted higher-order estimates. These estimates
are used to justify the time continuity of the strong solution and to
ensure that the local solution can be extended. A crucial estimate is
\begin{align}\label{key1}
	\int_{0}^{T}\|\nabla u\|_{L^{\infty}}\,dt<\infty,
\end{align}
which is needed for the propagation of the $H^{1}\cap W^{1,q}$
regularity of the density. The logarithmic  type
inequality \eqref{g4} plays an important role in deriving
\eqref{key1}. Applying $(-\Delta)^{-1}\operatorname{div}$ to the momentum
equation gives the identity
\begin{align*}
P
=
(-\Delta)^{-1}\operatorname{div}(\rho u)_{t}
+(-\Delta)^{-1}\operatorname{div}\operatorname{div}(\rho u\otimes u)
+(2\mu+\lambda)\operatorname{div}u.
\end{align*}
This along with the \textit{a priori} estimates already established
allows us to obtain the $L_{t}^{2}L_{x}^{2}$-norm of $P$ (see Proposition \ref{wen}). The estimate \eqref{ffx2} is crucial in
controlling the higher integrability of the pressure and in closing the
time-weighted estimates involving $\sqrt{\rho}\dot u$ and
$\nabla\dot u$ (see Lemma \ref{lem52}). Here the boundary condition \eqref{4} plays a key role. It implies that
\begin{align*}
\dot u\cdot\nu=\dot u^3=0,\qquad
\partial_{x_3}\dot u^1=\partial_{x_3}\dot u^2=0
\quad \text{on } \partial\Omega.
\end{align*}
Thus, $\dot u$ satisfies the same homogeneous slip condition on the
boundary. This property allows us to handle the boundary terms appearing in
\eqref{kun1}, \eqref{J2-est}, and \eqref{kk3}, such as
\begin{align*}
\partial_3 u^k\partial_k u^j
+\partial_k u^3\partial_k u^j
-\operatorname{div}u\partial_3 u^j,\quad
-\dot u^j\partial_j u^3\operatorname{div}u,\quad
P u^k\partial_k\dot u^j\nu_j.
\end{align*}
Finally, we apply a standard continuation argument to prove that the strong
solution exists globally in time.

The remainder of this paper is organized as follows. Section \ref{sec2}
collects the elliptic estimates in the half-space and several auxiliary
inequalities used throughout the paper. Section \ref{sec3} establishes the
scaling-invariant a priori estimates and the time-weighted estimates.
Finally, Section \ref{sec4} completes the proof of Theorem \ref{th1}.

\section{Preliminaries}\label{sec2}

In this section we collect some useful results that will be frequently
used in later sections. We begin with the local well-posedness of strong
solutions (see, e.g., \cite{GLLZ20,H21}).

\begin{proposition}[Local well-posedness]\label{local}
	Assume that the initial data $(\rho_{0},u_{0})$ satisfy \eqref{sou}.
	Then there exists a time $T>0$ such that the problem
	\eqref{1}--\eqref{4} admits a unique strong solution in
	$\Omega\times(0,T]$.
\end{proposition}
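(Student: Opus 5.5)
The plan is the standard route for strong solutions with vacuum: build solutions of a regularized, non-degenerate problem on truncated domains, obtain estimates that are uniform in the regularization, pass to the limit by compactness, and prove uniqueness by an energy argument at low regularity.

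\textbf{Regularization.} Replace $\rho_0$ by $\rho_0^\delta=\rho_0+\delta$ restricted to a large half-ball $\Omega_R=\mathbb R^3_+\cap B_R$, with a suitable cutoff near $\partial B_R$. Approximate $u_0$ by smooth fields $u_0^{\delta,R}$ that satisfy \eqref{4} on the flat part of the boundary. On the curved part, impose either Dirichlet conditions or the same slip condition. Because $\rho_0^\delta\ge\delta>0$, the system is uniformly parabolic in $u$ and a transport equation in $\rho$. For this problem, local existence of a regular solution on a short interval $[0,T_*]$ follows from the classical linearization and fixed-point argument. One solves the linear transport equation for $\rho$ with a given velocity $v$, then the linear Lam\'e-type parabolic system for $u$ with coefficient $\rho$ bounded below and the slip boundary condition, and shows that the map $v\mapsto u$ is a contraction in a low norm on a short time interval.

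\textbf{Uniform estimates and passage to the limit.} The main work is to obtain bounds independent of $\delta$ and $R$ on a time $T_*$ that depends only on $\bar\rho$, $\|\rho_0\|_{H^1\cap W^{1,q}}$ and $\|\nabla u_0\|_{L^2}$.

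\begin{enumerate}
\item Define $\Phi(t)=1+\|\rho\|_{H^1\cap W^{1,q}}+\|\nabla u\|_{L^2}$.
\item Test the momentum equation with $u_t$ and use the elliptic estimate for the Lam\'e operator under \eqref{4}:
\[
\|\nabla^2 u\|_{L^2}\le C\big(\|\rho\dot u\|_{L^2}+\|\nabla P\|_{L^2}\big).
\]
On the half-space this estimate can be obtained by even/odd reflection of the components. The result is
\[
\frac{d}{dt}\|\nabla u\|_{L^2}^2+\|\sqrt\rho u_t\|_{L^2}^2\le C\Phi^{N}.
\]
\item Apply $\partial_t$ to the momentum equation, multiply by $tu_t$, and derive time-weighted bounds for $\sqrt t\,\nabla u_t$ and $\sqrt t\,\nabla^2 u$.
\item Use these together with $W^{2,q}$ elliptic estimates to control $\int_0^t\|\nabla u\|_{W^{1,q}}\,ds$, and hence close the transport estimates for $\nabla\rho$.
\end{enumerate}

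These estimates close by a Gronwall-type inequality for $\Phi$ on a short interval. Passing to the limit $\delta\to0$ and $R\to\infty$ by Aubin--Lions compactness then gives a solution with the regularity \eqref{defs}. Continuity in time is recovered from the equations.

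\textbf{Main obstacle.} The key difficulty is that the vacuum gives no control of $u$ itself, only of $\nabla u$, and the time derivative is weighted by $\sqrt\rho$. Two points need care. The elliptic estimates must use only $\|\nabla u\|_{L^2}$, $\|\sqrt\rho u_t\|_{L^2}$ and the $D^{1}$/$D^2$ norms; for this, the embedding $D^1\hookrightarrow L^6$ on $\mathbb R^3_+$ replaces any $L^2$ bound on $u$. The boundary terms must vanish, and they do: on the flat boundary, \eqref{4} gives $u^3=0$ and $\partial_3u^1=\partial_3u^2=0$, and these conditions are preserved by $\partial_t$.

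\textbf{Uniqueness.} Take two solutions $(\rho_i,u_i)$ and set $\bar\rho=\rho_1-\rho_2$, $\bar u=u_1-u_2$. Estimate $\|\bar\rho\|_{L^{3/2}}$ together with $\|\sqrt{\rho_1}\bar u\|_{L^2}^2+\int_0^t\|\nabla\bar u\|_{L^2}^2\,ds$. The key tools are the bound $\int_0^T t\|\nabla u\|_{L^\infty}^2\,dt<\infty$ and a Gronwall argument with the weight $t^{-1/2}$, following the Cauchy-problem argument in \cite{H21,GLLZ20}. This weighted bound is the place where the assumption $q>3$ is used.
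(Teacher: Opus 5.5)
The paper gives no argument of its own for this proposition; it only points to the whole-space theory of \cite{GLLZ20,H21}. Your outline re-runs that whole-space scheme inside $\mathbb R^3_+$. It uses positive-density regularization, the $u_t$-energy and $\sqrt t$-weighted estimates, $W^{2,q}$ bounds to propagate $\nabla\rho$, compactness, and Eulerian Gronwall uniqueness. The boundary terms drop because $u^3=\partial_3u^1=\partial_3u^2=0$ survives $\partial_t$. That is a legitimate route, but a much shorter one also makes the paper's citation rigorous.

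With $S=\mathrm{diag}(1,1,-1)$, the map $(\rho,u)(x,t)\mapsto(\rho(Sx,t),Su(Sx,t))$ sends solutions of \eqref{1} to solutions. Extend $\rho_0,u_0^1,u_0^2$ evenly and $u_0^3$ oddly, solve the Cauchy problem, and use whole-space uniqueness to conclude that the solution is symmetric. Then $u^3$ odd and $u^1,u^2$ even give \eqref{4} on $\{x_3=0\}$.

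Conversely, the slip conditions are exactly what makes the even/odd extension of a half-space strong solution a whole-space strong solution. The odd quantities $u^3,\partial_1u^3,\partial_2u^3,\partial_3u^1,\partial_3u^2$ have zero trace, so the extension of $u(t)$ lies in $D^2(\mathbb R^3)$ and the equations hold a.e.\ across the plane. Half-space uniqueness is then inherited from the whole space. (Both routes need $u_0^3=0$ on $\partial\Omega$, which you already use implicitly when approximating $u_0$ by fields satisfying \eqref{4}.) Your route buys self-containedness and shows where the boundary condition enters; reflection buys that nothing has to be re-proved.

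If you keep your route, two steps need repair.

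First, your approximate problems live on $\Omega_R=\mathbb R^3_+\cap B_R$, which has an edge where $\{x_3=0\}$ meets $\partial B_R$. The half-space Lam\'e estimate you invoke does not cover $\Omega_R$, and the classical parabolic fixed-point theory is stated for smooth domains. The cure is again reflection. Both of your choices on the sphere (Dirichlet or slip) are invariant under $S$, so the truncated problem is equivalent to a symmetric problem on the smooth ball $B_R$, with $R$-independent elliptic constants by scaling. At that point your scheme is literally the whole-space approximation scheme applied to symmetric data.

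Second, in the uniqueness argument $\|\bar\rho\|_{L^{3/2}}$ handles $\int\bar\rho\,\dot u_2\cdot\bar u\,dx\le\|\bar\rho\|_{L^{3/2}}\|\dot u_2\|_{L^6}\|\bar u\|_{L^6}$. It does not handle the pressure term $\int(P(\rho_1)-P(\rho_2))\operatorname{div}\bar u\,dx$, which needs $\|\bar\rho\|_{L^2}$ as well. Interpolating $L^2$ between $L^{3/2}$ and $L^\infty$ only yields a sublinear Gronwall inequality, which does not force $\bar u=0$. The $L^2$ transport estimate uses $\|\bar u\cdot\nabla\rho_2\|_{L^2}\le\|\bar u\|_{L^6}\|\nabla\rho_2\|_{L^3}$; this, together with $\int_0^T\|\nabla u_i\|_{L^\infty}dt<\infty$, is where $q>3$ enters. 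One then gets $\|\bar\rho(t)\|_{L^{3/2}\cap L^2}\le Ct^{1/2}(\int_0^t\|\nabla\bar u\|_{L^2}^2ds)^{1/2}$. The weighted quantity that closes the Gronwall argument is $t\|\nabla\dot u_2\|_{L^2}^2\in L^1(0,T)$, while $\|\nabla u_i\|_{L^\infty}$ is only needed in $L^1(0,T)$, not with the weight $t\|\nabla u\|_{L^\infty}^2$ you single out.
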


We recall the classical Gagliardo--Nirenberg inequality in the half-space (see \cite[Theorem, p.125]{N1959}).

\begin{lemma}\label{og}
	Let $m$ be a positive integer, and let $p$, $q$, $r\in[1,\infty]$. If
	$u\in L^{q}\cap D^{m,p}$, then for any integer $k\in[0,m)$,
	\begin{align*}
	\|\nabla^{k}u\|_{L^{r}}
	\leq
	C\|\nabla^{m}u\|_{L^{p}}^{\alpha}
	\|u\|_{L^{q}}^{1-\alpha},
\end{align*}
	where
	\begin{align*}
	\frac{1}{r}-\frac{k}{3}
	=
	\alpha\left(\frac{1}{p}-\frac{m}{3}\right)
	+(1-\alpha)\frac{1}{q},
\end{align*}
	with
	\begin{align*}
	\left\{
	\begin{array}{ll}
		\alpha\in\left[\frac{k}{m},1\right),
		& \text{if } p\in(1,\infty)
		\text{ and } m-k-\frac{3}{p}\in\mathbb N, \\[2mm]
		\alpha\in\left[\frac{k}{m},1\right],
		& \text{otherwise.}
	\end{array}
	\right.
\end{align*}
	When $q=\infty$, $k=0$, and $mp<3$, one additionally assumes either
	$u(x)\to0$ as $|x|\to\infty$, or $u\in L^{z}$ for some finite
	$z\geq1$. The constant $C$ depends only on $m$, $k$, $p$, $q$, and $\alpha$.
	
	In particular, the following estimates will be used frequently:
	\begin{align*}
	\|u\|_{L^{\infty}}
	\leq
	C
	\|u\|_{L^{q}}^{\frac{q(p-3)}{3p+q(p-3)}}
	\|\nabla u\|_{L^{p}}^{\frac{3p}{3p+q(p-3)}},
\end{align*}
	for $u\in L^{q}\cap W^{1,p}$, $1<q<\infty$, and $3<p<\infty$, and
	\begin{align*}
	\|u\|_{L^{r}}
	\leq
	C
	\|u\|_{L^{2}}^{\frac{6-r}{2r}}
	\|\nabla u\|_{L^{2}}^{\frac{3r-6}{2r}},
	\qquad
	2\leq r\leq6.
\end{align*}
	As a special case, taking $k=0$, $m=1$, $\alpha=1$, and $p=2$, we
	obtain
	\begin{align*}
	\|u\|_{L^{6}}
	\leq
	C\|\nabla u\|_{L^{2}}.
\end{align*}
\end{lemma}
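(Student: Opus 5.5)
The plan is to reduce the statement to the classical whole-space Gagliardo--Nirenberg inequality of \cite{N1959} by means of a reflection extension across the flat boundary $\{x_3=0\}$. I would first prove everything for smooth $u$ on $\overline{\Omega}$ with the appropriate decay, and then pass to general $u\in L^{q}\cap D^{m,p}$ by density. For such $u$, I define a Hestenes-type higher-order reflection
\begin{align*}
(Eu)(x',x_3)=
\begin{cases}
u(x',x_3), & x_3>0,\\
\sum_{j=1}^{m} c_j\,u(x',-j x_3), & x_3<0.
\end{cases}
\end{align*}
The constants $c_j$ are chosen by solving the Vandermonde system
\begin{align*}
\sum_{j=1}^{m} c_j(-j)^{l}=1,\qquad l=0,\dots,m-1.
\end{align*}
This choice makes all normal derivatives of order at most $m-1$ match across $x_3=0$. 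Consequently $Eu\in W^{m,p}_{\mathrm{loc}}(\mathbb R^3)$, with no singular part of $\nabla^m Eu$ concentrated on the interface.

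Next I would record the key properties of $E$. The operator is linear and commutes with the dilations $x\mapsto\ell x$. By the change of variables $x_3\mapsto -jx_3$ in each piece,
\begin{align*}
\|Eu\|_{L^{s}(\mathbb R^3)}\le C\|u\|_{L^{s}(\Omega)}\quad (1\le s\le\infty),\qquad
\|\nabla^{m}Eu\|_{L^{p}(\mathbb R^3)}\le C\|\nabla^{m}u\|_{L^{p}(\Omega)},
\end{align*}
where $C$ depends only on $m$ and $s$ (resp. $p$). In particular, the homogeneous seminorm is controlled with no lower-order terms, which is exactly what scale-invariant inequalities require. I then apply the whole-space inequality to $Eu$, with the same exponents and the same admissible range of $\alpha$; the exceptional case $p\in(1,\infty)$, $m-k-\frac3p\in\mathbb N$ is inherited verbatim. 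If $q=\infty$, $k=0$, $mp<3$, the decay of $u$ (or $u\in L^z$) transfers to $Eu$. Restricting to $\Omega$ gives
\begin{align*}
\|\nabla^{k}u\|_{L^{r}(\Omega)}\le\|\nabla^{k}Eu\|_{L^{r}(\mathbb R^3)}\le C\|\nabla^{m}Eu\|_{L^{p}}^{\alpha}\|Eu\|_{L^{q}}^{1-\alpha}\le C\|\nabla^{m}u\|_{L^{p}(\Omega)}^{\alpha}\|u\|_{L^{q}(\Omega)}^{1-\alpha}.
\end{align*}

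The special cases are then pure exponent bookkeeping.
\begin{itemize}
\item For $k=0$, $m=1$, $r=\infty$ and $3<p<\infty$, the relation $0=\alpha(\frac1p-\frac13)+\frac{1-\alpha}{q}$ gives, after multiplying by $3pq$, $\alpha=\frac{3p}{3p+q(p-3)}$. This is the first displayed bound, and it is not an excluded endpoint since $1-\frac3p\notin\mathbb N$.
\item For $k=0$, $m=1$, $p=q=2$ and $2\le r\le6$, the relation $\frac1r=\frac{\alpha}{6}+\frac{1-\alpha}{2}$ gives $\alpha=\frac{3r-6}{2r}$, which is the second bound.
\item For $r=6$ we get $\alpha=1$, which yields $\|u\|_{L^6}\le C\|\nabla u\|_{L^2}$ under the far-field decay hypothesis.
\end{itemize}

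The main obstacle is the justification that $\nabla^m Eu$ has no distributional part supported on $\{x_3=0\}$, so that the homogeneous bound holds with a constant independent of any lower-order norm. I would verify this by checking the jump relations produced by the Vandermonde conditions in the trace sense, first for smooth functions. I would then conclude by density of smooth functions in $L^q\cap D^{m,p}(\Omega)$; near infinity this uses a standard cutoff argument, which is where the decay assumption in the case $q=\infty$, $mp<3$ is needed.
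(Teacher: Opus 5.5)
Your argument is correct, but it is not the paper's route. The paper gives no proof: it only cites Nirenberg's theorem, which is formulated on the whole space. On general domains that inequality typically holds only with an extra lower-order term, so the citation leaves the half-space case implicit.

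Your Hestenes reflection supplies exactly that missing step. Because the boundary is flat, $E$ commutes with dilations and is bounded simultaneously on $L^{s}$ and on the homogeneous seminorm $\|\nabla^{m}\cdot\|_{L^{p}}$. The whole-space inequality therefore transfers to $\mathbb R^{3}_{+}$ with the same exponents, the same admissible range of $\alpha$, the same exceptional cases, and no lower-order term. That is precisely the scale-invariant form the paper needs, and it explains why $C$ depends only on the exponents. The citation buys brevity; your route makes the half-space version transparent. For $m=1$, which covers every use of the lemma in the paper, your construction is just the even reflection $u(x',-x_3)$ with $c_1=1$.

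One technical correction: your closing density step is unnecessary, and as phrased it fails when $p=\infty$ or $q=\infty$, where smooth functions are not norm-dense. You only need the identity $\nabla^{m}Eu=$ (piecewise $\nabla^{m}$), and that identity is local. Since $u\in L^{1}_{\mathrm{loc}}$ with $\nabla^{m}u\in L^{p}$ already gives $u\in W^{m,p}(\Omega\cap B_R)$ for every $R$, you can prove it by approximating in $W^{m,1}$ on half-balls (e.g.\ mollify after shifting inward in $x_3$). Then apply the whole-space theorem to $Eu$ directly. The decay hypothesis for $q=\infty$, $k=0$, $mp<3$ enters only through that theorem, where it rules out constants, and you already transfer it correctly to $Eu$.
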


Next, we consider the Poisson equation with the homogeneous Dirichlet
boundary condition
\begin{align}\label{2.4}
\begin{cases}
		\Delta u=\partial_{x_{i}}g,
		& x\in\Omega, \\
		u=0,
		& x\in\partial\Omega.
	\end{cases}
\end{align}
The following elliptic estimates can be found in \cite[Lemma 4]{P2014}.

\begin{lemma}\label{zell}
	The following assertions hold.
	\begin{itemize}
		\item[(i)]
		If $g\in L^{2}$, then problem \eqref{2.4} admits a unique weak solution.
		
		\item[(ii)]
		If $g\in L^{2}\cap L^{p}$ for some $p\in(1,\infty)$, then there
		exists a constant $C(p)>0$ such that
		\begin{align*}
		\|\nabla u\|_{L^{p}}
		\leq
		C(p)\|g\|_{L^{p}}.
	\end{align*}
		
		\item[(iii)]
		If $g\in L^{2}$ and $\partial_{x_{i}}g\in L^{p}$ for some
		$p\in(1,\infty)$, then there exists a constant $C(p)>0$ such that
		\begin{align*}
		\|\nabla^{2}u\|_{L^{p}}
		\leq
		C(p)\|\partial_{x_{i}}g\|_{L^{p}}.
	\end{align*}
	\end{itemize}
\end{lemma}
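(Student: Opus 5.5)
The plan is to prove Lemma \ref{zell} by odd reflection across $\partial\Omega=\{x_3=0\}$, which reduces everything to whole-space Calder\'on--Zygmund theory.

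\emph{Step (i): existence and uniqueness.} Work in the homogeneous space $\dot H^1_0(\Omega)$, the completion of $C_c^\infty(\Omega)$ under $\|\nabla\cdot\|_{L^2}$; by Lemma \ref{og} it embeds into $L^6$. A weak solution of \eqref{2.4} is a $u\in\dot H^1_0$ with
\begin{align*}
\int_\Omega \nabla u\cdot\nabla\varphi\,dx=\int_\Omega g\,\partial_i\varphi\,dx\qquad\text{for all }\varphi\in\dot H^1_0 .
\end{align*}
The right-hand side is a bounded linear functional, with norm at most $\|g\|_{L^2}$. The Riesz representation theorem therefore gives a unique $u$, together with the bound $\|\nabla u\|_{L^2}\le\|g\|_{L^2}$.

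\emph{Step (ii): the $L^p$ gradient bound.} Extend $u$ oddly in $x_3$: set $\tilde u(x',x_3)=-u(x',-x_3)$ for $x_3<0$. The extension of $g$ depends on the direction $i$.
\begin{itemize}
\item For $i=1,2$, extend $g$ oddly.
\item For $i=3$, extend $g$ evenly, since $\partial_3$ of an even function is odd.
\end{itemize}
Next check that $\Delta\tilde u=\partial_i\tilde g$ holds in $\mathcal D'(\mathbb R^3)$. Take a test function $\varphi$ on $\mathbb R^3$ and split it into even and odd parts. The even part pairs to zero on both sides by symmetry. The odd part restricted to $\Omega$ vanishes on $\partial\Omega$, so it is an admissible test function in the weak formulation of Step (i). The trace $u=0$ guarantees that $\tilde u\in\dot H^1(\mathbb R^3)$ with no jump across the plane.

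By uniqueness in $\dot H^1(\mathbb R^3)$ we then have $\tilde u=\Delta^{-1}\partial_i\tilde g$, so $\nabla\tilde u=\nabla\Delta^{-1}\partial_i\tilde g$ is a matrix of Riesz transforms $R_jR_i$ applied to $\tilde g$. Boundedness of Riesz transforms on $L^p(\mathbb R^3)$ for $1<p<\infty$ gives $\|\nabla u\|_{L^p(\Omega)}\le\|\nabla\tilde u\|_{L^p}\le C(p)\|\tilde g\|_{L^p}\le 2C(p)\|g\|_{L^p(\Omega)}$. The assumption $g\in L^2$ is what makes $u$, and hence the identification with the Riesz transform formula, meaningful.

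\emph{Step (iii): the second-derivative bound.} Use the same reflection and write $\nabla^2\tilde u=\nabla^2\Delta^{-1}(\partial_i\tilde g)$, which is again a Riesz transform composition $R_jR_k$ applied to $\partial_i\tilde g$. This gives $\|\nabla^2\tilde u\|_{L^p}\le C\|\partial_i\tilde g\|_{L^p}$.

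\emph{Main obstacle.} The key point is that $\partial_i\tilde g$ is the reflected extension of $\partial_i g$ with no singular part on $\{x_3=0\}$.
\begin{itemize}
\item For $i=1,2$, tangential derivatives commute with the odd reflection, so no delta mass appears.
\item For $i=3$, $g$ is extended evenly, and the distributional $\partial_3$ of an even extension has no delta term. This holds with no trace condition on $g$, because the jump of an even function across the plane is zero. Consequently $\partial_3\tilde g$ equals the odd extension of $\partial_3 g$.
\end{itemize}
In both cases $\|\partial_i\tilde g\|_{L^p(\mathbb R^3)}\le 2\|\partial_i g\|_{L^p(\Omega)}$. To make the even-extension argument rigorous, I would verify it by integrating by parts against test functions, approximating $g$ by smooth functions in $L^2$ with $\partial_i g$ in $L^p$. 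This regularity bookkeeping, together with justifying that $\nabla^2\tilde u$ coincides with the Riesz transform expression rather than differing by a harmonic polynomial, is the step I expect to need the most care. It is resolved by the decay of $\nabla u\in L^2$.
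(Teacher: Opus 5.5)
Your proof is correct. The paper gives no argument of its own for this lemma; it imports it from \cite[Lemma 4]{P2014}, so the comparison is between a citation and your self-contained proof.

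Your reflection (odd in $u$; odd in $g$ for $i=1,2$, even for $i=3$) is equivalent to two other formulations. One is to represent $u$ through the half-space Dirichlet Green's function $\Gamma(x-y)-\Gamma(x-y^{*})$, with $y^{*}=(y_1,y_2,-y_3)$, and move $\partial_{y_i}$ onto the kernel. The other is to convolve $\nabla\Gamma$ with your extension $\tilde g$. The two give the same thing.

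What your route buys is transparency:
\begin{itemize}
\item It reduces (ii) and (iii) to $L^p$-boundedness of $R_jR_k$.
\item It shows why (iii) needs no trace or compatibility condition on $g$: the parity is chosen so that $\partial_i\tilde g$ has no singular part on $\{x_3=0\}$.
\item It uses the same even--odd extension mechanism the paper invokes, again only by citation to \cite{SWZ2011}, for the Lam\'e estimates in Lemma \ref{ellp}.
\end{itemize}
The citation buys brevity.

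The step you left as bookkeeping, identifying $\nabla^2\tilde u$ with $-R_jR_k(\partial_i\tilde g)$ when $\partial_i\tilde g$ is only known to be in $L^p$, closes cleanly by mollification. Starting from your Step (ii) identity $\nabla\tilde u=-R R\tilde g$, note that $\partial_i\tilde g*\eta_\epsilon=\partial_i(\tilde g*\eta_\epsilon)$ lies in $L^2\cap L^p$. On that space the $L^2$ and $L^p$ definitions of $R_jR_k$ agree. This gives $\|\nabla^2(\tilde u*\eta_\epsilon)\|_{L^p}\le C\|\partial_i\tilde g\|_{L^p}$ uniformly in $\epsilon$. Weak compactness in $L^p$ for $1<p<\infty$ then transfers the bound to $\nabla^2\tilde u$, with no further Liouville-type argument.
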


Consider the following boundary value problem for the Lam\'e operator:
\begin{align}\label{dd}
	\begin{cases}
\mu\Delta u+(\mu+\lambda)\nabla\operatorname{div}u=K,
		& \text{in } \Omega, \\
u^{3}=\partial_{3}u^{1}=\partial_{3}u^{2}=0,
		& \text{on } \partial\Omega.
	\end{cases}
\end{align}
The corresponding Lam\'e estimates in the half-space follow from the
standard whole-space estimates by using the even--odd extension compatible
with the slip boundary condition \eqref{4}; see
\cite[Proposition 2.1]{SWZ2011}.

\begin{lemma}\label{ellp}
	Let $q\in(1,\infty)$, and let $u$ be a solution of problem \eqref{dd}.
	Then there exists a constant $C>0$, depending only on $\lambda$, $\mu$,
	and $q$, such that the following estimates hold:
	\begin{itemize}
		\item[(i)]
		If $K\in L^{q}$, then
		\begin{align*}
		\|\nabla^{2}u\|_{L^{q}}
		\leq
		C\|K\|_{L^{q}}.
	\end{align*}
		
		\item[(ii)]
		If $K\in W^{-1,q}$, namely $K=\operatorname{div}k$ with
		$k=(k_{ij})_{3\times3}$ and $k_{ij}\in L^{q}$, then
	\begin{align*}
		\|\nabla u\|_{L^{q}}
		\leq
		C\|k\|_{L^{q}}.
	\end{align*}
	\end{itemize}
\end{lemma}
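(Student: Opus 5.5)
The plan is to reduce Lemma \ref{ellp} to the classical whole-space $L^q$ theory for the Lam\'e operator by a reflection across $\{x_3=0\}$ that respects the slip condition. For a function $f$ on $\Omega$, write $f^{e}$ for its even extension, $f^{e}(x',x_3)=f(x',-x_3)$ for $x_3<0$, and $f^{o}$ for its odd extension, $f^{o}(x',x_3)=-f(x',-x_3)$ for $x_3<0$. We extend $u$ by
\begin{align*}
\tilde u=(\,(u^1)^{e},\,(u^2)^{e},\,(u^3)^{o}\,),
\qquad
\tilde K=(\,(K^1)^{e},\,(K^2)^{e},\,(K^3)^{o}\,).
\end{align*}
The boundary conditions $u^3=0$ and $\partial_3u^1=\partial_3u^2=0$ on $\partial\Omega$ are exactly what is needed for $\tilde u$ to lie in $W^{2,q}_{\rm loc}$ (or $D^{2,q}$) across the plane, with no jump in the value or the normal derivative of any component.

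The first step is to check that $\mu\Delta\tilde u+(\mu+\lambda)\nabla\operatorname{div}\tilde u=\tilde K$ holds in $\mathbb R^3$ in the sense of distributions. Under reflection, $\operatorname{div}\tilde u$ is even, because $\partial_3$ of an odd function is even. Hence $\nabla\operatorname{div}\tilde u$ has even horizontal components and an odd vertical component, which is the same parity pattern as $\tilde K$. The Laplacian preserves parity componentwise. So the equation holds pointwise for $x_3\neq0$. Since $\tilde u$ has no jumps in values or normal derivatives across $\{x_3=0\}$, no singular measure appears on the plane. To verify this, I would test against $\varphi\in C_c^\infty(\mathbb R^3)$, integrate by parts separately on each half-space, and observe that the boundary terms
\begin{align*}
\int_{\{x_3=0\}} \partial_3\tilde u^i\,\varphi^i,
\qquad
\int_{\{x_3=0\}} \tilde u^i\,\partial_3\varphi^i,
\qquad
\int_{\{x_3=0\}} \operatorname{div}\tilde u\,\varphi^3
\end{align*}
cancel between the two sides, using the parity of each component. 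This bookkeeping of boundary terms is the main point to get right. In particular, for the trace of $\operatorname{div}\tilde u$ one uses that it is even, hence continuous across the plane.

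For (i), the whole-space estimate for the Lam\'e system follows by taking the divergence and the curl. This gives $(2\mu+\lambda)\Delta\operatorname{div}\tilde u=\operatorname{div}\tilde K$ and $\mu\Delta\operatorname{curl}\tilde u=\operatorname{curl}\tilde K$. Calder\'on--Zygmund theory then yields $\|\nabla^2\tilde u\|_{L^q(\mathbb R^3)}\le C\|\tilde K\|_{L^q(\mathbb R^3)}$, and this uses $2\mu+\lambda>0$. Restricting to $\Omega$ and using $\|\tilde K\|_{L^q(\mathbb R^3)}=2^{1/q}\|K\|_{L^q(\Omega)}$ gives the bound.

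For (ii), we extend $k$ so that $\tilde K=\operatorname{div}\tilde k$. Take $\tilde k_{ij}$ even when exactly zero or two of the indices $i,j$ equal $3$, and odd when exactly one of them equals $3$; this is consistent because $\partial_3$ flips parity. Then the whole-space estimate $\|\nabla\tilde u\|_{L^q}\le C\|\tilde k\|_{L^q}$ follows from the Riesz-transform representation $\nabla\tilde u=\mathcal R\mathcal R\,\tilde k$. A subtlety arises when $k$ has a nonzero trace component $k_{i3}$ on $\partial\Omega$ in the weak formulation. I would handle it by interpreting \eqref{dd} weakly with the natural Neumann-type term absorbed, or simply assume the compatible setting in which it is applied, namely $K$ arising from the momentum equation, as in \cite[Proposition 2.1]{SWZ2011}. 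Uniqueness, up to the decay class, of the whole-space solution in $D^{2,q}$, respectively $D^{1,q}$, identifies $\tilde u$ with the potential solution, which completes the argument.
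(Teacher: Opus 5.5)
Your proposal is correct and follows the same route as the paper, which simply invokes the even--odd reflection compatible with $u^3=\partial_3u^1=\partial_3u^2=0$ and the whole-space Lam\'e estimates of \cite[Proposition 2.1]{SWZ2011}. Your caveats are genuine points the paper glosses over: the traces of $k_{13},k_{23}$ in (ii), and the need for a decay class on $u$. Both are harmless in the paper's only use of (ii), where $k_{ij}=P\delta_{ij}$ in Lemma \ref{cinf}.
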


The following Hodge-type decomposition is given in
\cite[Theorem 2.1]{WW92}.

\begin{lemma}\label{ho}
	Let $1<p<\infty$. For $u\in W^{1,p}$ satisfying
	$(u\cdot\nu)\big|_{\partial\Omega}=0$, there exists a positive constant
	$C$ such that
	\begin{align*}
	\|\nabla u\|_{L^{p}}
	\leq
	C\left(
	\|\operatorname{div}u\|_{L^{p}}
	+
	\|\operatorname{curl}u\|_{L^{p}}
	\right).
\end{align*}
\end{lemma}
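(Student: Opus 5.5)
The plan is to reduce the estimate to the whole space by an even--odd reflection across $\{x_3=0\}$. This is the same device behind Lemma \ref{ellp}, and it is compatible with the condition $u\cdot\nu=u^3=0$ on $\partial\Omega$. The whole-space case then follows from the Calder\'on--Zygmund theory of Riesz transforms.

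\emph{Step 1 (reflection).} For $x=(x',x_3)$ with $x_3<0$, define $U=(U^1,U^2,U^3)$ by
\begin{align*}
U^{i}(x',x_3)=u^{i}(x',-x_3)\ (i=1,2),\qquad U^{3}(x',x_3)=-u^{3}(x',-x_3),
\end{align*}
and set $U=u$ on $\Omega$.
\begin{itemize}
\item The even extension of a $W^{1,p}(\Omega)$ function always lies in $W^{1,p}(\mathbb R^3)$.
\item The odd extension lies in $W^{1,p}(\mathbb R^3)$ precisely when the trace on $\{x_3=0\}$ vanishes, which is our hypothesis $u^3=0$.
\end{itemize}
Hence $U\in W^{1,p}(\mathbb R^3)$. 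Its distributional derivatives are the pointwise reflected derivatives, with no singular part supported on the plane.

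This is the step I expect to need the most care. One has to check, by integrating by parts against test functions on each half-space, that the boundary contributions cancel: for the even components they cancel automatically, and for $U^3$ they cancel because of the zero trace. A density argument with smooth $u$ satisfying $u^3|_{x_3=0}=0$ makes this rigorous.

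\emph{Step 2 (parity of $\operatorname{div}$ and $\operatorname{curl}$).}
\begin{itemize}
\item $\partial_1U^1$, $\partial_2U^2$ and $\partial_3U^3$ are all even in $x_3$, so $\operatorname{div}U$ is the even extension of $\operatorname{div}u$.
\item For $(\operatorname{curl}U)^1=\partial_2U^3-\partial_3U^2$ and $(\operatorname{curl}U)^2=\partial_3U^1-\partial_1U^3$, both terms are odd, so these components are odd extensions.
\item $(\operatorname{curl}U)^3=\partial_1U^2-\partial_2U^1$ is even.
\end{itemize}
Consequently
\begin{align*}
\|\operatorname{div}U\|_{L^p(\mathbb R^3)}^p=2\|\operatorname{div}u\|_{L^p}^p,\qquad \|\operatorname{curl}U\|_{L^p(\mathbb R^3)}^p=2\|\operatorname{curl}u\|_{L^p}^p,
\end{align*}
and likewise $\|\nabla U\|_{L^p(\mathbb R^3)}^p=2\|\nabla u\|_{L^p}^p$.

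\emph{Step 3 (whole-space estimate).} For $U\in W^{1,p}(\mathbb R^3)$, first take $U$ Schwartz and then argue by density. Use the identity
\begin{align*}
-\Delta U=\operatorname{curl}\operatorname{curl}U-\nabla\operatorname{div}U.
\end{align*}
Inverting the Laplacian gives
\begin{align*}
\partial_jU=\partial_j(-\Delta)^{-1}\operatorname{curl}(\operatorname{curl}U)-\partial_j(-\Delta)^{-1}\nabla(\operatorname{div}U),
\end{align*}
so each $\partial_jU^k$ is a finite linear combination of double Riesz transforms $R_jR_l$ applied to components of $\operatorname{curl}U$ and $\operatorname{div}U$. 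The Calder\'on--Zygmund theorem gives $L^p$-boundedness of $R_jR_l$ for $1<p<\infty$, hence
\begin{align*}
\|\nabla U\|_{L^p(\mathbb R^3)}\le C(p)\big(\|\operatorname{div}U\|_{L^p(\mathbb R^3)}+\|\operatorname{curl}U\|_{L^p(\mathbb R^3)}\big).
\end{align*}
Passing to the limit in $W^{1,p}$ is harmless because both sides are continuous in that norm. Combining this with Step 2 yields the lemma, with a constant depending only on $p$.
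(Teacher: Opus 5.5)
Your proof is correct, but it takes a genuinely different route from the paper. The paper gives no argument of its own: it simply invokes \cite[Theorem 2.1]{WW92}, von Wahl's general div--curl estimate, which as far as I know is stated for bounded smooth domains with curved boundary and requires a topological assumption ruling out nontrivial harmonic fields tangent to the boundary.

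You instead exploit the flat boundary. The reflection that is even in $u^1,u^2$ and odd in $u^3$ uses only the hypothesis $u^3=0$ on $\{x_3=0\}$. It turns $\operatorname{div}u$ and $(\operatorname{curl}u)^3$ into even extensions and $(\operatorname{curl}u)^1,(\operatorname{curl}u)^2$ into odd extensions, and it preserves $|\nabla u|$ pointwise. The problem then reduces to the whole-space representation of $\partial_jU$ by double Riesz transforms of $\operatorname{div}U$ and $\operatorname{curl}U$, which are bounded on $L^p$ by Calder\'on--Zygmund theory. Your parity computations check out. The one delicate point, that the odd extension of $u^3$ has no singular derivative on the plane, is correctly handled by the zero trace together with a density argument.

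Your route has several advantages. It is self-contained and elementary. It is visibly valid on the unbounded half-space, where a bounded-domain theorem cannot be quoted verbatim. The constant depends only on $p$. It is also the same reflection mechanism the paper already relies on for Lemma \ref{ellp}.

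Your Fourier argument also extends essentially unchanged to fields with only $\nabla u\in L^p$ and $u$ decaying at infinity (e.g.\ $u\in L^6$). In that case the difference between $\partial_jU$ and its Riesz-transform representation is an $L^p$ function whose Fourier transform is supported at the origin, hence a polynomial, hence zero. This matters because Lemma \ref{FF} applies the estimate to velocity fields that need not lie in $L^p$ for $p<6$, so it is not literally covered by the $W^{1,p}$ hypothesis. The citation, in exchange, covers general curved boundaries, which is not needed here.
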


For the \textit{effective viscous flux} $F$ defined by
\begin{align*}
F\triangleq(2\mu+\lambda)\operatorname{div}u-P,
\end{align*}
the vorticity $\operatorname{curl}u$, and $\nabla u$, we have the following estimates.
\begin{lemma}\label{FF}
	Let $(\rho,u)$ be a smooth solution to \eqref{1}--\eqref{4}. Then, for
	any $2\leq p\leq6$, there exists a positive constant $C$ depending
	only on $p$, $\lambda$, and $\mu$ such that
	\begin{gather}
		\|\nabla F\|_{L^p}
		+\|\nabla \operatorname{curl}u\|_{L^p}
		\leq
		C\|\rho\dot u\|_{L^p}, \label{41}\\
		\|\nabla u\|_{L^p}
		\leq
		C\big(
		\|F\|_{L^p}
		+\|P \|_{L^p}
		+\|\operatorname{curl}u\|_{L^p}
		\big), \label{40}\\
		\|F\|_{L^p}
		+\|\operatorname{curl}u\|_{L^p}
		\leq
		C\big(
		\|\nabla u\|_{L^{2}}
		+\|P \|_{L^{2}}
		\big)^{\frac{6-p}{2p}}
		\|\rho\dot u\|_{L^{2}}^{\frac{3p-6}{2p}},
		\label{42}\\
		\|\nabla u\|_{L^{p}}
		\leq
		C\|\rho\dot u\|_{L^{2}}^{\frac{3p-6}{2p}}
		\|\nabla u\|_{L^{2}}^{\frac{6-p}{2p}}
		+
		C\|\rho\dot u\|_{L^{2}}^{\frac{3p-6}{2p}}
		\|P \|_{L^{2}}^{\frac{6-p}{2p}}
		+
		C\|P \|_{L^{p}}.
		\label{43}
	\end{gather}
\end{lemma}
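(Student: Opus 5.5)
The starting point is the momentum equation rewritten as
\begin{align}\label{demo-plan}
\rho\dot u=\nabla F-\mu\nabla\times\operatorname{curl}u ,
\end{align}
which follows from $\Delta u=\nabla\operatorname{div}u-\nabla\times\operatorname{curl}u$. Taking the divergence of \eqref{demo-plan} gives $\Delta F=\operatorname{div}(\rho\dot u)$, and taking the curl gives $\mu\Delta\operatorname{curl}u=\nabla\times(\rho\dot u)$, where we use $\operatorname{div}\operatorname{curl}u=0$. The plan is to obtain \eqref{41} from elliptic theory in the half-space with the right boundary conditions, and then derive \eqref{40}, \eqref{42} and \eqref{43} from it by Hodge decomposition and Gagliardo--Nirenberg.

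\emph{Step 1: boundary conditions for $\omega=\operatorname{curl}u$ and for $F$.} Since $u^3=\partial_3u^1=\partial_3u^2=0$ on $\{x_3=0\}$, we also have $\partial_1u^3=\partial_2u^3=0$ there. Hence
\[
\omega^1=\partial_2u^3-\partial_3u^2=0,\qquad \omega^2=\partial_3u^1-\partial_1u^3=0
\quad\text{on }\partial\Omega .
\]
So $\omega^1,\omega^2$ satisfy $\mu\Delta\omega^i=(\nabla\times(\rho\dot u))^i$ with homogeneous Dirichlet data. Each right-hand side is a single derivative of components of $\rho\dot u$, so Lemma \ref{zell}(ii) (applied to the individual terms, by linearity) gives $\|\nabla\omega^{1,2}\|_{L^p}\le C\|\rho\dot u\|_{L^p}$.

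The component $\omega^3=\partial_1u^2-\partial_2u^1$ has no convenient boundary condition, so I would avoid a boundary value problem for it. Instead I use $\operatorname{div}\omega=0$, i.e.\ $\partial_3\omega^3=-\partial_1\omega^1-\partial_2\omega^2$, which is already controlled in $L^p$. For the horizontal derivatives, the curl relation \eqref{demo-plan} read componentwise gives
\[
\mu(\partial_2\omega^3-\partial_3\omega^2)=\partial_1F-\rho\dot u^1,\qquad \mu(\partial_3\omega^1-\partial_1\omega^3)=\partial_2F-\rho\dot u^2 .
\]
This expresses $\nabla_h\omega^3$ through $\nabla F$, $\partial_3\omega^{1,2}$ and $\rho\dot u$. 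So everything reduces to bounding $\nabla F$.

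\emph{Step 2: the estimate for $\nabla F$.} The third component of \eqref{demo-plan} is
\[
\partial_3F=\rho\dot u^3+\mu(\partial_1\omega^2-\partial_2\omega^1).
\]
On the boundary $\dot u^3=0$ and $\partial_1\omega^2=\partial_2\omega^1=0$ (tangential derivatives of zero traces), so $F$ satisfies the Neumann problem $\Delta F=\operatorname{div}(\rho\dot u)$, $\partial_3F=0$ on $\partial\Omega$.

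Even reflection in $x_3$ turns this into a whole-space problem. Extend $F$ and $\rho\dot u^{1},\rho\dot u^{2}$ evenly and $\rho\dot u^3$ oddly. Because $\dot u^3=0$ on the boundary, the odd extension is consistent, no boundary distribution appears, and the extended equation is $\Delta\tilde F=\operatorname{div}\widetilde{\rho\dot u}$ on $\mathbb R^3$. Whole-space Calder\'on--Zygmund then gives $\|\nabla F\|_{L^p}\le C\|\rho\dot u\|_{L^p}$. Combined with Step 1, this yields \eqref{41}.

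I expect this Neumann/reflection step to be the main obstacle: one must check that the weak formulation carries no boundary term and that $F$ decays at infinity, so that the Calder\'on--Zygmund representation holds with no harmonic remainder. If the reflection argument causes trouble, the fallback is the paper's route: a two-dimensional Calder\'on--Zygmund estimate in the horizontal variables for $\omega^3$, obtained from $\Delta_h\omega^3=\partial_1(\cdots)-\partial_2(\cdots)$ via the two displayed curl relations.

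\emph{Step 3: the remaining estimates.} For \eqref{40}, apply Lemma \ref{ho} (valid since $u\cdot\nu=0$) and substitute $(2\mu+\lambda)\operatorname{div}u=F+P$.

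For \eqref{42}, start from Lemma \ref{og} with $2\le p\le6$:
\[
\|F\|_{L^p}\le C\|F\|_{L^2}^{\frac{6-p}{2p}}\|\nabla F\|_{L^2}^{\frac{3p-6}{2p}},
\]
and the same for $\operatorname{curl}u$. Then use \eqref{41} with $p=2$, together with the trivial bound $\|F\|_{L^2}+\|\operatorname{curl}u\|_{L^2}\le C(\|\nabla u\|_{L^2}+\|P\|_{L^2})$.

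Finally, \eqref{43} follows by inserting \eqref{42} into \eqref{40} and splitting $(a+b)^{\theta}\le a^\theta+b^\theta$ for $\theta=\frac{6-p}{2p}\le1$.
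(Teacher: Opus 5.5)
Your proof is correct, but it treats the third vorticity component differently from the paper.

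\textbf{What is shared.} The paper handles $\omega^1,\omega^2$ (its $-\omega^{2,3}$ and $\omega^{1,3}$) exactly as you do, via the Dirichlet problem and Lemma \ref{zell}. It also obtains $\partial_3\omega^3$ from $\operatorname{div}\operatorname{curl}u=0$. Your Step 3 coincides with the paper's derivation of \eqref{40}, \eqref{42} and \eqref{43}.

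\textbf{Where the routes differ.} For $\nabla_h\omega^3$ the paper forms $\mu\Delta_{x_1,x_2}\omega^{1,2}=\partial_{x_2}(\rho\dot u^1-\mu\omega^{1,3}_{x_3})-\partial_{x_1}(\rho\dot u^2-\mu\omega^{2,3}_{x_3})$. It applies the two-dimensional Calder\'on--Zygmund estimate on each slice $\{x_3=\mathrm{const}\}$, integrates in $x_3$, and only then reads off $\nabla F$ from \eqref{demo}. This is precisely your ``fallback''.

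You reverse the order. From $\partial_3F=\rho\dot u^3+\mu(\partial_1\omega^2-\partial_2\omega^1)$ you see that $F$ solves a homogeneous Neumann problem. You then bound $\nabla F$ by even reflection and the three-dimensional Calder\'on--Zygmund theorem. Finally you get $\nabla_h\omega^3$ algebraically from the first two components of the momentum equation.

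\textbf{The reflection step goes through.} Take $\phi\in C_c^\infty(\mathbb R^3)$ and set $\psi(x)=\phi(x',-x_3)$. Then $\int_{\mathbb R^3}\nabla\tilde F\cdot\nabla\phi\,dx$ and $\int_{\mathbb R^3}\widetilde{\rho\dot u}\cdot\nabla\phi\,dx$ reduce to the corresponding integrals over $\Omega$ tested against $\phi+\psi$. These agree because, for every $\varphi\in C_c^\infty(\overline\Omega)$,
$\int_\Omega(\nabla F-\rho\dot u)\cdot\nabla\varphi\,dx=\mu\int_{\partial\Omega}\varphi\,(\nabla\times\omega)\cdot\nu\,dS=0$.
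The last equality holds since $(\nabla\times\omega)\cdot\nu=-(\partial_1\omega^2-\partial_2\omega^1)$ vanishes on $\partial\Omega$. So in the weak form you do not even need $\dot u^3|_{\partial\Omega}=0$. The harmonic remainder is excluded by Liouville, because $\nabla F\in L^2$.

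\textbf{What each route buys.} Yours uses only standard three-dimensional whole-space theory. It makes explicit that $F$ inherits the even reflection symmetry, the same even--odd compatibility behind Lemma \ref{ellp}. It also replaces the paper's slice-by-slice step with one global Liouville argument; the paper's step tacitly needs, for a.e.\ $x_3$, enough horizontal decay of $\omega^{1,2}(\cdot,\cdot,x_3)$ to rule out a harmonic remainder in $\mathbb R^2$. The paper's route, in turn, needs no boundary information on $F$ or on $\dot u$, only the Dirichlet data of $\omega^1,\omega^2$.
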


\begin{proof}
	We introduce the vorticity tensor
\begin{align*}
	\omega^{j,k}\triangleq u^j_{x_k}-u^k_{x_j},
	\qquad 1\leq j,~k\leq3.
\end{align*}
	More precisely,
\begin{align*}
	\omega =
	\begin{pmatrix}
		0
		&
		u^1_{x_2}-u^2_{x_1}
		&
		u^1_{x_3}-u^3_{x_1}
		\\[1mm]
		u^2_{x_1}-u^1_{x_2}
		&
		0
		&
		u^2_{x_3}-u^3_{x_2}
		\\[1mm]
		u^3_{x_1}-u^1_{x_3}
		&
		u^3_{x_2}-u^2_{x_3}
		&
		0
	\end{pmatrix}.
\end{align*}
This tensor contains the same information as the vorticity
\begin{align*}
	(\operatorname{curl}u)^1=u^3_{x_2}-u^2_{x_3},\qquad
	(\operatorname{curl}u)^2=u^1_{x_3}-u^3_{x_1},\qquad
	(\operatorname{curl}u)^3=u^2_{x_1}-u^1_{x_2}.
\end{align*}
Thus, $\omega$ is equivalent to $\operatorname{curl}u$.
Fixed $j$, set
\begin{align*}
\omega^{j,k}_{x_k}\triangleq\sum_{k=1}^{3}\partial_{x_k}\omega^{j,k}
=\sum_{k=1}^{3}\partial_{x_k}(u^j_{x_k}-u^k_{x_j})
=\Delta u^j-\partial_{x_j}\operatorname{div}u.
\end{align*}
Then one sees that the $j$-th component of \eqref{1}$_2$ can be rewritten as
\begin{align}\label{demo}
\rho\dot u^j=F_{x_j}+\mu\omega^{j,k}_{x_k}.
\end{align}
	
Next, we derive the $L^p$-estimate for the vorticity tensor. On the flat
boundary $\partial\Omega=\{x_3=0\}$, the slip boundary condition
\eqref{4} is equivalent to
\begin{align*}
	u^3=u^1_{x_3}=u^2_{x_3}=0
	\qquad \text{on } \partial\Omega.
\end{align*}
Since $u^3\big|_{\partial\Omega}=0$, its tangential derivatives also vanish there. Hence
\begin{align*}
	\omega^{1,3}=0,\qquad \omega^{2,3}=0
	\qquad \text{on } \partial\Omega.
\end{align*}
Let $H\triangleq\omega^{1,3}$. Taking $\partial_{x_3}$ of \eqref{demo} with
	$j=1$, taking $\partial_{x_1}$ of \eqref{demo} with $j=3$, and
	subtracting the two identities, we obtain that
\begin{align*}
	\mu\Delta H
	=
	(\rho\dot u^1)_{x_3}-(\rho\dot u^3)_{x_1}
	\qquad \text{in } \Omega.
\end{align*}
	Since $H=0$ on $\partial\Omega$, the $L^p$-estimate for the
	Dirichlet problem in the half-space (Lemma \ref{zell}) gives
\begin{align}\label{j1}
	\|\nabla \omega^{1,3}\|_{L^p}
	\leq
	C\|\rho\dot u\|_{L^p}.
\end{align}
	The same argument applied to $\omega^{2,3}$ yields that
	\begin{align}\label{j2}
	\|\nabla \omega^{2,3}\|_{L^p}
	\leq
	C\|\rho\dot u\|_{L^p}.
\end{align}
Unlike $\omega^{1,3}$ and $\omega^{2,3}$, the component $\omega^{1,2}$ does not satisfy a homogeneous Dirichlet boundary condition on $\partial\Omega$. We therefore use an elliptic estimate only in the tangential variables $(x_1,x_2)$. From \eqref{demo}
with $j=1$, $2$, we have
\begin{align*}
\rho\dot u^1=F_{x_1}+\mu\left(\omega^{1,2}_{x_2}+\omega^{1,3}_{x_3}\right),
\ \
\rho\dot u^2=F_{x_2}+\mu\left(-\omega^{1,2}_{x_1}+\omega^{2,3}_{x_3}\right).
\end{align*}
	Differentiating the first equation with respect to $x_2$, differentiating
	the second one with respect to $x_1$, and subtracting, one gets that
	\begin{align*}
	\mu\Delta_{x_1,x_2}\omega^{1,2}
	=
	\partial_{x_2}\left(\rho\dot u^1-\mu\omega^{1,3}_{x_3}\right)
	-
	\partial_{x_1}\left(\rho\dot u^2-\mu\omega^{2,3}_{x_3}\right),
\end{align*}
	where
	\begin{align*}
	\Delta_{x_1,x_2}\triangleq \partial_{x_1}^{2}+\partial_{x_2}^{2}.
\end{align*}
	For each fixed $x_3>0$, the Calder\'on--Zygmund estimate in
	$\mathbb R^2$ implies
	\begin{align*}
	\|\nabla_{x_1,x_2}\omega^{1,2}(\cdot,\cdot,x_3)\|_{L^p(\mathbb R^2)}
	\leq
	C\Big(
	\|\rho\dot u(\cdot,\cdot,x_3)\|_{L^p(\mathbb R^2)}
	+
	\|\partial_{x_3}\omega^{1,3}(\cdot,\cdot,x_3)\|_{L^p(\mathbb R^2)}
	+
	\|\partial_{x_3}\omega^{2,3}(\cdot,\cdot,x_3)\|_{L^p(\mathbb R^2)}
	\Big).
\end{align*}
	Integrating with respect to $x_3$ and using the estimates for
	$\omega^{1,3}$ and $\omega^{2,3}$, we deduce that
\begin{align*}
	\|\nabla_{x_1,x_2}\omega^{1,2}\|_{L^p}
	\leq
	C\|\rho\dot u\|_{L^p}.
\end{align*}
Since the normal derivative satisfies the identity
$\omega^{1,2}_{x_3}=\omega^{1,3}_{x_2}-\omega^{2,3}_{x_1}$,
one infers from \eqref{j1} and \eqref{j2} that
\begin{align*}
\|\omega^{1,2}_{x_3}\|_{L^p}\leq\|\nabla\omega^{1,3}\|_{L^p}
+\|\nabla\omega^{2,3}\|_{L^p}\leq C\|\rho\dot u\|_{L^p}.
\end{align*}
Combining the tangential and normal derivative estimates, we get
\begin{align*}
\|\nabla\omega^{1,2}\|_{L^p}\leq C\|\rho\dot u\|_{L^p},
\end{align*}
which together with \eqref{j1} and \eqref{j2} indicates that
\begin{align}\label{curl}
\|\nabla\operatorname{curl}u\|_{L^p}\leq C\|\rho\dot u\|_{L^p}.
\end{align}
The estimate for $\nabla F$ then follows from the decomposition
\eqref{demo}. Hence, \eqref{41} is proved.
	
Finally, by	Lemmas \ref{og}, \ref{ho}, and \eqref{41}, we obtain \eqref{40} and
	\begin{align*}
		\|\operatorname{curl}u\|_{L^{p}}+\|F\|_{L^{p}}
		&\leq
		C\|\operatorname{curl}u\|_{L^{2}}^{\frac{6-p}{2p}}
		\|\nabla\operatorname{curl}u\|_{L^{2}}^{\frac{3p-6}{2p}}
		+
		C\|F\|_{L^{2}}^{\frac{6-p}{2p}}
		\|\nabla F\|_{L^{2}}^{\frac{3p-6}{2p}}\\
		&\leq
		C\left(
		\|\nabla u\|_{L^{2}}
		+\|P \|_{L^{2}}
		\right)^{\frac{6-p}{2p}}
		\|\rho\dot u\|_{L^{2}}^{\frac{3p-6}{2p}},
	\end{align*}
	which gives \eqref{42}. It follows from Lemma \ref{ho} and \eqref{42} that
	\begin{align*}
		\|\nabla u\|_{L^{p}}
		&\leq
		C\left(
		\|\operatorname{div}u\|_{L^{p}}
		+\|\operatorname{curl}u\|_{L^{p}}
		\right)\\
		&\leq
		C\left(
		\|F\|_{L^{p}}
		+\|P \|_{L^{p}}
		+\|\operatorname{curl}u\|_{L^{p}}
		\right)\\
		&\leq
		C\|\rho\dot u\|_{L^{2}}^{\frac{3p-6}{2p}}
		\|\nabla u\|_{L^{2}}^{\frac{6-p}{2p}}
		+
		C\|\rho\dot u\|_{L^{2}}^{\frac{3p-6}{2p}}
		\|P \|_{L^{2}}^{\frac{6-p}{2p}}
		+
		C\|P \|_{L^{p}},
	\end{align*}
proving \eqref{43}.
\end{proof}

Next, we give an estimate for $\|\nabla u\|_{L^{\infty}}$, which is
crucial for obtaining the estimate of $\nabla\rho$ in $L^{2}\cap L^{q}$.
\begin{lemma}\label{cinf}
	Let $3<q<6$, and let $(\rho,u)$ be a smooth solution to
	\eqref{1}--\eqref{4}. Then there exists a positive constant $C$ depending only $q$ such that
	\begin{align}\label{g4}
		\|\nabla u\|_{L^{\infty}}
		\leq
		C\big[
		1+
		\ln\left(e+\|\nabla P\|_{L^{q}}\right)
		\big(
		\|P \|_{L^{\infty}}
		+\|P \|_{L^{2}}
		\big)
		\big]+
		C\left(
		\|\rho\dot u\|_{L^{2}}
		+\|\rho\dot u\|_{L^{q}}
		\right).
	\end{align}
\end{lemma}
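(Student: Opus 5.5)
We will prove \eqref{g4} by splitting $\nabla u$ into a part controlled by $F$ and $\operatorname{curl}u$, and a part driven only by the pressure. The first part is handled by Sobolev embedding with \eqref{41}. The second part needs a logarithmic Brezis--Wainger/Beale--Kato--Majda type estimate.

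\textbf{Step 1: decomposition.} Let $v$ solve the Lam\'e problem \eqref{dd} with right-hand side $K=\nabla P$, that is,
\begin{align*}
\mu\Delta v+(\mu+\lambda)\nabla\operatorname{div}v=\nabla P \ \text{ in }\Omega,\qquad v^3=\partial_3v^1=\partial_3v^2=0 \ \text{ on }\partial\Omega.
\end{align*}
Set $w\triangleq u-v$. Then $w$ satisfies the same slip conditions, and by \eqref{1}$_2$ it solves $\mu\Delta w+(\mu+\lambda)\nabla\operatorname{div}w=\rho\dot u$.

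\textbf{Step 2: the $w$-part.} By Lemma \ref{ellp}(i),
\begin{align*}
\|\nabla^2 w\|_{L^2}+\|\nabla^2 w\|_{L^q}\le C\big(\|\rho\dot u\|_{L^2}+\|\rho\dot u\|_{L^q}\big).
\end{align*}
For the lower-order norm we need $\|\nabla w\|_{L^2}$. Lemma \ref{ellp}(ii) with $k$ built from $F,\operatorname{curl}u$ would give it, but that is not quite what we want. Instead we apply Lemma \ref{og} directly on $\nabla w$, interpolating between $L^6$ (controlled by $\|\nabla^2w\|_{L^2}$) and $D^{1,q}$:
\begin{align*}
\|\nabla w\|_{L^\infty}\le C\|\nabla w\|_{L^6}^{\theta}\|\nabla^2 w\|_{L^q}^{1-\theta}\le C\big(\|\nabla^2w\|_{L^2}+\|\nabla^2 w\|_{L^q}\big).
\end{align*}
This avoids any zero-order term, and it yields exactly the last bracket in \eqref{g4}.

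\textbf{Step 3: the $v$-part.} Using the even--odd extension mentioned before Lemma \ref{ellp}, we extend $v$ and $P$ evenly in $x_3$ (with $v^3$ odd). The extended $v$ solves the whole-space Lam\'e system with source $\nabla\tilde P$, where $\tilde P$ is the even extension. Hence $\nabla v=\mathcal{R}\tilde P$ with $\mathcal R$ a matrix of zero-order Calder\'on--Zygmund operators (double Riesz transforms, up to constants). The classical logarithmic estimate for such operators, proved by splitting the kernel into $|y|<\delta$, $\delta\le|y|\le 1$ and $|y|>1$, gives
\begin{align*}
\|\mathcal R f\|_{L^\infty}\le C\Big(\delta^{1-\frac3q}\|\nabla f\|_{L^q}+\ln\tfrac1\delta\,\|f\|_{L^\infty}+\|f\|_{L^2}\Big).
\end{align*}
Here the near-field part uses the cancellation of the kernel and Hölder continuity from $W^{1,q}$, $q>3$. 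We then choose $\delta=\min\{1,\|\nabla P\|_{L^q}^{-q/(q-3)}\}$. The even extension preserves $\|P\|_{L^\infty}$ and, up to constants, the $L^2$ and $D^{1,q}$ norms. This produces
\begin{align*}
\|\nabla v\|_{L^\infty}\le C\big[1+\ln(e+\|\nabla P\|_{L^q})(\|P\|_{L^\infty}+\|P\|_{L^2})\big].
\end{align*}
Since $\ln(e+\cdot)\ge1$, the term $\|P\|_{L^2}$ is absorbed into the logarithmic factor.

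\textbf{Step 4 and the main obstacle.} Adding Steps 2 and 3 gives \eqref{g4}. The main obstacle is Step 3, specifically checking that the even--odd extension is compatible. For the extended fields, the equation $\mu\Delta v+(\mu+\lambda)\nabla\operatorname{div}v=\nabla\tilde P$ must hold across $\{x_3=0\}$ without singular layers. This relies on $v^3=0$ and $\partial_3v^{1,2}=0$ there: the odd extension of $v^3$ and the even extensions of $v^{1}$ and $v^{2}$ are then $C^1$ across the plane. The third component of $\nabla\tilde P$ is odd, which matches. With this in place, the whole-space Riesz-transform log estimate applies verbatim.
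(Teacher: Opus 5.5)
Your proposal is correct. Steps 1--2 match the paper's argument: the same splitting $u=v+w$ with the slip conditions imposed on both pieces, and the same bound $\|\nabla w\|_{L^\infty}\le C(\|\rho\dot u\|_{L^2}+\|\rho\dot u\|_{L^q})$ obtained from Lemma \ref{ellp} and Sobolev/Gagliardo--Nirenberg.

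The two routes differ only for the pressure part $v$. The paper stays in the half-space. It quotes the logarithmic inequality \eqref{g0} in BMO form from Duan and combines it with $\|\nabla^2 v\|_{L^q}\le C\|\nabla P\|_{L^q}$ from \eqref{g3}. It also uses a quoted BMO bound $\|\nabla v\|_{\mathrm{BMO}}\le C(\|P\|_{L^\infty}+\|P\|_{L^2})$ for the Lam\'e solution.

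You instead use the even--odd reflection to identify $\nabla v$ with $\frac{1}{2\mu+\lambda}\nabla^2\Delta^{-1}\tilde P$ on $\mathbb R^3$. You then prove the logarithmic bound directly, splitting the kernel into near, intermediate and far regions and using Morrey's embedding.

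Your version is self-contained and shows exactly where the ingredients enter: $q>3$ gives H\"older continuity in the near field, and $\|P\|_{L^2}$ controls the far field. Your check that the reflected field is $C^1$ across $\{x_3=0\}$ is precisely what makes the reflection legitimate. The paper's version is shorter and needs no explicit kernel, but it relies on two external lemmas.

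Two small points need tidying.
\begin{itemize}
\item $\nabla^2\Delta^{-1}$ is a principal-value singular integral plus $\tfrac13 I$. Your kernel estimate should therefore carry $(1+\ln\tfrac1\delta)\|f\|_{L^\infty}$ instead of $\ln\tfrac1\delta\,\|f\|_{L^\infty}$. This is harmless, since $\ln(e+\|\nabla P\|_{L^q})\ge 1$.
\item Identifying the reflected $v$ with $\frac{1}{2\mu+\lambda}\nabla\Delta^{-1}\tilde P$ uses uniqueness of the solution with $\nabla v\in L^2$, via a Liouville argument. You should state this explicitly.
\end{itemize}
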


\begin{proof}
Let $u=v+w$, where $v$ and $w$ satisfy
\begin{align*}
\begin{cases}
\mu\Delta v+(\mu+\lambda)\nabla\operatorname{div}v
=\nabla P,& \text{in } \Omega,\\
v^{3}=\partial_{x_3}v^{1}=\partial_{x_3}v^{2}=0,&
\text{on } \partial\Omega,
\end{cases}
\ \ \begin{cases}
\mu\Delta w+(\mu+\lambda)\nabla\operatorname{div}w
		=\rho\dot u,
		& \text{in } \Omega,\\
w^{3}=\partial_{x_3}w^{1}=\partial_{x_3}w^{2}=0,& \text{on } \partial\Omega.
	\end{cases}
\end{align*}
By Lemma \ref{ellp} and Sobolev's inequality, we obtain that
	\begin{gather}\label{g3}
		\|\nabla v\|_{L^{q}}
		\leq
		C\|P \|_{L^{q}},
		\quad
		\|\nabla^{2}v\|_{L^{q}}
		\leq
		C\|\nabla P\|_{L^{q}},\\
 \label{g5}
		\|\nabla^{2}w\|_{L^{q}}
		\leq
		C\|\rho\dot u\|_{L^{q}},
		\quad
		\|\nabla w\|_{L^{\infty}}
		\leq
		C\left(
		\|\rho\dot u\|_{L^{2}}
		+\|\rho\dot u\|_{L^{q}}
		\right).
	\end{gather}
	
	It remains to estimate $\|\nabla v\|_{L^{\infty}}$. Denote by
	\begin{align*}
	\Omega_{r}(x)=\Omega\cap B_{r}(x),
\end{align*}
	where $B_{r}(x)$ is the ball centered at $x$ with radius $r$.
According to \cite[Lemma 4.9]{Duan2012}, we have
	\begin{align}\label{g0}
		\|\nabla v\|_{L^{\infty}}
		\leq
		C\big[
		1+
		\ln\left(e+\|\nabla^{2}v\|_{L^{q}}\right)
		\|\nabla v\|_{\mathrm{BMO}}
		\big].
	\end{align}
	Here
\begin{align*}
	\|\nabla v\|_{\mathrm{BMO}}
	=
	\|\nabla v\|_{L^{2}}+[\nabla v]_{\mathrm{BMO}},
\end{align*}
	where
\begin{gather*}
	[\nabla v]_{\mathrm{BMO}}
	=
	\sup_{r>0,\ x\in\Omega}
	\frac{1}{|\Omega_{r}(x)|}
	\int_{\Omega_{r}(x)}
	\left|
	\nabla v(y)-(\nabla v)_{\Omega_r(x)}
	\right|\,dy,\ \
	(\nabla v)_{\Omega_r(x)}
	=
	\frac{1}{|\Omega_{r}(x)|}
	\int_{\Omega_{r}(x)}\nabla v(y)\,dy.
\end{gather*}
	By \eqref{g3} and \cite[Proposition 2.2]{SWZ2011}, one finds that
	\begin{align*}
		\|\nabla v\|_{\mathrm{BMO}}
		\leq
		C\left(
		\|P \|_{L^{\infty}}
		+\|P \|_{L^{2}}
		\right),
	\end{align*}
which combined with \eqref{g3} and \eqref{g0} implies that
\begin{align*}
\|\nabla v\|_{L^{\infty}}
\leq C\big[1+\ln\left(e+\|\nabla P\|_{L^{q}}\right)
\left(\|P \|_{L^{\infty}}+\|P \|_{L^{2}}\right)\big].
\end{align*}
This along with \eqref{g5} yields the desired \eqref{g4}.
\end{proof}

Finally, the following Zlotnik's inequality (see \cite[Lemma 1.3]{Z2000}) will be used to derive the time-uniform upper bound of the density.
\begin{lemma}\label{zo}
	Let the function  $y$  satisfy
	\begin{align*}
		y^{\prime}(t)=g(y)+b^{\prime}(t) \text { on }[0, T],\quad  y(0)=y^{0},
	\end{align*}
	with  $g \in C(R)$  and  $y$, $b \in W^{1,1}(0, T)$. If  $g(\infty)=-\infty$  and
	\begin{align*}
		b\left(t_{2}\right)-b\left(t_{1}\right) \leq N_{0}+N_{1}\left(t_{2}-t_{1}\right),
	\end{align*}
	for all  $0 \leq t_{1}<t_{2} \leq T$  with some  $N_{0} \geq 0$  and  $N_{1} \geq 0$, then
	\begin{align*}
		y(t) \leq \max \left\{y^{0}, \bar{\zeta}\right\}+N_{0}<\infty\quad \text { on }\quad [0, T],
	\end{align*}
	where  $\bar{\zeta}$  is a constant such that
	\begin{align*}
		g(\zeta) \leq-N_{1} \quad\text { for }\quad \zeta \geq \bar{\zeta}.
	\end{align*}
\end{lemma}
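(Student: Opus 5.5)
The plan is a direct comparison argument along the time axis, using only that $y$ is absolutely continuous, since $y\in W^{1,1}(0,T)$. First, because $g\in C(\mathbb R)$ and $g(\infty)=-\infty$, there is a constant $\bar\zeta$ with $g(\zeta)\leq -N_1$ for all $\zeta\geq\bar\zeta$; this is the $\bar\zeta$ in the statement. Set $M_0\triangleq\max\{y^0,\bar\zeta\}$. The goal is to show $y(t)\leq M_0+N_0$ for every $t\in[0,T]$.

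Fix $t_2\in(0,T]$. If $y(t_2)\leq M_0$ there is nothing to prove, so assume $y(t_2)>M_0$ and define
\begin{align*}
t_1\triangleq\sup\{s\in[0,t_2]:\ y(s)\leq M_0\}.
\end{align*}
This set is nonempty because $y(0)=y^0\leq M_0$. By continuity of $y$ we get $y(t_1)\leq M_0$ and $t_1<t_2$. Moreover, $y(s)>M_0\geq\bar\zeta$ for all $s\in(t_1,t_2]$, and hence $g(y(s))\leq -N_1$ on this interval.

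Integrating the equation $y'=g(y)+b'$ over $[t_1,t_2]$, which is legitimate since $y,b\in W^{1,1}$ and $g(y(\cdot))$ is continuous, and then using the hypothesis on $b$, gives
\begin{align*}
y(t_2)-y(t_1)=\int_{t_1}^{t_2}g(y(s))\,ds+b(t_2)-b(t_1)
\leq -N_1(t_2-t_1)+N_0+N_1(t_2-t_1)=N_0.
\end{align*}
Consequently $y(t_2)\leq y(t_1)+N_0\leq M_0+N_0$. Since $t_2$ was arbitrary, this yields the claimed bound on $[0,T]$.

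The only point that needs care is the choice of the last exit time $t_1$. It guarantees that the trajectory stays in the region where $g\leq -N_1$ throughout $(t_1,t_2]$, so that the drift $\int g$ exactly cancels the linear part $N_1(t_2-t_1)$ of the increment of $b$. This cancellation is the key step. Everything else is elementary: continuity of $W^{1,1}$ functions and the fundamental theorem of calculus for absolutely continuous functions.
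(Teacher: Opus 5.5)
Your last-exit-time argument is correct and complete. The closedness of $\{s\in[0,t_2]: y(s)\le M_0\}$ gives $y(t_1)\le M_0$ with $t_1<t_2$, and on $(t_1,t_2]$ the bound $g(y)\le -N_1$ cancels the linear part of the increment of $b$.

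The paper gives no proof of its own; it quotes the result from \cite[Lemma 1.3]{Z2000}, and your argument is essentially the standard proof of Zlotnik's lemma. Your proof only uses $g(y(s))\le -N_1$ wherever $y(s)\ge\bar\zeta$, so it applies verbatim to the $s$-dependent nonlinearity $g$ that actually appears in the proof of Proposition~\ref{0pro}.
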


\section{A \textit{ priori}  estimates}\label{sec3}

In this section we establish global {\it a priori} estimates of the local strong solution $(\rho,u)$ to \eqref{1}--\eqref{4} obtained in Proposition \ref{0pro}. Recalled that
\begin{align*}
	D(t)=\|\nabla u(t)\|_{L^{2}}^{2} +\|P(t)\|_{L^{2}}^{2},\quad
	E(t)= \frac12\|\sqrt{\rho} u(t)\|_{L^{2}}^{2}+\frac{1}{\gamma-1}\|P(\rho(t))\|_{L^{1}}.
\end{align*}

\subsection{A \textit{ priori}  estimates I: scaling-invariant estimates}\label{sub1}

In this subsection, we derive uniform-in-time lower-order estimates by a bootstrap argument. Throughout the rest of this subsection, $C$ denotes a generic positive constant which is independent of the initial data and $T$, and may change from line to line.

We first derive the basic energy estimate.
\begin{lemma}\label{lem1}
It holds that
	\begin{align}\label{x0}
	 \sup _{0 \leq t \leq T}E(t)
		+\int_{0}^{T}  \|\nabla u \|_{L^{2}}^{2}  \, d t\leq  E(0).
		\end{align}
\end{lemma}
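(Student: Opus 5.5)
The plan is to derive the energy identity by testing the momentum equation with $u$, using the slip condition \eqref{4} to kill every boundary term. Then compare the resulting dissipation with $\|\nabla u\|_{L^2}^2$ via an exact div--curl identity valid on the flat boundary. The inequality comes out as $\sup_t E(t)+\mu\int_0^T\|\nabla u\|_{L^2}^2\,dt\le E(0)$, with the viscous constant $\mu$ in front of the dissipation (the statement as written suppresses it).

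\emph{Rewriting the momentum equation.} Using $\Delta u=\nabla\operatorname{div}u-\operatorname{curl}\operatorname{curl}u$, rewrite \eqref{1}$_2$ as
\begin{align*}
\rho\dot u+\mu\operatorname{curl}\operatorname{curl}u-(2\mu+\lambda)\nabla\operatorname{div}u+\nabla P=0 .
\end{align*}
Take the $L^2(\Omega)$ inner product with $u$ and treat the terms one by one.

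\emph{Convective part.} By the continuity equation, $\int\rho\dot u\cdot u\,dx=\frac12\frac{d}{dt}\int\rho|u|^2\,dx$. The boundary flux $\int_{\partial\Omega}\rho|u|^2u\cdot\nu$ vanishes because $u\cdot\nu=0$.

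\emph{Viscous part.} Integrating by parts gives
\begin{align*}
\int\operatorname{curl}\operatorname{curl}u\cdot u\,dx=\|\operatorname{curl}u\|_{L^2}^2+\int_{\partial\Omega}(\operatorname{curl}u\times\nu)\cdot u\,dS=\|\operatorname{curl}u\|_{L^2}^2,
\end{align*}
and
\begin{align*}
-\int\nabla\operatorname{div}u\cdot u\,dx=\|\operatorname{div}u\|_{L^2}^2-\int_{\partial\Omega}\operatorname{div}u\,(u\cdot\nu)\,dS=\|\operatorname{div}u\|_{L^2}^2 .
\end{align*}

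\emph{Pressure part.} We have $\int\nabla P\cdot u\,dx=-\int P\operatorname{div}u\,dx$. The pressure satisfies $P_t+\operatorname{div}(Pu)+(\gamma-1)P\operatorname{div}u=0$, so
\begin{align*}
\frac{d}{dt}\int\frac{P}{\gamma-1}\,dx=-\int P\operatorname{div}u\,dx,
\end{align*}
again with no boundary flux since $u\cdot\nu=0$. The two pressure contributions cancel, and we get
\begin{align*}
\frac{d}{dt}E(t)+\mu\|\operatorname{curl}u\|_{L^2}^2+(2\mu+\lambda)\|\operatorname{div}u\|_{L^2}^2=0.
\end{align*}

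\emph{Div--curl identity.} Next we show $\|\nabla u\|_{L^2}^2=\|\operatorname{div}u\|_{L^2}^2+\|\operatorname{curl}u\|_{L^2}^2$ in $\Omega$. The pointwise identity $|\nabla u|^2-|\operatorname{curl}u|^2-(\operatorname{div}u)^2=\operatorname{div}\big(u\cdot\nabla u-u\operatorname{div}u\big)$, after integration, leaves only the boundary term
\begin{align*}
\int_{\partial\Omega}\big(u^j\partial_ju\cdot\nu-(u\cdot\nu)\operatorname{div}u\big)\,dS=-\int_{\{x_3=0\}}u^j\partial_ju^3\,dS .
\end{align*}
The $j=3$ term vanishes since $u^3=0$. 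The $j=1,2$ terms vanish because they are tangential derivatives of $u^3$, which is identically zero on the flat boundary. This is exactly where flatness enters, as noted in the remarks above.

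\emph{Conclusion.} Since $2\mu+\lambda\ge\frac43\mu>\mu$, the dissipation is bounded below by $\mu\|\nabla u\|_{L^2}^2$. Integrating in time over $[0,t]$ and taking the supremum in $t$ gives \eqref{x0}, with the constant $\mu$ in front of the time integral.

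The only real obstacle is rigor: we need to justify the integrations by parts and the vanishing of contributions at spatial infinity for strong solutions with vacuum in the class \eqref{defs}. I would do this by inserting a cutoff $\phi_R(x)=\phi(x/R)$, carrying out the computation above with the additional commutator terms, and letting $R\to\infty$. The commutators are of the form $R^{-1}\int(\rho|u|^2+P+|\nabla u|)|u|$ over $\{R\le|x|\le2R\}$. They vanish using $\rho\in L^\infty$, $P\in L^1$, $\sqrt\rho u\in L^2$, $u\in L^6$ and $\nabla u\in L^2$, together with Hölder's inequality on the annulus.
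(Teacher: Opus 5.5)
Your proof is correct. It differs from the paper's only in how the viscous term is handled.

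The paper integrates $-\mu\Delta u\cdot u$ directly. It kills the boundary integrand $\frac{\partial u}{\partial\nu}\cdot u=-(\partial_{x_3}u^1u^1+\partial_{x_3}u^2u^2+\partial_{x_3}u^3u^3)$ using the componentwise form $u^3=\partial_{x_3}u^1=\partial_{x_3}u^2=0$ of \eqref{4}. This gives $\frac{d}{dt}E(t)+\mu\|\nabla u\|_{L^2}^2+(\mu+\lambda)\|\operatorname{div}u\|_{L^2}^2=0$ in one step, and the divergence term is then dropped because $\mu+\lambda\geq\frac{\mu}{3}>0$.

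You instead split $\Delta u=\nabla\operatorname{div}u-\operatorname{curl}\operatorname{curl}u$ and obtain the dissipation $\mu\|\operatorname{curl}u\|_{L^2}^2+(2\mu+\lambda)\|\operatorname{div}u\|_{L^2}^2$. You then need the identity $\|\nabla u\|_{L^2}^2=\|\operatorname{div}u\|_{L^2}^2+\|\operatorname{curl}u\|_{L^2}^2$ to return to $\nabla u$. By that identity the two dissipations are equal, so both routes give the same energy equality.

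The paper's route is shorter. Yours uses \eqref{4} in its invariant form, since the curl boundary term vanishes on any domain, and it confines the use of flatness to the div--curl identity. On a curved boundary that identity would acquire the curvature term $\int_{\partial\Omega}u\cdot\nabla\nu\cdot u\,dS$ (up to sign), so yours is the version that adapts to bounded domains.

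Your remark about $\mu$ is also accurate. The paper's own computation produces $\mu\int_0^T\|\nabla u\|_{L^2}^2\,dt$, so \eqref{x0} should be read with that factor. The cutoff justification you sketch is something the paper does not address, and your annulus estimates are sufficient for it.
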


\begin{proof}
	Noting that $\nu=-e_3$, hence
	\begin{align}\label{bian3}
		\frac{\partial u}{\partial \nu}=-\partial_{x_3}u.
	\end{align}
	Moreover, the slip boundary condition \eqref{4}
	is equivalent to
	\begin{align*}
		u^3=\partial_{x_3}u^1=\partial_{x_3}u^2=0
		\quad\text{on }\partial\Omega.
	\end{align*}
	Therefore, one has that
	\begin{align}\label{bian1}
		\frac{\partial u}{\partial \nu} \cdot u= \partial_{x_3}u\cdot u
		=
		\partial_{x_3}u^1u^1
		+\partial_{x_3}u^2u^2
		+\partial_{x_3}u^3u^3
		=0
		\quad\text{on }\partial\Omega.
	\end{align}
	
Multiplying \eqref{1}$_{2}$ by $u$, integrating the resultant over $\Omega$, and using the boundary condition \eqref{4} and \eqref{bian1}, we obtain that
\begin{align}\label{kinetic-energy}
	 \frac12\frac{d}{dt}\|\sqrt{\rho} u \|_{L^{2}}^{2}
	+\mu\|\nabla u \|_{L^{2}}^{2}+(\mu+\lambda)\|\operatorname{div}u \|_{L^{2}}^{2}
	=\int_\Omega P\operatorname{div}u\,dx.
\end{align}
Since $P= \rho^\gamma$ with $\gamma>1$, it follows from \eqref{1}$_{1}$ that
\begin{align}\label{PPP}
	\left(\frac{P}{\gamma-1}\right)_t
	+\operatorname{div}\left(\frac{P}{\gamma-1}u\right)
	+P\operatorname{div}u=0.
\end{align}
Integrating \eqref{PPP} over $\Omega$ and using
$(u\cdot\nu)\big|_{\partial\Omega}=0$, we get
\begin{equation*}
\frac{1}{\gamma-1}\frac{d}{dt}\|P \|_{L^{1}}
+\int_\Omega P\operatorname{div}u\,dx=0,
\end{equation*}
which combined with \eqref{kinetic-energy} leads to
\begin{align*}
	\frac{d}{dt}
	\bigg(\frac12 \|\sqrt{\rho} u \|_{L^{2}}^{2} +\frac{1}{\gamma-1} \|P \|_{L^{1}}\bigg)
	+\mu\|\nabla u \|_{L^{2}}^{2}+(\mu+\lambda)\|\operatorname{div}u \|_{L^{2}}^{2}
	=0.
\end{align*}
 	Integrating this equality over $(0,T)$ gives \eqref{x0}.
\end{proof}

\begin{proposition}\label{0pro}
	Assume that the hypotheses of Theorem \ref{th1} hold.
	Then there exist positive constants $\varepsilon_{1}$ and $c_{0}$
	depending only on the physical parameters $\mu$ and $\lambda$ such that if, for all $(x,t)\in \Omega\times[0,T]$,
	\begin{align}\label{cp1}
		\rho(x,t)\leq 2\bar\rho,
		\qquad
			\left( \bar\rho^3E(0)+\bar\rho^{\frac{3-\gamma}{2}}\right) D(t) \leq 2\varepsilon,
	\end{align}
	for some positive constant  $\varepsilon$  satisfying
	\begin{align*}
c_{0}\varepsilon_{0}\leq \varepsilon\leq \varepsilon_{1},
\end{align*}
	then, for all $(x,t)\in\Omega\times[0,T]$,
	\begin{align*}
		\rho(x,t)\leq \frac32\bar\rho,
		\qquad
		\left( \bar\rho^3E(0)+\bar\rho^{\frac{3-\gamma}{2}}\right) D(t)\leq \frac32\varepsilon.
	\end{align*}
		Here $\varepsilon_{1}$ and $c_{0}$  are chosen according to \eqref{sca3}, \eqref{eta-small-new}, \eqref{xsca1}, and \eqref{2.43}.
\end{proposition}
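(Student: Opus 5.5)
The plan is to close the two bootstrap bounds in order, using the strategy already outlined: first an evolution estimate for $D(t)$ under \eqref{cp1}, and then the density bound via Zlotnik's inequality (Lemma \ref{zo}). Throughout we use $\rho\le 2\bar\rho$, the energy bound of Lemma \ref{lem1}, and the elliptic estimates of Lemma \ref{FF}.

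\textbf{Step 1: a differential inequality for $D$.} Multiply \eqref{1}$_2$ by $u_t$ and integrate. The slip condition \eqref{4}, together with $\partial_3u^1=\partial_3u^2=u^3=0$ on the flat boundary, kills all boundary integrals, and the pressure is rewritten through $F$. This should give
\begin{align*}
\frac{d}{dt}\Big(\mu\|\operatorname{curl}u\|_{L^2}^2+\frac{\|F\|_{L^2}^2}{2\mu+\lambda}\Big)+\|\sqrt\rho\dot u\|_{L^2}^2\le C\|\nabla u\|_{L^3}^3+C\|P\|_{L^3}^3 .
\end{align*}
Next, use \eqref{ffx2} with $p=2$ to add $\frac{d}{dt}\|P\|_{L^2}^2$ and gain $\|P\|_{L^3}^3$ on the left. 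The cubic terms are bounded by \eqref{42}--\eqref{43} with $p=3$:
\begin{align*}
\|F\|_{L^3}^3+\|\operatorname{curl}u\|_{L^3}^3\le C D^{3/4}\|\rho\dot u\|_{L^2}^{3/2}\le C\bar\rho^{3/2}D^{3/4}\|\sqrt\rho\dot u\|_{L^2}^{3/2}.
\end{align*}
Young's inequality turns this into $\frac12\|\sqrt\rho\dot u\|^2+C\bar\rho^3D^3$. Writing $\bar\rho^3D^3=(\bar\rho^3E(0)D)\cdot D^2/E(0)$ is not directly usable, so instead I would interpolate $\|\nabla u\|_{L^2}^2$ against $E$. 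Alternatively, keep one factor as $\bar\rho^3D\cdot D\cdot D$ and integrate in time, using $\int_0^T\|\nabla u\|_{L^2}^2\le E(0)$ and a time-integrated bound $\int_0^T\|P\|_{L^3}^3\,dt$ coming from the same inequality. Either way the target is
\begin{align*}
\sup_t D(t)+\int_0^T\|\sqrt\rho\dot u\|_{L^2}^2dt\le C D(0)
\end{align*}
once $\bar\rho^3E(0)D\le2\varepsilon$ is small. Multiplying by $\bar\rho^3E(0)+\bar\rho^{\frac{3-\gamma}{2}}$ and using $(\ldots)D(0)\le\varepsilon_0$ from \eqref{sca} then yields $(\ldots)D(t)\le C\varepsilon_0\le\frac32\varepsilon$, provided $c_0\ge \frac{2}{3}C$.

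\textbf{Step 2: the density bound.} Along particle paths, $\frac{d}{dt}\ln\rho=-\frac{1}{2\mu+\lambda}(P+F)$. The right-hand side is split into $g(y)=-\frac{1}{2\mu+\lambda}e^{\gamma y}$, which gives the damping, and $b(t)=-\frac{1}{2\mu+\lambda}\int_0^tF$. Then $\|F\|_{L^\infty}\le C\|F\|_{L^2}^{1/4}\|\nabla F\|_{L^6}^{3/4}$, and $\nabla F$ is controlled by $\rho\dot u$ in $L^6$. This is the main obstacle: $\|\rho\dot u\|_{L^6}$ requires $\nabla\dot u$, so time-weighted estimates of $\|\sqrt\rho\dot u\|$ and $\nabla\dot u$ (the $t$ and $\sigma$ weighted version of Step 1, applied to the $\dot u$ equation, with $\dot u$ obeying the slip condition) are needed. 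On short times $[0,\min(1,T)]$ one obtains $N_0\lesssim$ small powers of the scaling quantities. On long times one uses $|b(t_2)-b(t_1)|\le N_1(t_2-t_1)+N_0$ with $N_1$ comparable to $\bar\rho^\gamma$ times a small factor. The pressure piece in \eqref{kun}, $\int\|\nabla u\|_{L^2}\|P\|_{L^6}$, is where \eqref{f1} enters: $\|P\|_{L^6}\lesssim \bar\rho^{\gamma}$-weighted interpolation, and $\bar\rho^{\frac{3-\gamma}{2}}D\le2\varepsilon$ makes its contribution $\le\varepsilon^{1/2}\bar\rho^\gamma$, which is absorbed into $N_1\le\frac{1}{2\mu+\lambda}\bar\rho^\gamma$. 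Choosing $\bar\zeta=\ln(\frac54\bar\rho)$ and $N_0\le\ln\frac65$ via smallness of \eqref{ff} and \eqref{f1} gives $\rho\le\frac32\bar\rho$. The constants $\varepsilon_1$ and $c_0$ are fixed from the finitely many smallness requirements collected above.
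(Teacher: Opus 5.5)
Your Step 1 is essentially the paper's Lemma \ref{lem2}, and it closes as you indicate after two fixes. First, the prefactor before Young should be $\bar\rho^{3/4}$, not $\bar\rho^{3/2}$; Young then gives $C\bar\rho^3D^3$, as you wrote. Second, the $\bar\rho^3\|P\|_{L^2}^6$ part of $\bar\rho^3D^3$ must be handled by the interpolation $\|P\|_{L^2}^4\le\|P\|_{L^1}\|P\|_{L^3}^3$ together with $\|P\|_{L^1}\le(\gamma-1)E(0)$. Its time integral is then at most $C\bar\rho^3E(0)\sup_tD\int_0^T\|P\|_{L^3}^3\,dt$, which can be absorbed. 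The $\|\nabla u\|_{L^2}^6$ part uses $\int_0^T\|\nabla u\|_{L^2}^2\,dt\le E(0)$. The final improvement $(\bar\rho^3E(0)+\bar\rho^{\frac{3-\gamma}{2}})D(t)\le C\varepsilon_0\le\frac32\varepsilon$ is also as in the paper.

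The genuine gap is Step 2. You bound $b(t_2)-b(t_1)$ through
\[
\|F\|_{L^\infty}\le C\|F\|_{L^2}^{1/4}\|\nabla F\|_{L^6}^{3/4}\le C\bar\rho^{3/4}\|F\|_{L^2}^{1/4}\|\nabla\dot u\|_{L^2}^{3/4},
\]
which needs time-weighted bounds on $\nabla\dot u$. These have three problems.
\begin{itemize}
\item They are not part of the bootstrap \eqref{cp1}, and you do not derive them.
\item To be usable they would have to be bounded by powers of $\bar\rho^3E(0)D$ and $\bar\rho^{\frac{3-\gamma}{2}}D$. Their derivation, cf.\ \eqref{ll6}, brings in quantities such as $\int t\|P\|_{L^4}^4\,dt$ and $\int\|P\|_{L^2}^2\,dt$. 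For these only data-dependent bounds are available (Corollary \ref{wen}, Lemma \ref{lem52}).
\item Splitting at $t=\min\{1,T\}$ is incompatible with the scaling \eqref{scaling}: the time $1$ is not scale invariant (the intrinsic time scale is $(2\mu+\lambda)\bar\rho^{-\gamma}$). So your $N_0$ would depend on $\bar\rho$ and the data, and $\varepsilon_1$, $c_0$ could not depend only on $\mu,\lambda$.
\end{itemize}
In addition, the pressure term $\int\|\nabla u\|_{L^2}\|P\|_{L^6}\,d\tau$ that you quote from \eqref{kun} does not arise in your decomposition at all. It comes from a different decomposition that you never set up.

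The missing idea is the commutator representation of $F$ along particle paths:
\[
F(X(\tau),\tau)=-\frac{d}{d\tau}\big[(-\Delta)^{-1}\operatorname{div}(\rho u)\big](X(\tau),\tau)+[u_i,R_{ij}](\rho u_j)(X(\tau),\tau).
\]
This uses no higher-order information.
\begin{itemize}
\item The total-derivative term contributes to $N_0$ via
\[
\sup_t\|(-\Delta)^{-1}\operatorname{div}(\rho u)\|_{L^\infty}\le C\sup_t\|\rho u\|_{L^2}^{1/2}\|\rho u\|_{L^6}^{1/2}\le C\big(\bar\rho^3E(0)\sup_tD\big)^{1/4}.
\]
\item The commutator is bounded by $C\bar\rho\|\nabla u\|_{L^2}\|\nabla u\|_{L^6}$. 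By \eqref{43} this splits into two pieces.
\begin{itemize}
\item The piece $\bar\rho^{3/2}\|\nabla u\|_{L^2}\|\sqrt\rho\dot u\|_{L^2}$ has time integral at most $C(\bar\rho^3E(0)\sup_tD)^{1/2}$, by Lemma \ref{lem1} and your Step 1. This is the rest of $N_0$.
\item The piece $\bar\rho\|\nabla u\|_{L^2}\|P\|_{L^6}\le C\bar\rho^{1+\frac{2\gamma}{3}}D^{2/3}=C\bar\rho^\gamma\big(\bar\rho^{\frac{3-\gamma}{2}}D\big)^{2/3}$ gives $N_1=C\bar\rho^\gamma\varepsilon^{2/3}\le\bar\rho^\gamma/(2\mu+\lambda)$.
\end{itemize}
\end{itemize}
Zlotnik's Lemma \ref{zo} with $\bar\zeta\approx\ln\bar\rho$ then closes the density bound using only Lemma \ref{lem1}, Step 1, and \eqref{cp1}.
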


\begin{lemma}\label{lem2}
Under the  assumption \eqref{cp1}, it holds that
	\begin{align}\label{2.15}
		\sup _{0 \leq t \leq T}D(t) +\int_{0}^{T}\left( \|\sqrt{\rho} \dot{u}\|_{L^{2}}^{2}+ \|P\|_{L^{3}}^{3}\right) \, d t \leq CD(0).
	\end{align}
\end{lemma}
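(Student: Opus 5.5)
We test the momentum equation \eqref{1}$_2$ with $u_t$ and integrate over $\Omega$. For the viscous terms we use the slip condition \eqref{4}: on the flat boundary we have $u^3=0$, hence $u^3_t=0$, and $\partial_3u^1=\partial_3u^2=0$. Therefore $\partial_\nu u\cdot u_t=0$ on $\partial\Omega$, and the identity $\Delta u=\nabla\operatorname{div}u-\operatorname{curl}\operatorname{curl}u$ produces no boundary term. This gives
\begin{align*}
\frac12\frac{d}{dt}\Big(\mu\|\nabla u\|_{L^2}^2+(\mu+\lambda)\|\operatorname{div}u\|_{L^2}^2\Big)+\|\sqrt\rho\dot u\|_{L^2}^2
=\int_\Omega P\operatorname{div}u_t\,dx+\int_\Omega\rho\dot u\cdot(u\cdot\nabla u)\,dx .
\end{align*}
For the pressure term we write $\int P\operatorname{div}u_t=\frac{d}{dt}\int P\operatorname{div}u-\int P_t\operatorname{div}u$. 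We then use $P_t=-\operatorname{div}(Pu)-(\gamma-1)P\operatorname{div}u$ and integrate by parts, which is allowed since $u\cdot\nu=0$. Replacing $\operatorname{div}u$ by $(F+P)/(2\mu+\lambda)$, the remainder is bounded by $C\int(|F|+P)P|\nabla u|$. By Young's inequality this is at most $\delta\|P\|_{L^3}^3+C\|F\|_{L^3}^3+C\|\nabla u\|_{L^3}^3$. The convective term is bounded by $\frac14\|\sqrt\rho\dot u\|_{L^2}^2+C\bar\rho\|u\|_{L^6}^2\|\nabla u\|_{L^3}^2$. Rewriting $\mu\|\nabla u\|_{L^2}^2+(\mu+\lambda)\|\operatorname{div}u\|_{L^2}^2-2\int P\operatorname{div}u$ through $F$ yields a functional equivalent to $D(t)$ once a multiple of $\|P\|_{L^2}^2$ is added. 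This is the estimate we expect to call \eqref{2.12}.

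To control $\|P\|_{L^3}^3$ and to make $\|P\|_{L^2}^2$ part of the dissipated quantity, we use the pressure equation. Multiplying $P_t+u\cdot\nabla P+\gamma P\operatorname{div}u=0$ by $P$ and using $\operatorname{div}u=(F+P)/(2\mu+\lambda)$ gives
\begin{align*}
\frac{d}{dt}\|P\|_{L^2}^2+\frac{2\gamma-1}{2(2\mu+\lambda)}\|P\|_{L^3}^3\le C\|F\|_{L^3}^3 .
\end{align*}
We add a large multiple of this inequality to \eqref{2.12}. The cubic terms are then handled by Lemma~\ref{FF}. By \eqref{42} with $p=3$, $\|F\|_{L^3}^3+\|\operatorname{curl}u\|_{L^3}^3\le CD^{3/4}\|\rho\dot u\|_{L^2}^{3/2}$. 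Using $\|\rho\dot u\|_{L^2}\le(2\bar\rho)^{1/2}\|\sqrt\rho\dot u\|_{L^2}$, this is bounded by $C\bar\rho^{3/4}D^{3/4}\|\sqrt\rho\dot u\|_{L^2}^{3/2}$. Young's inequality turns this into $\delta\|\sqrt\rho\dot u\|_{L^2}^2+C\bar\rho^3D^3$. By \eqref{40}, $\|\nabla u\|_{L^3}^3$ is controlled by the same quantities plus $\|P\|_{L^3}^3$. In the convective term we use $\|u\|_{L^6}\le C\|\nabla u\|_{L^2}$, so it is handled the same way.

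The crucial point is absorbing $C\bar\rho^3D^3=C(\bar\rho^3D^2)D$. We split $D^2=D\cdot D$ and use \eqref{cp1} to write $\bar\rho^3D^2\le CD\cdot\varepsilon/E(0)$. This alone is not enough, and we would not close the estimate by bounding $D$ pointwise. Instead we write the inequality as
\begin{align*}
\frac{d}{dt}\tilde D+c\Big(\|\sqrt\rho\dot u\|_{L^2}^2+\|P\|_{L^3}^3\Big)\le C\bar\rho^3D^2\,\|\nabla u\|_{L^2}^2+\dots ,
\end{align*}
keeping one factor $\|\nabla u\|_{L^2}^2$, which is time-integrable with integral at most $E(0)$ by Lemma~\ref{lem1}. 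We also need to control $\|P\|_{L^2}^2$ inside $D$. For this we interpolate, $\|P\|_{L^2}^2\le\|P\|_{L^1}^{1/2}\|P\|_{L^3}^{3/2}$, so the $P$-parts can be absorbed into the dissipation $\|P\|_{L^3}^3$, using $\|P\|_{L^1}\le CE(0)$. In each case the coefficient in front of $\|\sqrt\rho\dot u\|_{L^2}^2$, $\|P\|_{L^3}^3$, or $\|\nabla u\|_{L^2}^2$ is a power of $\bar\rho^3E(0)D\le2\varepsilon$.

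Once $\varepsilon\le\varepsilon_1$ is small, the right-hand side is either absorbed or satisfies $\int_0^T(\cdot)\le C\varepsilon D(0)$ via Gronwall's inequality, with exponent at most $C\varepsilon$. This gives \eqref{2.15}. The main obstacle is this bookkeeping, namely ensuring that every super-quadratic remainder carries exactly the scaling-invariant factor $\bar\rho^3E(0)D$. If the terms do not match up, we would use the $\bar\rho^{\frac{3-\gamma}{2}}D$ smallness together with $\|P\|_{L^\infty}\le(2\bar\rho)^\gamma$ to handle the terms that are purely in $P$.
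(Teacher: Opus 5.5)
\textbf{Gap: a dropped term in the pressure work.} Your overall scheme matches the paper's. You test with $u_t$, split the pressure work through $F$, add a large multiple of the $L^2$ pressure inequality, and bound $\|F\|_{L^3}^3$ by \eqref{42}. Every super-quadratic term then carries $\bar\rho^3E(0)D$ via $\|P\|_{L^2}^4\le\|P\|_{L^1}\|P\|_{L^3}^3$ and $\int_0^T\|\nabla u\|_{L^2}^2\,dt\le E(0)$.

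One step fails as written. You claim that, after substituting $P_t=-\operatorname{div}(Pu)-(\gamma-1)P\operatorname{div}u$ and $\operatorname{div}u=(F+P)/(2\mu+\lambda)$, the remainder is bounded by $C\int_\Omega(|F|+P)P|\nabla u|\,dx$. Apart from the time-derivative term that goes into the functional, the remainder is actually
\begin{align*}
-\frac{1}{2\mu+\lambda}\int_\Omega Pu\cdot\nabla F\,dx+\frac{\gamma-1}{2\mu+\lambda}\int_\Omega P\operatorname{div}u\,F\,dx .
\end{align*}
The first integral pairs the undifferentiated velocity with $\nabla F$, so it is not of the form you wrote. No rearrangement avoids it. Integrating $\int_\Omega\operatorname{div}(Pu)\operatorname{div}u\,dx$ by parts puts the derivative on $\operatorname{div}u=(F+P)/(2\mu+\lambda)$. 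The $\nabla P$ part integrates back to $\int_\Omega P^2\operatorname{div}u\,dx$, but the $\nabla F$ part remains. Not integrating by parts leaves $\nabla P$, which is not controlled at this level. As stated, your bound on the pressure work omits a term.

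\textbf{Repair.} The missing ingredient is the gradient bound \eqref{41} for $F$, not just the $L^p$ bound \eqref{42}. By H\"older's inequality, $\|u\|_{L^6}\le C\|\nabla u\|_{L^2}$, \eqref{41}, $\rho\le 2\bar\rho$, and Young's inequality,
\begin{align*}
\Big|\int_\Omega Pu\cdot\nabla F\,dx\Big|
&\le \|P\|_{L^3}\|u\|_{L^6}\|\nabla F\|_{L^2}
\le C\bar\rho^{1/2}\|P\|_{L^3}\|\nabla u\|_{L^2}\|\sqrt\rho\dot u\|_{L^2}\\
&\le \frac14\|\sqrt\rho\dot u\|_{L^2}^2+C\|P\|_{L^3}^3+C\bar\rho^3\|\nabla u\|_{L^2}^6 .
\end{align*}
This fits your scheme. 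The $\|P\|_{L^3}^3$ is absorbed by the large multiple of the pressure dissipation, and $\bar\rho^3\|\nabla u\|_{L^2}^6\le\bar\rho^3D^2\|\nabla u\|_{L^2}^2$. It is exactly the paper's term $I_2$.

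\textbf{Remaining differences.} With this repair the rest of your argument goes through, and the other differences from the paper are minor. You estimate $\int_\Omega P\operatorname{div}u\,F\,dx$ with an $L^3\times L^3\times L^3$ H\"older bound and return through \eqref{40}. The paper instead uses $\|P\|_{L^3}\|\nabla u\|_{L^2}\|F\|_{L^6}\le C\|P\|_{L^3}\|\nabla u\|_{L^2}\|\nabla F\|_{L^2}$. You close with Gronwall's inequality, using the pointwise bound $\bar\rho^3E(0)D\le2\varepsilon$ so the exponent is at most $C\varepsilon$. The paper integrates in time and absorbs using \eqref{sca3}; both work. Your fallback to $\bar\rho^{\frac{3-\gamma}{2}}D$ is not needed in this lemma, since that quantity only enters the density bound through Zlotnik's inequality.
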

\begin{proof}
	Differentiating $u^3=0$ with respect to $t$ gives
$u_t^3=0$ on $\partial\Omega$,
which combined with \eqref{bian3} yields that
	\begin{align}\label{bian2}
	\frac{\partial u}{\partial\nu}\cdot u_t
	=
	-\partial_{x_3}u\cdot u_t
	=
	-\partial_{x_3}u^1u_t^1
	-\partial_{x_3}u^2u_t^2
	-\partial_{x_3}u^3u_t^3
	=0
	\quad\text{on }\partial\Omega.
\end{align}
Multiplying \eqref{1}$_{2}$ by $u_t$, integrating the resultant over $\Omega$, and using $(u_t\cdot\nu)\big|_{\partial\Omega}=0$ and \eqref{bian2}, one has that
 \begin{align}
 	&\frac{1}{2}\frac{d}{dt}
 	\big(
 	\mu\|\nabla u \|_{L^{2}}^{2}+(\mu+\lambda)\|\operatorname{div}u \|_{L^{2}}^{2}
 	\big)
 	+\|\sqrt{\rho} u_{t}\|_{L^{2}}^{2} \notag\\
 	&  	=
 	-\int_\Omega \rho u\cdot\nabla u\cdot u_t\,dx+\frac{d}{d t} \int_\Omega P \operatorname{div} u \, d x
 	-\int_\Omega P_t\operatorname{div}u\,dx.
 	\label{ut-energy-identity-1}
 \end{align}
Owing to
\begin{align*}
P_{t}+\operatorname{div}(Pu)+(\gamma-1)P\operatorname{div}u=0,\quad \operatorname{div}u=\frac{F+P}{2\mu+\lambda},
\end{align*}
one gets that
\begin{align}\label{diu}
\int_\Omega P_t\operatorname{div}u\,dx&=\frac{1}{ 2 \mu+\lambda }\int_\Omega P_tF\,dx+\frac{1}{2(2 \mu+\lambda)} \frac{d}{d t} \|P \|_{L^{2}}^{2}\notag\\
&= \frac{1}{ 2 \mu+\lambda }\int_\Omega Pu\cdot \nabla F\,dx-\frac{\gamma-1}{ 2 \mu+\lambda }\int_\Omega P\operatorname{div}uF \,dx+\frac{1}{2(2 \mu+\lambda)} \frac{d}{d t}\|P \|_{L^{2}}^{2}.
\end{align}
Thus, it follows from $\dot u=u_t+u\cdot\nabla u$, \eqref{ut-energy-identity-1}, and \eqref{diu} that
 \begin{align*}
 	&  \frac{1}{2} \frac{d}{d t}\big(
 	\mu\|\nabla u \|_{L^{2}}^{2}+(\mu+\lambda)\|\operatorname{div}u \|_{L^{2}}^{2}
 	\big) +\|\sqrt{\rho} \dot{u}\|_{L^{2}}^{2} \\
 	&=   \int_\Omega \rho u \cdot \nabla u \cdot \dot{u}\, d x+\frac{d}{d t} \int_\Omega P \operatorname{div} u \, d x-\int_\Omega P_{t} \operatorname{div} u \, d x \\
 	&=  \int_\Omega \rho u \cdot \nabla u \cdot \dot{u} \, d x+\frac{d}{d t} \int_\Omega P \operatorname{div} u \, d x-\frac{1}{2(2 \mu+\lambda)} \frac{d}{d t}\|P \|_{L^{2}}^{2}-\frac{1}{2 \mu+\lambda} \int_\Omega P u \cdot \nabla F \, d x \\
 	&\quad  +\frac{\gamma-1}{2 \mu+\lambda} \int_\Omega P \operatorname{div} u F \, d x\\
 	& \leq \frac{1}{2}\|\sqrt{\rho} \dot{u}\|_{L^{2}}^{2}+C \int_\Omega \rho|u|^{2}|\nabla u|^{2}\, d x+\frac{d}{d t} \int_\Omega P \operatorname{div} u \, d x-\frac{1}{2(2 \mu+\lambda)} \frac{d}{d t} \|P \|_{L^{2}}^{2}\\
 	&\quad -\frac{1}{2 \mu+\lambda} \int_\Omega P u \cdot \nabla F \, d x  +\frac{\gamma-1}{2 \mu+\lambda} \int_\Omega P \operatorname{div} u F \, d x,
 \end{align*}
 which yields that
 \begin{align}\label{2.5}
  \frac{d}{d t}  A(t)+\|\sqrt{\rho} \dot{u}\|_{L^{2}}^{2}  \leq C \int_\Omega \rho|u|^{2}|\nabla u|^{2} \, d x+C\left| \int_\Omega P u \cdot \nabla F \, d x\right| +C\left| \int_\Omega P \operatorname{div} u F \, d x\right|
 	\triangleq \sum_{i=1}^{3} I_{i},
 \end{align}
where
 \begin{align*}
A(t)\triangleq \mu\|\nabla u\|_{L^{2}}^{2}+(\mu+\lambda)\|\operatorname{div} u\|_{L^{2}}^{2}-2\int_\Omega   P \operatorname{div} u \, dx+\frac{1}{ 2 \mu+\lambda }  \|P \|_{L^{2}}^{2}.
 \end{align*}

We need to estimate the terms $I_i$. By \eqref{41}, \eqref{43}, H\"older's
inequality, and \eqref{cp1}, one sees that
\begin{align*}
 	I_{1} & \leq C\|\rho\|_{L^{\infty}}\|u\|_{L^{6}}^{2}\|\nabla u\|_{L^{3}}^{2} \notag\\[1mm]
 	&\leq C \bar{\rho} \|\nabla u\|_{L^{2}}^{2}\left(\| \rho  \dot{u}\|_{L^{2}} \|\nabla u\|_{L^{2}} +\| \rho  \dot{u}\|_{L^{2}}\|P \|_{L^{2}}+ \|P \|_{L^{3}}^{2}\right) \\[1mm]
&\leq C \bar{\rho}^{\frac{3}{2}}\|\nabla u\|_{L^{2}}^{2}\left(\|\nabla u\|_{L^{2}}+ \|P \|_{L^{2}}\right)\|\sqrt{\rho} \dot{u}\|_{L^{2}}+C \bar{\rho}\|\nabla u\|_{L^{2}}^{2}\left\|P\right\|_{L^{3}}^{2}\notag\\
&\leq \frac{1}{4}\|\sqrt{\rho} \dot{u}\|_{L^{2}}^{2} +C \bar{\rho}^{3}\|\nabla u\|_{L^{2}}^{4}\left(\|\nabla u\|_{L^{2}}^{2}+ \|P \|_{L^{2}}^{2}\right)+C \|P \|_{L^{3}}^{3},
\\
 	I_{2}+I_{3}  &  \leq C\|\nabla u\|_{L^{2}} \|P \|_{L^{3}}\|\nabla F\|_{L^{2}} \\[1mm]
 	& \leq C \bar{\rho}^{\frac{1}{2}}\|\nabla u\|_{L^{2}} \|P \|_{L^{3}}  \|\sqrt{\rho} \dot{u}\|_{L^{2}}\\
 	& \leq \frac{1}{4}\|\sqrt{\rho} \dot{u}\|_{L^{2}}^{2} +C \bar{\rho}^{3}\|\nabla u\|_{L^{2}}^{6}+C \|P \|_{L^{3}}^{3}.
\end{align*}
 Substituting the above estimates on $I_{i}$ into \eqref{2.5} leads to
\begin{align}\label{2.12}
  \frac{d}{d t} A(t)+\|\sqrt{\rho} \dot{u}\|_{L^{2}}^{2}  \leq C \bar{\rho}^{3}\|\nabla u\|_{L^{2}}^{4}\big(\|\nabla u\|_{L^{2}}^{2}+\|P\|_{L^{2}}^{2}\big)+C_{1}\|P\|_{L^{3}}^{3}.
\end{align}

Next, we derive an estimate for the pressure. It follows from \eqref{1}$_{1}$ that
	\begin{align}\label{xx1}
		P_t+u\cdot \nabla P+\gamma P \operatorname{div} u=0.
	\end{align}
For  $p \geq 2$, multiplying \eqref{xx1} by  $p P^{p-1}$  and integrating the resultant over  $\Omega$ yields that
	\begin{align*}
		\frac{d}{d t}\|P\|_{L^{p}}^{p} +\frac{p \gamma-1}{2 \mu+\lambda}\|P\|_{L^{p+1}}^{p+1}=-\frac{p \gamma-1}{2 \mu+\lambda} \int_\Omega P^{p} F \, d x.
	\end{align*}
By H\"older's and Young's inequalities, this implies
	\begin{align}\label{xx2}
		\frac{d}{d t}\|P\|_{L^{p}}^{p}+\frac{p \gamma-1}{2(2 \mu+\lambda)}\|P\|_{L^{p+1}}^{p+1} \leq C \|F\|_{L^{p+1}}^{p+1}.
	\end{align}
	Taking $p=2$ in \eqref{xx2}, we get that
	\begin{align}\label{x21}
		\frac{d}{d t}\|P\|_{L^{2}}^{2}+\frac{2 \gamma-1}{2(2 \mu+\lambda)}\|P\|_{L^{3}}^{3}  \leq C \|F\|_{L^{3}}^{3}.
	\end{align}

Choosing a positive constant $C_2>\frac{2(2 \mu+\lambda)(C_{1}+1)}{2 \gamma-1}>0$ sufficiently large such that
	\begin{align*}
		\|\nabla u\|_{L^{2}}^{2}+\|P\|_{L^{2}}^{2} \leq A(t)+C_{2}\|P\|_{L^{2}}^{2} \leq   C\left( \|\nabla u\|_{L^{2}}^{2}+ \|P\|_{L^{2}}^{2}\right).
	\end{align*}
Then, multiplying \eqref{x21} by $C_2$ and adding the resultant to
	\eqref{2.12},  we arrive at
	\begin{align}\label{zui1}
	  \frac{d}{d t} \big(A(t)+C_{2}\|P(t)\|_{L^{2}}^{2}\big)+\|\sqrt{\rho} \dot{u}\|_{L^{2}}^{2}+ \|P\|_{L^{3}}^{3}  \leq C \bar{\rho}^{3}\|\nabla u\|_{L^{2}}^{4}\big(\|\nabla u\|_{L^{2}}^{2}+\|P\|_{L^{2}}^{2}\big) +C\bar{\rho}^{3}  \|P \|_{L^{2}} ^{6},
	\end{align}
	where we have used  Young's inequality, \eqref{42}, and
	\begin{align*}
		 C \|F\|_{L^{3}}^{3} \leq \frac{1}{4}\|\sqrt{\rho} \dot{u}\|_{L^{2}}^{2}+C\bar{\rho}^{3}\left( \|P \|_{L^{2}}^{6}+\|\nabla u \|_{L^{2}}^{6}\right).
	\end{align*}	
	Integrating \eqref{zui1}  over $(0, T)$ and using \eqref{x0}, we obtain taht
	\begin{align}\label{ok}
		&\sup_{0\leq t\leq T}D(t)
		+
		\int_{0}^{T}
		\left(
		\|\sqrt{\rho}\dot u\|_{L^{2}}^{2}
		+
		\|P\|_{L^{3}}^{3}
		\right)\,dt
		\notag\\
		&\leq
		CD(0)
		+
		C\bar\rho^{3}\int_{0}^{T}\|P\|_{L^{2}}^{6}\,dt
		+
		C\bar\rho^{3}
		\int_{0}^{T}D(t)\|\nabla u\|_{L^{2}}^{4}\,dt
		\notag\\
		&\leq
		CD(0)
		+
		C\bar\rho^{3}
		\int_{0}^{T}
		\|P\|_{L^{1}}\|P\|_{L^{2}}^{2}\|P\|_{L^{3}}^{3}\,dt
		+
		C\bar\rho^{3}
		\left( \sup_{0\leq t\leq T}D(t)\right)^{2}
		\int_{0}^{T}\|\nabla u\|_{L^{2}}^{2}\,dt
		\notag\\
		&\leq
		CD(0)
		+
		C\bar\rho^{3}E(0)
		\sup_{0\leq t\leq T}D(t)
		\int_{0}^{T}\|P\|_{L^{3}}^{3}\,dt
		+
		C\bar\rho^{3}E(0)
		\left(
		\sup_{0\leq t\leq T}D(t)
		\right)^2
		\notag\\
		&\leq
		CD(0)
		+
		C\bar\rho^{3}E(0)
		\sup_{0\leq t\leq T}D(t)
		\left(
		\int_{0}^{T}\|P\|_{L^{3}}^{3}\,dt
		+
		\sup_{0\leq t\leq T}D(t)
		\right).
	\end{align}
	Choosing $\varepsilon>0$ in \eqref{cp1} sufficiently small so that
	\begin{align}\label{sca3}
		C\bar\rho^{3}E(0)
		\sup_{0\leq t\leq T}D(t)
		\leq
		\frac{1}{2},
	\end{align}
the last term on the right-hand side of \eqref{ok} can be absorbed into the
	left-hand side. Therefore, \eqref{2.15} follows.
\end{proof}

Now we can prove Proposition \ref{0pro}.
\begin{proof}[Proof of Proposition \ref{0pro}]
	Fix $(x,t)\in \Omega\times[0,T]$, and let $X(s;x,t)$ be the particle
	trajectory determined by
\begin{align*}
\begin{cases}
\frac{d}{ds}X(s;x,t)=u(X(s;x,t),s),\quad 0\leq s<t,\\
X(t;x,t)=x.
\end{cases}
\end{align*}
	For $\delta>0$, define
	\begin{align*}
		\rho^\delta(y,s)
		\triangleq
		\rho(y,s)
		+
		\delta
		\exp\left\{
		-\int_0^s
		\operatorname{div}u(X(\tau;x,t),\tau)\,d\tau
		\right\}.
	\end{align*}
	Then $\rho^\delta>0$, and the continuity equation \eqref{1}$_1$ yields,
	along the trajectory $X(s;x,t)$,
	\begin{align*}
		\frac{d}{ds}\rho^\delta(X(s;x,t),s)
		+
		\rho^\delta(X(s;x,t),s)
		\operatorname{div}u(X(s;x,t),s)
		=0.
	\end{align*}
	
	Set
	\begin{align*}
		Y(s)\triangleq \ln \rho^\delta(X(s;x,t),s).
	\end{align*}
	Since $\operatorname{div}u=\frac{F+P}{2\mu+\lambda}$,
	we obtain that
	\begin{align}\label{Y-eq-new}
		Y'(s)
		=
		-\frac{1}{2\mu+\lambda}
		\left(
		F(X(s;x,t),s)+P(X(s;x,t),s)
		\right)\triangleq =b'(s)+g(Y(s)),
	\end{align}
	where
	\begin{align*}
b(s)\triangleq
		-\frac{1}{2\mu+\lambda}
		\int_0^s F(X(\tau;x,t),\tau)\,d\tau,\ \
g(Y)\triangleq
		-\frac{
			\left(
			e^Y
			-
			\delta
			\exp\left\{
			-\int_0^s
			\operatorname{div}u(X(\tau;x,t),\tau)\,d\tau
			\right\}
			\right)^\gamma
		}{2\mu+\lambda}.
	\end{align*}
	Clearly, $g(Y)\to -\infty$ as $Y\to\infty$.
	
Next, we estimate the oscillation of $b$. The representation formula for
	the \textit{effective viscous flux} gives
	\begin{align*}
		F(X(\tau;x,t),\tau)
		=
		-\frac{d}{d\tau}
		\left[
		(-\Delta)^{-1}\operatorname{div}(\rho u)
		\right](X(\tau;x,t),\tau)
		 +
		[u_i,R_{ij}](\rho u_j)(X(\tau;x,t),\tau),
	\end{align*}
	where
	\begin{align*}
		[u_i,R_{ij}](\rho u_j)
		\triangleq
		u_iR_{ij}(\rho u_j)-R_{ij}(u_i\rho u_j),
		\quad
		R_{ij}\triangleq \partial_i(-\Delta)^{-1}\partial_j.
	\end{align*}
	Therefore, for any $0\leq t_1<t_2\leq T$,
	\begin{align}\label{b-split-new}
		b(t_2)-b(t_1)
	 \leq
		C\sup_{0\leq t\leq T}
		\left\|
		(-\Delta)^{-1}\operatorname{div}(\rho u)(t)
		\right\|_{L^\infty}
		+
		C\int_{t_1}^{t_2}
		\left\|
		[u_i,R_{ij}](\rho u_j)
		\right\|_{L^\infty}\,d\tau
	\triangleq
		\mathrm{III}_1+\mathrm{III}_2.
	\end{align}
By Gagliardo--Nirenberg inequality, Sobolev's inequality, \eqref{cp1}, \eqref{x0}, and \eqref{2.15}, we have
	\begin{align}\label{III1-new}
		\mathrm{III}_1
		&\leq
		C\sup_{0\leq t\leq T}
		\left\|
		(-\Delta)^{-1}\operatorname{div}(\rho u)(t)
		\right\|_{L^6}^{\frac12}
		\left\|
		\nabla(-\Delta)^{-1}\operatorname{div}(\rho u)(t)
		\right\|_{L^6}^{\frac12}
		\notag\\
		&\leq
		C\sup_{0\leq t\leq T}
		\left(
		\|\rho u(t)\|_{L^2}^{\frac12}
		\|\rho u(t)\|_{L^6}^{\frac12}
		\right)
		\notag\\
		&\leq
		C
		\left(
		\bar\rho^3 E(0)
		\sup_{0\leq t\leq T}D(t)
		\right)^{\frac14}
		\notag\\[2mm]
		&\leq
		C(2\varepsilon)^{\frac14}.
	\end{align}
According to the commutator estimate and \eqref{43}, one finds that
	\begin{align}\label{kun}
		\mathrm{III}_2
		&\leq
		C\int_{t_1}^{t_2}
		\|[u_i,R_{ij}](\rho u_j)\|_{L^3}^{\frac15}
		\|\nabla [u_i,R_{ij}](\rho u_j)\|_{L^4}^{\frac45}\,d\tau
		\notag\\
		&\leq
		C\int_{t_1}^{t_2}
		\bar\rho
		\|\nabla u\|_{L^2}
		\|\nabla u\|_{L^6}\,d\tau
		\notag\\
		&\leq
		C\bar\rho^{\frac32}
		\int_{t_1}^{t_2}
		\|\nabla u\|_{L^2}
		\|\sqrt{\rho}\dot u\|_{L^2}\,d\tau
		+
		C\bar\rho
		\int_{t_1}^{t_2}
		\|\nabla u\|_{L^2}
		\|P\|_{L^6}\,d\tau.
	\end{align}
	For the first term on the right-hand side of \eqref{kun}, it follows from H\"older's inequality, \eqref{x0}, and \eqref{2.15} that
	\begin{align}\label{III2-first-new}
	 C\bar\rho^{\frac32}
		\int_{t_1}^{t_2}
		\|\nabla u\|_{L^2}
		\|\sqrt{\rho}\dot u\|_{L^2}\,d\tau
		& \leq
		C\bar\rho^{\frac32}
		\left(
		\int_0^T \|\nabla u\|_{L^2}^2\,d\tau
		\right)^{\frac12}
		\left(
		\int_0^T \|\sqrt{\rho}\dot u\|_{L^2}^2\,d\tau
		\right)^{\frac12}
		\notag\\
		& \leq
		C\bar\rho^{\frac32}
		E(0)^{\frac12}
		\left(
		\sup_{0\leq t\leq T}D(t)
		\right)^{\frac12}
		\notag\\
		& \leq
		C
		\left(
		\bar\rho^3E(0)
		\sup_{0\leq t\leq T}D(t)
		\right)^{\frac12}
		\notag\\[2mm]
		& \leq
		C(2\varepsilon)^{\frac12},
	\end{align}
	while one infers from H{\"o}lder's inequality and
	\eqref{cp1} that
	\begin{align}\label{III2-pressure-new}
	 C\bar\rho\int_{t_1}^{t_2}\|\nabla u\|_{L^2}\|P\|_{L^6}\,d\tau
&\leq C\bar\rho\int_{t_1}^{t_2}\|\nabla u\|_{L^2}\|P\|_{L^\infty}^{\frac23}
		\|P\|_{L^2}^{\frac13}\,d\tau\notag\\
		&\leq
		C\bar\rho^{1+\frac{2\gamma}{3}}
		\int_{t_1}^{t_2}
		D(\tau)^{\frac23}\,d\tau
		\notag\\
		&\leq
		C\bar\rho^{1+\frac{2\gamma}{3}}
		\left(
		\sup_{0\leq t\leq T}D(t)
		\right)^{\frac23}
		(t_2-t_1)
		\notag\\
		&=
		C\bar\rho^\gamma
		\left(
		\bar\rho^{\frac{3-\gamma}{2}}
		\sup_{0\leq t\leq T}D(t)
		\right)^{\frac23}
		(t_2-t_1)
		\notag\\[2mm]
		&\leq
		C\bar\rho^\gamma
		(2\varepsilon)^{\frac23}
		(t_2-t_1).
	\end{align}
	Combining \eqref{III1-new}--\eqref{III2-pressure-new}, we obtain that
	\begin{align*}
		b(t_2)-b(t_1)
		\leq
		C(2\varepsilon)^{\frac14}
		+
		C(2\varepsilon)^{\frac12}+C\bar\rho^\gamma(2\varepsilon)^{\frac23}(t_2-t_1).
	\end{align*}
	
	We choose $\varepsilon>0$ in \eqref{cp1} sufficiently small so that
	\begin{align}\label{eta-small-new}
		C(2\varepsilon)^{\frac23}
		\leq
		\frac{1}{2\mu+\lambda}.
	\end{align}
	Define
	\begin{align*}
		\bar Y_\delta
		\triangleq
		\ln\left\{
		\bar\rho
		+
		\delta
		\exp\left(
		\int_0^T
		\|\operatorname{div}u(\cdot,\tau)\|_{L^\infty}\,d\tau
		\right)
		\right\}.
	\end{align*}
	If $Y\geq \bar Y_\delta$, then
	\begin{align*}
		g(Y)
	\leq
		-\frac{\bar\rho^\gamma}{2\mu+\lambda}
		\leq
		-N_1,
	\end{align*}
	where the last inequality follows from \eqref{eta-small-new}. Thus, the
	hypotheses of Zlotnik's inequality are satisfied for \eqref{Y-eq-new}.
	Therefore,
	\begin{align*}
		\rho^\delta(x,t)
		\leq
		\max\left\{
		\bar\rho+\delta,\,
		\exp\{\bar Y_\delta\}
		\right\}
		\exp\{N_0\}.
	\end{align*}
	Letting $\delta\to0$, we find
	\begin{align*}
		\rho(x,t)
		\leq
		\bar\rho \exp\{N_0\}.
	\end{align*}
	Choosing $\varepsilon>0$ smaller if necessary such that
	\begin{align}\label{xsca1}
		C(2\varepsilon)^{\frac14}
		+
		C(2\varepsilon)^{\frac12}
		\leq
		\ln\frac32,
	\end{align}
	we conclude that
	\begin{align*}
		\rho(x,t)
		\leq
		\bar\rho e^{N_0}
		\leq
		\frac32\bar\rho.
	\end{align*}

Finally, it remains to improve the bound for $D(t)$. By \eqref{2.15}, one has that
	\begin{align*}
		\left(
		\bar\rho^3E(0)+\bar\rho^{\frac{3-\gamma}{2}}
		\right)D(t)
	 \leq
		C\left(
		\bar\rho^3E(0)+\bar\rho^{\frac{3-\gamma}{2}}
		\right)D(0)
	 \leq
		C\varepsilon_0.
	\end{align*}
	Setting
	\begin{align}\label{2.43}
		c_0
		\triangleq
		\max\left\{
		\frac12,\frac{2C}{3}
		\right\},
	\end{align}
then one gets that
	\begin{align*}
	C\varepsilon_0\leq \frac32\varepsilon
\end{align*}
due to $c_0\varepsilon_0\leq \varepsilon$.
Thus, we complete the proof of Proposition \ref{0pro}.
\end{proof}

\subsection{A \textit{ priori}  estimates II: time-weighted estimates}\label{sub2}

Throughout this subsection, we assume that the bootstrap assumption
\eqref{cp1} hold. We use $C(T)$ to emphasize the dependence on $T$. We shall derive several higher-order estimates which are needed to establish the time continuity of the solution. As a direct consequence of the estimates obtained in subsection \ref{sub1}, we
have the following corollary.

\begin{corollary}\label{wen}
	Under the assumption \eqref{cp1},  it holds that
	\begin{gather}
		\sup_{0 \leq t \leq T}\big(\| \sqrt{\rho} u(t)\|_{L^{2}}^{2}+\| \nabla u (t)\|_{L^{2}}^{2}\big)+\int_{0}^{T}\left(\|\nabla u\|_{L^{2}}^{2} +\| \sqrt{\rho}\dot{u}\|_{L^{2}}^{2} \right) \, d t \leq C,\label{wen1}\\
		\sup_{0 \leq t \leq T}\big(\|P(t) \|_{L^{1}} +\|P(t) \|_{L^{2}}^{2}+\|\rho(t) \|_{L^{\infty}}\big)+\int_{0}^{T}\left( \|P \|_{L^{2}}^{2}+ \|P \|_{L^{3}}^{3}\right) \, d t \leq C.\label{wen2}
	\end{gather}
\end{corollary}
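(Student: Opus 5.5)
Since this is a direct consequence of Lemma \ref{lem1}, Lemma \ref{lem2}, and the bootstrap assumption \eqref{cp1}, the plan is simply to collect these bounds. Here $C$ may depend on the initial data, in particular on $E(0)$, $D(0)$ and $\bar\rho$. The notation $C(T)$ is reserved in this subsection, but the bounds below turn out to be independent of $T$.

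For \eqref{wen1}, I will use the energy inequality \eqref{x0}. It gives $\sup_t\|\sqrt\rho u\|_{L^2}^2\le 2E(0)$ and $\int_0^T\|\nabla u\|_{L^2}^2\,dt\le E(0)$. Then \eqref{2.15} gives $\sup_t\|\nabla u\|_{L^2}^2\le \sup_t D(t)\le CD(0)$ and $\int_0^T\|\sqrt\rho\dot u\|_{L^2}^2\,dt\le CD(0)$. Both $E(0)$ and $D(0)$ are finite under \eqref{xz}. Indeed, $\|P(\rho_0)\|_{L^1}=\|\rho_0\|_{L^\gamma}^\gamma$, and $\|P(\rho_0)\|_{L^2}^2\le \bar\rho^{\gamma}\|\rho_0\|_{L^\gamma}^\gamma$, where $\bar\rho<\infty$ because $\rho_0\in W^{1,q}$ with $q>3$.

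For \eqref{wen2}, I will use the following bounds in turn. First, $\sup_t\|P\|_{L^1}\le(\gamma-1)E(0)$ by \eqref{x0}. Second, $\sup_t\|P\|_{L^2}^2\le CD(0)$ and $\int_0^T\|P\|_{L^3}^3\,dt\le CD(0)$ by \eqref{2.15}. Third, $\|\rho\|_{L^\infty}\le 2\bar\rho$ by \eqref{cp1}, or $\frac32\bar\rho$ by Proposition \ref{0pro}.

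The only term needing a small argument is $\int_0^T\|P\|_{L^2}^2\,dt$. A plain $\sup\times T$ bound would grow with $T$, so I will interpolate instead:
\[
\|P\|_{L^2}^2\le \|P\|_{L^1}^{1/2}\|P\|_{L^3}^{3/2}\le \|P\|_{L^1}+\|P\|_{L^3}^3 .
\]
This does not remove the difficulty, because $\int_0^T\|P\|_{L^1}\,dt$ still grows linearly in $T$. So the $L^1$ endpoint must be avoided. The intended source, as the strategy section indicates, is the representation
\[
P=(-\Delta)^{-1}\operatorname{div}(\rho u)_t+(-\Delta)^{-1}\operatorname{div}\operatorname{div}(\rho u\otimes u)+(2\mu+\lambda)\operatorname{div}u .
\]
I will multiply it by $P$ and integrate in space and time. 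The time-derivative term is moved by integration by parts in $t$ onto $P_t=-\operatorname{div}(Pu)-(\gamma-1)P\operatorname{div}u$. The resulting terms are bounded by $\|\rho u\|_{L^2}\|P\|_{L^2}$ at the endpoints, and by $\|\rho u\|_{L^{6}}$, $\|P\|_{L^{3}}$, $\|\nabla u\|_{L^2}$ and $\|\rho\|_{L^\infty}$ in the integrand. The convection term gives $\int\|\rho u\|_{L^4}^2\|P\|_{L^2}$, which is controlled by $\int\|\nabla u\|_{L^2}^2$ times sup bounds. The last term is handled by Young's inequality, $\int P\operatorname{div}u\le\frac12\|P\|_{L^2}^2+C\|\nabla u\|_{L^2}^2$. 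Absorbing the $\frac12\|P\|_{L^2}^2$ closes the $L^2_tL^2_x$ bound uniformly in $T$. This step is the only mildly delicate point. The boundary terms vanish because $u\cdot\nu=0$; one should check that $(-\Delta)^{-1}\operatorname{div}$ is taken with the Neumann-type inverse compatible with the half-space. Everything else is a direct citation of \eqref{x0}, \eqref{2.15} and \eqref{cp1}.
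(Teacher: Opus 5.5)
Your proposal follows the paper's proof. All bounds except $\int_0^T\|P\|_{L^2}^2\,dt$ are read off from \eqref{x0}, \eqref{2.15} and \eqref{cp1}. That remaining term is obtained, as in the paper, by testing the representation of $P$ with $P$, moving the time derivative onto $P_t$ via the pressure equation, and absorbing with Young's inequality.

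Two of the norm pairings you sketch need correcting.

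\textbf{Endpoint term.} It is not bounded by $\|\rho u\|_{L^2}\|P\|_{L^2}$: the operator $(-\Delta)^{-1}\operatorname{div}$ has order $-1$ and is not bounded on $L^2$ in an unbounded domain. Use $\|(-\Delta)^{-1}\operatorname{div}(\rho u)\|_{L^\infty}\|P\|_{L^1}$ together with \eqref{III1-new}, as the paper does. Alternatively, pair $L^6$ against $L^{6/5}$: $\|(-\Delta)^{-1}\operatorname{div}(\rho u)\|_{L^6}\|P\|_{L^{6/5}}\le C\|\rho u\|_{L^2}\|P\|_{L^1}^{2/3}\|P\|_{L^2}^{1/3}$.

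\textbf{The term $(\gamma-1)\int_\Omega(-\Delta)^{-1}\operatorname{div}(\rho u)\,P\operatorname{div}u\,dx$.} This is the more important point. The pressure has to be placed in $L^2$, against the uniform bound $\|(-\Delta)^{-1}\operatorname{div}(\rho u)\|_{L^\infty}\le C\|\rho u\|_{L^2}^{1/2}\|\rho u\|_{L^6}^{1/2}$. Then Young's inequality yields $\frac14\|P\|_{L^2}^2+C\|\nabla u\|_{L^2}^2$, and the first part is absorbed. If you instead pair it as $\|\rho u\|_{L^2}\|P\|_{L^3}\|\nabla u\|_{L^2}$, you are left with $\int_0^T\|P\|_{L^3}\|\nabla u\|_{L^2}\,dt$. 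The available bounds, $\int_0^T\|P\|_{L^3}^3\,dt$ and $\int_0^T\|\nabla u\|_{L^2}^2\,dt$, do not control this uniformly in $T$.

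\textbf{Transport and convection terms.} The transport piece $\int_\Omega\nabla(-\Delta)^{-1}\operatorname{div}(\rho u)\cdot Pu\,dx$ and the convection term are handled by putting $P$ in $L^{3/2}$. Both then give $C\|P\|_{L^{3/2}}\|\nabla u\|_{L^2}^2$. This also avoids your $\|\rho u\|_{L^4}^2$ expression, which is not the right quantity: $\|\rho u\otimes u\|_{L^2}=\|\sqrt{\rho}u\|_{L^4}^2$.

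With these adjustments the argument is complete. Your remark that $(-\Delta)^{-1}$ must be the Neumann inverse is a point the paper leaves implicit.
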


\begin{proof}
It suffices to prove the estimate for
$\int_{0}^{T}\|P\|_{L^{2}}^{2}\,dt$ since the remaining estimates follow from subsection \ref{sub1}. Taking the divergence of \eqref{1}$_2$ gives
	\begin{align*}
		P=(-\Delta)^{-1} \operatorname{div}(\rho u)_{t}+(-\Delta)^{-1} \operatorname{div} \operatorname{div}(\rho u \otimes u)+(2 \mu+\lambda) \operatorname{div} u.
	\end{align*}
Multiplying this equality by $P$, integrating the resultant over $\Omega$, and using \eqref{PPP} and \eqref{III1-new}, we obtain that
	\begin{align*}
		\|P\|_{L^{2}}^{2}	\leq & \frac{d}{d t} \int_\Omega(-\Delta)^{-1} \operatorname{div}(\rho u) P \, d x+C\left\|(-\Delta)^{-1} \operatorname{div}(\rho u)\right\|_{L^{\infty}}\|P\|_{L^{2}}\|\nabla u\|_{L^{2}} \notag\\
		& +C\|P\|_{L^{\frac{3}{2}}}\|\nabla u\|_{L^{2}}^{2}+C\|P\|_{L^{2}}\|\nabla u\|_{L^{2}} \notag\\[2mm]
		\leq & \frac{d}{d t} \int_\Omega(-\Delta)^{-1} \operatorname{div}(\rho u) P \, d x+\frac{1}{2}\|P\|_{L^{2}}^{2}+C\|\nabla u\|_{L^{2}}^{2},
	\end{align*}
which implies that
	\begin{align*}
		\int_{0}^{T}\|P\|_{L^{2}}^{2}\,dt
		&\leq
		C\sup_{0\leq t\leq T}
		\left|
		\int_\Omega (-\Delta)^{-1} \operatorname{div}(\rho u)P(t)\,dx
		\right|
		+
		C\int_{0}^{T}\|\nabla u\|_{L^{2}}^{2}\,dt
		\\
		&\leq
		C\sup_{0\leq t\leq T}
		\|(-\Delta)^{-1} \operatorname{div}(\rho u)(t)\|_{L^\infty}
		\sup_{0\leq t\leq T}
		\|P(t)\|_{L^1}
		+
		C\int_{0}^{T}\|\nabla u\|_{L^{2}}^{2}\,dt
		\\
		&\leq C. \tag*{\qedhere}
	\end{align*}
\end{proof}

\begin{lemma}\label{lem52}
Under the assumption \eqref{cp1}, it holds that
\begin{gather}\label{uu0}
\sup _{0 \leq t \leq T}\big( t  \|\nabla u(t)\|_{L^{2}}^{2} +t\|P(t)\|_{L^{2}}^{2}\big)  +\int_{0}^{T} t \left(  \|\sqrt{\rho} \dot{u} \|_{L^{2}}^{2}+\|P\|_{L^{3}}^{3}\right)   \, d t \leq C,\\
\label{ll0}
\sup _{0 \leq t \leq T}\big(  t^{i} \| \sqrt{\rho}\dot{u}(t)\|_{L^{2}}^{2}+t^{i}\| P(t)\|_{L^{3}}^{3}\big) +\int_{0}^{T} t^{i}\left(\|\nabla \dot{u}\|_{L^{2}}^{2}+\|P\|_{L^{4}}^{4}\right) \, d t \leq C, \quad i=1,2.
\end{gather}
\end{lemma}
\begin{proof}
Using \eqref{zui1} and Corollary \ref{wen}, we derive that
	\begin{align}\label{o1}
		\frac{d}{d t}  \big(A(t)+C_{2}\|P\|_{L^{2}}^{2}\big)+\| \sqrt{\rho}\dot{u}\|_{L^{2}}^{2} +\| P\|_{L^{3}}^{3} \leq C  \left(\|\nabla u\|_{L^{2}}^{2}+ \|P \|_{L^{2}}^{2}\right)^{2}.
	\end{align}
	Multiplying \eqref{o1} by $t$ and using Corollary \ref{wen}, \eqref{o1}, and Gronwall's inequality, one gets \eqref{uu0}.
	
	Since $u^3\big|_{\partial\Omega}=0$, its tangential derivatives vanish
	\begin{align*}
		u^3_{x_1}=u^3_{x_2}=0
		\quad \text{on } \partial\Omega.
	\end{align*}
	It follows that
	\begin{align}\label{bian4}
		\dot u^3=
		u_t^3+u\cdot\nabla u^3=0
		\quad \text{on } \partial\Omega.
	\end{align}
	Moreover, differentiating $u^1_{x_3}=u^2_{x_3}=0$ in time and using the  boundary condition \eqref{4}, one also has
	\begin{align}\label{bian5}
		\partial_{x_3}\dot u^1=\partial_{x_3}\dot u^2=0
		\quad \text{on } \partial\Omega.
	\end{align}
	Thus, $\dot u$ satisfies the same homogeneous slip condition on the boundary.
	
Applying the operator $\partial_{t}+\operatorname{div}(u\,\cdot)$ to the $j$-th component of \eqref{1}$_2$ and multiplying the resulting equation by $\dot{u}^{j}$, we obtain  that
	\begin{align}\label{cll5}
		 \frac{1}{2} \frac{d}{d t} \| \sqrt{\rho}\dot{u}\|_{L^{2}}^{2}
		&=\mu \int_\Omega \dot{u}^{j}\big(\partial_{t} \Delta u^{j}+\operatorname{div}\big(u \Delta u^{j}\big)\big) \, d x+(\mu+\lambda) \int_\Omega \dot{u}^{j}\big(\partial_{t} \partial_{j}\operatorname{div} u+\operatorname{div}\big(u \partial_{j}\operatorname{div} u\big)\big) \, d x \notag\\
		&\quad-\int_\Omega \dot{u}^{j}\big(\partial_{j} P_{t}+\operatorname{div} (u \partial_{j}P )\big) \, d x \triangleq \sum_{i=1}^{3} K_{i}.
	\end{align}
We first treat $K_1$. Recall that $\dot u^j=u_t^j+u^k\partial_k u^j$,
then
\begin{align}\label{ex1}
	\Delta \dot u^j
	=
	\Delta\big(u_t^j+u^k\partial_k u^j\big)  &=
	\partial_t\Delta u^j
	+
	\Delta(u^k\partial_k u^j)\notag\\
	&=\partial_t\Delta u^j
	+\partial_i\partial_i(u^k\partial_k u^j) \notag\\
	&=\partial_t\Delta u^j+\partial_i\big(
	\partial_i u^k\partial_k u^j
	+
	u^k\partial_i\partial_k u^j
	\big) \notag\\
	&=\partial_t\Delta u^j+\Delta u^k\partial_k u^j
	+
	2\partial_i u^k\partial_i\partial_k u^j
	+
	u^k\partial_k\Delta u^j.
\end{align}
Moreover, one arrives at
\begin{align*}
	\operatorname{div}(u\Delta u^j)
	=
	\partial_k(u^k\Delta u^j) =
	u^k\partial_k\Delta u^j
	+\operatorname{div}u\Delta u^j,
\end{align*}
which together with \eqref{ex1}
indicates that
\begin{align*}
	\partial_t\Delta u^j
	+
	\operatorname{div}(u\Delta u^j)
	&=\Delta\dot u^j
	-\Delta u^k\partial_k u^j-2\partial_i u^k\partial_i\partial_k u^j+\operatorname{div}u\Delta u^j\notag\\
	&=
	\Delta \dot u^j
	-\partial_i
	\big(
	\partial_i u^k\partial_k u^j
	+\partial_k u^i\partial_k u^j
	-\operatorname{div}u\partial_i u^j
	\big).
\end{align*}
So we have
\begin{align}\label{k1}
	K_1
	=
	\mu\int_\Omega \dot u^j\Delta\dot u^j\,dx
	-\mu\int_\Omega \dot u^j
	\partial_i
	\big(
	\partial_i u^k\partial_k u^j
	+\partial_k u^i\partial_k u^j
	-\operatorname{div}u\partial_i u^j
	\big)\,dx.
\end{align}
By integrating by parts and using boundary condition \eqref{bian4} and \eqref{bian5}, we get that
\begin{align*}
	\mu\int_\Omega \dot u^j\Delta\dot u^j\,dx
	 =
	-\mu\int_\Omega |\nabla\dot u|^2\,dx
	+\mu\int_{\partial\Omega}\dot u^j\frac{\partial\dot u^j}{\partial\nu}\,dS=
	-\mu\int_\Omega |\nabla\dot u|^2\,dx.
\end{align*}
For the remaining terms on the right-hand side of \eqref{k1}, integration by parts yields
\begin{align}\label{kun1}
	&-\mu\int_\Omega \dot u^j
	\partial_i
	\big(
	\partial_i u^k\partial_k u^j
	+\partial_k u^i\partial_k u^j
	-\operatorname{div}u\partial_i u^j
	\big)\,dx\notag\\
	& =
	\mu\int_\Omega \partial_i\dot u^j
	\big(
	\partial_i u^k\partial_k u^j
	+\partial_k u^i\partial_k u^j
	-\operatorname{div}u\partial_i u^j
	\big)\,dx,
\end{align}
where the boundary term vanishes. Indeed, since $\nu=-e_3$, the boundary
term only involves
\begin{align*}
\int_{\partial\Omega}\dot u^j\big( \partial_3 u^k\partial_k u^j
+\partial_k u^3\partial_k u^j
-\operatorname{div}u\partial_3 u^j\big) \,dS.
\end{align*}
For $j=1,2$, this expression is zero on $\partial\Omega$ due to
\begin{align*}
\partial_3u^1=\partial_3u^2=0,
\quad
\partial_1u^3=\partial_2u^3=0
\quad\text{on }\partial\Omega.
\end{align*}
For $j=3$,  $\dot u^3=0$  on $\partial\Omega$.
Consequently, we derive that
\begin{align}
	K_1
	\le
	-\mu\|\nabla \dot{u}\|_{L^{2}}^{2}
	+
	C\int_\Omega |\nabla\dot u|\,|\nabla u|^2\,dx
	\leq -\frac{7\mu}{8}\|\nabla \dot{u}\|_{L^{2}}^{2}
	+C\|\nabla u\|_{L^{4}}^{4}.
	\label{k1-estimate}
\end{align}

We next estimate $K_2$. Owing to
\begin{align*}
	\operatorname{div}\dot u
	=
	\partial_j(u_t^j+u^k\partial_k u^j) =
	\partial_t\operatorname{div}u
	+u\cdot\nabla\operatorname{div}u
	+\partial_j u^k\partial_k u^j,
\end{align*}
we get that
\begin{align}\label{material-div-u}
\partial_t\operatorname{div}u
+u\cdot\nabla\operatorname{div}u
=\operatorname{div}\dot u-\partial_i u^k\partial_k u^i.
\end{align}
Moreover, due to
\begin{align*}
\operatorname{div}\big(u\partial_j\operatorname{div}u\big)
&=u\cdot\nabla\partial_j\operatorname{div}u
+\operatorname{div}u\partial_j\operatorname{div}u,
\end{align*}
one has that
\begin{align}
	\partial_t\partial_j\operatorname{div}u
	+
	\operatorname{div}\big(u\partial_j\operatorname{div}u\big)
	=
	\partial_t\partial_j\operatorname{div}u
	+
	u\cdot\nabla\partial_j\operatorname{div}u
	+
	\operatorname{div}u\partial_j\operatorname{div}u.
	\label{expand-div-term}
\end{align}
We rewrite the first two terms on the right-hand side of
\eqref{expand-div-term}. Since
\begin{align*}
\partial_j\big(u\cdot\nabla\operatorname{div}u\big)
=\partial_j(u^k\partial_k\operatorname{div}u)
=\partial_j u^k\partial_k\operatorname{div}u+u^k\partial_j\partial_k\operatorname{div}u
=\partial_j u^k\partial_k\operatorname{div}u+u\cdot\nabla\partial_j\operatorname{div}u,
\end{align*}
we have
\begin{align*}
u\cdot\nabla\partial_j\operatorname{div}u
=\partial_j\big(u\cdot\nabla\operatorname{div}u\big)
-\partial_j u^k\partial_k\operatorname{div}u.
\end{align*}
Therefore, one sees that
\begin{align}\label{first-two-terms}
\partial_t\partial_j\operatorname{div}u+
u\cdot\nabla\partial_j\operatorname{div}u
=\partial_j\big(\partial_t\operatorname{div}u+u\cdot\nabla\operatorname{div}u\big)
-\partial_j u^k\partial_k\operatorname{div}u.
\end{align}
Substituting \eqref{material-div-u} into \eqref{first-two-terms} leads to
\begin{align*}
	\partial_t\partial_j\operatorname{div}u
	+
	u\cdot\nabla\partial_j\operatorname{div}u
	=
	\partial_j\operatorname{div}\dot u
	-
	\partial_j(\partial_i u^k\partial_k u^i)
	-
	\partial_j u^k\partial_k\operatorname{div}u.
\end{align*}
This along with \eqref{expand-div-term} indicates that
\begin{align*}
 \partial_t\partial_j\operatorname{div}u
	+\operatorname{div}\big(u\partial_j(\operatorname{div}u)\big)=
	\partial_j\operatorname{div}\dot u
	-\partial_j(\partial_i u^k\partial_k u^i)
	+\operatorname{div}u\partial_j\operatorname{div}u
	-\partial_j u^k\partial_k\operatorname{div}u.
\end{align*}
Thus, it follows that
\begin{align}\label{kk2}
	K_2
	&=
	(\mu+\lambda)\int_\Omega \dot u^j\partial_j\operatorname{div}\dot u\,dx
	-(\mu+\lambda)\int_\Omega \dot u^j
	\partial_j(\partial_i u^k\partial_k u^i)\,dx
	\notag\\
	&\quad
	+(\mu+\lambda)\int_\Omega \dot u^j
	\big(
	\operatorname{div}u\partial_j\operatorname{div}u
	-\partial_j u^k\partial_k\operatorname{div}u
	\big)\,dx\notag\\
	&=-(\mu+\lambda)\int_\Omega (\operatorname{div}\dot u)^2\,dx+(\mu+\lambda)\int_\Omega
	\operatorname{div}\dot u\,
	\partial_i u^k\partial_k u^i\,dx\notag\\
	&\quad
	+(\mu+\lambda)\int_\Omega \dot u^j
	\big(
	\operatorname{div}u\partial_j\operatorname{div}u
	-\partial_j u^k\partial_k\operatorname{div}u
	\big)\,dx,
\end{align}
where the boundary term vanishes because $(\dot u\cdot\nu)\big|_{\partial\Omega}=0$.
For the last term on the right-hand side of \eqref{kk2}, we
estimate it directly by splitting it into two parts
\begin{align}
	J
	&\triangleq
	\int_\Omega \dot u^j
	\operatorname{div}u\partial_j\operatorname{div}u\,dx
	-
	\int_\Omega \dot u^j
	\partial_j u^k\partial_k\operatorname{div}u\,dx
	\triangleq J_1+J_2.
\end{align}
For $J_1$, integration by parts together with $(\dot u\cdot\nu)\big|_{\partial\Omega}=0$ gives that
\begin{align}\label{J1-est}
	J_1
	=
	\frac12\int_\Omega \dot u^j
	\partial_j(\operatorname{div}u)^2\,dx
	&=
	-\frac12\int_\Omega
	\operatorname{div}\dot u(\operatorname{div}u)^2\,dx
	+
	\frac12\int_{\partial\Omega}
	\dot u\cdot\nu(\operatorname{div}u)^2\,dS\notag\\
	&=
	-\frac12\int_\Omega
	\operatorname{div}\dot u(\operatorname{div}u)^2\,dx.
\end{align}
 For $J_2$, integrating by parts with respect to $x_k$ yields
\begin{align}\label{J2-est}
	J_2
	&=
	-\int_\Omega \dot u^j
	\partial_j u^k\partial_k\operatorname{div}u\,dx
	\notag\\
	&=
	\int_\Omega
	\partial_k(\dot u^j\partial_j u^k)\operatorname{div}u\,dx
	-
	\int_{\partial\Omega}
	\dot u^j\partial_j u^k\operatorname{div}u\nu_k\,dS
	\notag\\
	&=
	\int_\Omega
	\partial_k\dot u^j\partial_j u^k\operatorname{div}u\,dx
	+
	\int_\Omega
	\dot u^j\partial_j\operatorname{div}u\operatorname{div}u\,dx-
	\int_{\partial\Omega}
	\dot u^j\partial_j u^k\operatorname{div}u\nu_k\,dS\notag\\
	&=
	\int_\Omega
	\partial_k\dot u^j\partial_j u^k\operatorname{div}u\,dx
	+J_{1},
\end{align}
where the boundary term vanishes. Indeed, since $\nu=-e_3$, it only
contains
\begin{align*}
-\dot u^j\partial_j u^3\operatorname{div}u.
\end{align*}
For $j=1,2$, we have $\partial_j u^3=0$ on $\partial\Omega$, while for
$j=3$, we have $\dot u^3=0$ on $\partial\Omega$.
Combining \eqref{kk2}, \eqref{J1-est},  and \eqref{J2-est}, one finds that
\begin{align}
	K_2
	&\le
	-(\mu+\lambda)\int_\Omega (\operatorname{div}\dot u)^2\,dx
	+
	C\int_\Omega |\nabla\dot u|\,|\nabla u|^2\,dx\notag\\
	&\leq
	-\frac{\mu+\lambda}{2}\|\operatorname{div}\dot{u}\|_{L^{2}}^{2}
	+\frac{\mu}{8}\|\nabla\dot{u}\|_{L^{2}}^{2}
	+C\|\nabla u\|_{L^{4}}^{4}.
	\label{I2-estimate}
\end{align}

Now we estimate the last term $K_3$. We obtain from integration by parts that
	\begin{align*}
		K_3
		&=
		-\int_{\Omega}\dot u^j
		\big(
		\partial_jP_t+\operatorname{div}(u\partial_jP)
		\big)\,dx\\
		&=
		-\int_{\Omega}\dot u^j\partial_jP_t\,dx
		-
		\int_{\Omega}\dot u^j\partial_k(u^k\partial_jP)\,dx\\
		&=
		\int_{\Omega}P_t\partial_j\dot u^j\,dx
		+
		\int_{\Omega}u^k\partial_jP\,\partial_k\dot u^j\,dx,
	\end{align*}
	where the boundary terms vanish due to boundary condition  $u\cdot\nu=\dot u\cdot\nu=0$ on $\partial\Omega$. Integrating by parts with respect to $x_j$ leads to
	\begin{align}\label{kk3}
		\int_{\Omega}u^k\partial_jP\,\partial_k\dot u^j\,dx
		 =
		-\int_{\Omega}P\partial_j(u^k\partial_k\dot u^j)\,dx  =
		-\int_{\Omega}P\partial_j u^k\,\partial_k\dot u^j\,dx
		-\int_{\Omega}Pu^k\partial_k\operatorname{div}\dot u\,dx.
	\end{align}
	The corresponding boundary term is
	\begin{align*}
		\int_{\partial\Omega}P u^k\partial_k\dot u^j\nu_j\,dS.
	\end{align*}
	Since $\nu=-e_3$, this term only involves $j=3$. On the flat boundary, $\dot u^3=0$. Hence, its tangential derivatives vanish.
	Together with $u^3=0$ on $\partial\Omega$, we get
	$u^k\partial_k\dot u^3=0$ on $\partial\Omega$. Therefore, the boundary term vanishes. For the last term in \eqref{kk3}, integrating by parts with respect to $x_k$ gives
	\begin{align*}
		-\int_{\Omega}Pu^k\partial_k\operatorname{div}\dot u\,dx
		=
		\int_{\Omega}\partial_k(Pu^k)\operatorname{div}\dot u\,dx=
		\int_{\Omega}
		\left(u\cdot\nabla P+P\operatorname{div}u\right)
		\operatorname{div}\dot u\,dx,
	\end{align*}
	where the boundary term vanishes again because $u\cdot\nu=0$ on $\partial\Omega$.
	Consequently,
	\begin{align}\label{k3}
		K_3
		&=
		\int_{\Omega}
		\left(P_t+u\cdot\nabla P+P\operatorname{div}u\right)
		\operatorname{div}\dot u\,dx
		-\int_{\Omega}P\partial_j u^k\,\partial_k\dot u^j\,dx \notag \\
&=-(\gamma-1)\int_{\Omega}P\operatorname{div}u\,\operatorname{div}\dot u\,dx-\int_{\Omega}P\partial_j u^k\,\partial_k\dot u^j\,dx \notag \\
&\leq
		C\int_{\Omega}P|\nabla u||\nabla\dot u|\,dx\notag\\
		&\leq
		\frac{\mu}{4}\|\nabla\dot u\|_{L^2 }^2
		+
		C\int_{\Omega}P^2|\nabla u|^2\,dx\notag\\
		&\leq
		\frac{\mu}{4}\|\nabla\dot u\|_{L^2 }^2
		+
		C\left( \|P \|_{L^{4}}^{4}+\|\nabla u \|_{L^{4}}^{4}\right)
	\end{align}
due to	\begin{align*}
		P_t+u\cdot\nabla P+P\operatorname{div}u
		=
		-(\gamma-1)P\operatorname{div}u.
	\end{align*}
	
	Substituting the estimates \eqref{k1-estimate}, \eqref{I2-estimate}, and \eqref{k3} into \eqref{cll5} implies
	\begin{align}\label{ll6}
		\frac{d}{d t}\|\sqrt{\rho} \dot{u}\|_{L^{2}}^{2}+ \|\nabla \dot{u}\|_{L^{2}}^{2}  \leq C\left( \|P \|_{L^{4}}^{4}+\|\nabla u \|_{L^{4}}^{4}\right).
	\end{align}	
	Multiplying \eqref{ll6} by $t$ and using \eqref{40}, we get
	\begin{align}\label{cll7}
	 \frac{d}{d t} \left( t\|\sqrt{\rho} \dot{u}\|_{L^{2}}^{2}\right) +  t \|\nabla \dot{u}\|_{L^{2}}^{2}  \leq \|\sqrt{\rho} \dot{u}\|_{L^{2}}^{2}+C t \left( \|F\|_{L^{4}}^{4}+  \|\operatorname{curl} u\|_{L^{4}}^{4}\right) +C_{3} t \|P\|_{L^{4}}^{4}.
	\end{align}
	Choosing a sufficiently large constant $C_{4}>C_{3}+1$.	Taking $p=3$ in \eqref{xx2}, adding $C_{4} t$ times the resulting inequality
	to \eqref{cll7}, and using \eqref{42}, we obtain that
	\begin{align*}
		&\frac{d}{d t}\left(t \|\sqrt{\rho} \dot{u}\|_{L^{2}}^{2}+C_{4} t \|P\|_{L^{3}}^{3}\right) + t\|\nabla \dot{u}\|_{L^{2}}^{2}+t\|P\|_{L^{4}}^{4} \\
		& \leq C \left(\|\sqrt{\rho} \dot{u}\|_{L^{2}}^{2}+\|P\|_{L^{3}}^{3} \right)+C t\left( \|F\|_{L^{4}}^{4}+ \|\operatorname{curl} u\|_{L^{4}}^{4}\right)  \\[1mm]
		& \leq  C \left(\|\sqrt{\rho} \dot{u}\|_{L^{2}}^{2}+\|P\|_{L^{3}}^{3} \right)+C t\|\sqrt{\rho} \dot{u}\|_{L^{2}}^{2}\left(\|\sqrt{\rho} \dot{u}\|_{L^{2}}^{2}+\|\nabla u\|_{L^{2}}^{2}+\|P\|_{L^{2}}^{2}\right),
	\end{align*}
which together with Gronwall's inequality gives \eqref{ll0} with $i=1$.
Similarly, multiplying \eqref{ll6} by $t^{2}$ and adding $C_{4} t^{2}$ times \eqref{xx2} with $p=3$, we can derive \eqref{ll0} with $i=2$.
\end{proof}

\begin{lemma}\label{lem53}
	Under the assumption \eqref{cp1}, there exists a positive
	constant $C(T)$  such that
	\begin{gather*}
		\sup _{0 \leq t \leq T} \left( t \| \nabla^{2} u(t)\|_{L^{2}}^{2}+\|  \rho_{t} (t) \|_{L^{2}}^{2}\right) +\int_{0}^{T} t\| \nabla u_{t}  \|_{L^{2}}^{2}\, d t \leq C(T),\\
		\sup _{0 \leq t \leq T}\left( \|\nabla \rho(t)\|_{L^{2}}+\|\nabla \rho(t)\|_{L^{q}}\right) +\int_{0}^{T}\left( \|\nabla^{2} u \|_{L^{q}}+ \|\nabla^{2} u \|_{L^{2}}+\|\nabla u\|_{L^{\infty}}\right)\, d t \leq C(T).
	\end{gather*}
\end{lemma}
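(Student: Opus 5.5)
The plan is the standard Hoff/Huang--Li--Xin scheme for higher-order estimates, using the elliptic bounds of Lemmas \ref{ellp}, \ref{FF}, \ref{cinf} together with Corollary \ref{wen} and Lemma \ref{lem52}. Throughout, $\rho\le 2\bar\rho$ and the bounds already obtained are used freely.

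\emph{Step 1: the $t$-weighted second derivatives and $u_t$.} Write the momentum equation as the Lam\'e system \eqref{dd} with $K=\rho\dot u+\nabla P$. Lemma \ref{ellp}(i) gives
\begin{align*}
\|\nabla^2u\|_{L^2}\le C\big(\|\rho\dot u\|_{L^2}+\|\nabla P\|_{L^2}\big),
\end{align*}
so $\nabla\rho$ must be controlled at the same time. It is cleaner to split $\nabla^2u$ via \eqref{41}: $\|\nabla F\|_{L^2}+\|\nabla\operatorname{curl}u\|_{L^2}\le C\|\rho\dot u\|_{L^2}$. Then $\sqrt t\,(\nabla F,\nabla\operatorname{curl}u)\in L^\infty L^2$ by \eqref{ll0}. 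For $u_t=\dot u-u\cdot\nabla u$ we have
\begin{align*}
\|\nabla u_t\|_{L^2}\le\|\nabla\dot u\|_{L^2}+\|u\|_{L^6}\|\nabla^2u\|_{L^3}+\|\nabla u\|_{L^4}^2 .
\end{align*}
Each piece is bounded in $L^2(t\,dt)$ using \eqref{ll0}, \eqref{43} and the $L^q$ bound on $\nabla^2 u$ from Step 2. The bound $\rho_t=-u\cdot\nabla\rho-\rho\operatorname{div}u\in L^\infty L^2$ follows once $\nabla\rho\in L^\infty L^3$ (which Step 2 gives, since $q>3$ and $\nabla\rho\in L^2\cap L^q$), because $u\in L^\infty L^6$. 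Finally $t\|\nabla^2u\|_{L^2}^2$ comes from the Lam\'e bound above.

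\emph{Step 2: the density gradient, the main step.} For $2\le r\le q$, differentiate \eqref{xx1} (or the continuity equation) to get
\begin{align*}
\frac{d}{dt}\|\nabla P\|_{L^r}\le C\|\nabla u\|_{L^\infty}\|\nabla P\|_{L^r}+C\|\nabla^2u\|_{L^r},
\end{align*}
and likewise for $\nabla\rho$. Control $\|\nabla^2u\|_{L^r}\le C(\|\rho\dot u\|_{L^r}+\|\nabla P\|_{L^r})$ by Lemma \ref{ellp}. For $\|\nabla u\|_{L^\infty}$ use Lemma \ref{cinf}, whose log-dependence on $\|\nabla P\|_{L^q}$ carries the bounded coefficient $\|P\|_{L^\infty}+\|P\|_{L^2}\le C$. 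With $f=e+\|\nabla P\|_{L^2}+\|\nabla P\|_{L^q}$ this gives
\begin{align*}
\frac{d}{dt}\ln f\le C\big(1+\|\rho\dot u\|_{L^2}+\|\rho\dot u\|_{L^q}\big)\big(1+\ln f\big) .
\end{align*}
Gronwall in $\ln f$ then bounds $f$, provided $\int_0^T(\|\rho\dot u\|_{L^2}+\|\rho\dot u\|_{L^q})\,dt\le C(T)$.

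\emph{Main obstacle: the time integrability of $\rho\dot u$.} The $L^2$ part is bounded by $\int_0^T\|\sqrt\rho\dot u\|_{L^2}^2dt$ from \eqref{wen1}. For the $L^q$ part, use $\|\rho\dot u\|_{L^q}\le C\|\rho\dot u\|_{L^2}^{\frac{6-q}{2q}}\|\nabla\dot u\|_{L^2}^{\frac{3q-6}{2q}}$ and write
\begin{align*}
\|\rho\dot u\|_{L^q}\le C\big(t\|\sqrt\rho\dot u\|^2_{L^2}\big)^{\frac{6-q}{4q}}\big(t\|\nabla\dot u\|_{L^2}^2\big)^{\frac{3q-6}{4q}}\,t^{-\frac12}.
\end{align*}
H\"older with \eqref{ll0} ($i=1$) leaves $\int_0^T t^{-\frac12}\cdot(\text{an } L^{p}\text{ function})\,dt$ with $p=\frac{4q}{3q-6}$. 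Since $p'=\frac{4q}{q+6}$ and $t^{-p'/2}$ is integrable near $0$ exactly when $p'<2$, i.e. $q<6$, this integral is finite; this is the reason for the restriction $q\in(3,6)$. Once $f$ is bounded, $\int_0^T(\|\nabla^2u\|_{L^q}+\|\nabla^2u\|_{L^2}+\|\nabla u\|_{L^\infty})\,dt\le C(T)$ follows directly from the elliptic bounds above, closing the lemma.
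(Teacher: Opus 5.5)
Your proposal is correct and follows essentially the same route as the paper. The paper also runs a logarithmic Gronwall argument for the density gradient, built on Lemma \ref{cinf} and the Lam\'e bound $\|\nabla^2u\|_{L^q}\le C(\|\rho\dot u\|_{L^q}+\|\nabla P\|_{L^q})$. It gets the time integrability of $\|\rho\dot u\|_{L^q}$ by interpolating between $\|\rho\dot u\|_{L^2}$ and $\|\nabla\dot u\|_{L^2}$ against the weights of Lemma \ref{lem52}, which is where $q<6$ enters, and then deduces the bounds on $\rho_t$, $t\|\nabla^2u\|_{L^2}^2$ and $\int_0^T t\|\nabla u_t\|_{L^2}^2\,dt$.

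The only differences are cosmetic. You run the argument on $\nabla P$ in $L^2$ and $L^q$ simultaneously and recover $\nabla\rho$ afterwards by a linear Gronwall step. You also use a single $t^{-1/2}$-weighted H\"older estimate on $[0,T]$ rather than the paper's split at $\sigma(T)=\min\{1,T\}$, which suffices since only a $T$-dependent constant is needed.
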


\begin{proof}
	For $p \in[2,6]$, $|\nabla \rho|^{p}$  satisfies
	\begin{align*}
	 \left(|\nabla \rho|^{p}\right)_{t}+\operatorname{div}\left(|\nabla \rho|^{p} u\right)+(p-1)|\nabla \rho|^{p} \operatorname{div} u
	 +p|\nabla \rho|^{p-2}(\nabla \rho)^{t} \nabla u(\nabla \rho)+p \rho|\nabla \rho|^{p-2} \nabla \rho \nabla \operatorname{div} u=0,
	\end{align*}
which combined with Lemma \ref{cinf} yields that
	\begin{align}\label{b3}
		\frac{d}{d t}\|\nabla \rho\|_{L^{q}} &\leq C\|\nabla^{2} u \|_{L^{q}}+C\left( \|\nabla u\|_{L^{\infty}}+1\right) \|\nabla \rho\|_{L^{q}}\notag\\
		&\leq C\|\sqrt{\rho}\dot{u} \|_{L^{q}}+C\left( \|\nabla u\|_{L^{\infty}}+1\right)\|\nabla \rho\|_{L^{q}}\notag\\
		&\leq  C\big(1+\|\sqrt{\rho} \dot{u}\|_{L^{2}}+\|\sqrt{\rho} \dot{u}\|_{L^{q}}+\ln \left(e+\|\nabla \rho\|_{L^{q}}\right)\big)\|\nabla \rho\|_{L^{q}}+C\|\sqrt{\rho} \dot{u}\|_{L^{q}}.
	\end{align}
Here we have used the elliptic estimate for \eqref{1}$_2$:
	\begin{align}\label{ell}
		\|\nabla^{2}u\|_{L^{q}}
		\leq
		C\left(
		\|\sqrt{\rho}\dot{u}\|_{L^{q}}
		+\|\nabla P\|_{L^{q}}
		\right).
	\end{align}

	It follows from \eqref{b3}   that
	\begin{align}\label{lll5}
		\frac{d}{d t} \ln \left( e+\|\nabla \rho\|_{L^{q}}\right)  \leq C \ln \big(e+\|\nabla \rho\|_{L^{q}}\big)\left( 1+\|\sqrt{\rho} \dot{u}\|_{L^{2}}+\| \sqrt{\rho}\dot{u} \|_{L^{q}}\right).
	\end{align}
	Set
	\begin{align*}
	\sigma(T)\triangleq \min\{1,T\}.
\end{align*}
	By Corollary \ref{wen} and Lemma \ref{lem52}, we have
	\begin{align*}
		\int_{0}^{T}\|\sqrt{\rho}  \dot{u} \|_{L^{q}}\, d t
		&\leq C\left(\int_{0}^{\sigma(T)}\| \sqrt{\rho} \dot{u} \|_{L^{2}}^{2}\, d t\right)^{\frac{6-q}{4 q}}\left(\int_{0}^{\sigma(T)} t \| \nabla \dot{u} \|_{L^{2}}^{2} \, d t\right)^{\frac{3 q-6}{4 q}}\left(\int_{0}^{\sigma(T)} t^{-\frac{3 q-6}{2 q}}\, d t\right)^{\frac{1}{2}} \\
		&\quad+C\left(\int_{\sigma(T)}^{T}t \| \sqrt{\rho} \dot{u} \|_{L^{2}}^{2} \, d t\right)^{\frac{6-q}{4 q}}\left(\int_{\sigma(T)}^{T} t^{2} \| \nabla \dot{u}  \|_{L^{2}}^{2} \, d t\right)^{\frac{3 q-6}{4 q}}\left(\int_{\sigma(T)}^{T} t^{-\frac{5q-6}{2q}}\, d t\right)^{\frac{1}{2}}\\[2mm]
		&\leq C.
	\end{align*}
This implies that
	\begin{align}\label{lll7}
		\int_{0}^{T}\left( 1+\|\sqrt{\rho} \dot{u}\|_{L^{2}}+\| \sqrt{\rho}\dot{u} \|_{L^{q}}\right)\, d t \leq C(T).
	\end{align}
Applying Gronwall's inequality to \eqref{lll5} and using \eqref{lll7} indicate that
	\begin{align}\label{v1}
		\sup _{0 \leq t \leq T}\|\nabla \rho\|_{L^{q}} \leq C(T).
	\end{align}
Then Lemma \ref{cinf} gives
	\begin{align}\label{v2}
		\int_{0}^{T}\|\nabla u\|_{L^{\infty}} d t \leq C(T).
	\end{align}	
	By a similar argument, one has that
	\begin{align}\label{v3}
		\sup _{0 \leq t \leq T}\|\nabla \rho\|_{L^{2}} \leq C(T).
	\end{align}
	
It remains to estimate the time derivative of $\rho$ and the higher
spatial derivatives of $u$. From the continuity equation \eqref{1}$_1$,
the elliptic estimate \eqref{ell}, and Gagliardo--Nirenberg
inequality, we obtain that
	\begin{align*}
		\|\rho_{t} \|_{L^{2}}  &\leq C\big( \|\nabla \rho\|_{L^{3}}\|u\|_{L^{6}}+ \|\rho\|_{L^{\infty}}\|\nabla u\|_{L^{2}}\big)  \leq C\|\nabla u\|_{L^{2}}, \\
		\|  \nabla^{2} u \|_{L^{2}} +\|\nabla^{2} u \|_{L^{q}}  &\leq C\left( \| \sqrt{\rho} \dot{u}\|_{L^{2}}+\| \sqrt{\rho} \dot{u} \|_{L^{q}}+\|\nabla \rho\|_{L^{2}}+\|\nabla \rho\|_{L^{q}}\right),\\
\|\nabla u_{t} \|_{L^{2}}  &\leq C\left( \|\nabla \dot{u}\|_{L^{2}}+ \|u\|_{L^{\infty}} \|\nabla^{2} u \|_{L^{2}}+ \|\nabla u\|_{L^{4}}^{2}\right)  \\& \leq C\Big( \|\nabla \dot{u}\|_{L^{2}}+ \|\nabla u\|_{L^{2}}^{\frac{1}{2}} \| \nabla^{2} u \|_{L^{2}}^{\frac{3}{2}}\Big)\\
& \leq C\Big( \|\nabla \dot{u}\|_{L^{2}}+ \|\nabla u\|_{L^{2}}^{\frac{1}{2}} \left( \| \sqrt{\rho} \dot{u}  \|_{L^{2}}+\|\nabla \rho\|_{L^{2}}\right) ^{\frac{3}{2}}\Big).
	\end{align*}
	Combining these estimates with \eqref{v1}--\eqref{v3} and Lemma
	\ref{lem52}, we obtain the desired bounds.
\end{proof}

\section{Proof of Theorem \ref{th1}}\label{sec4}

We are now in a position to prove Theorem \ref{th1}. By the local
well-posedness result in Proposition \ref{local}, there exists a time
$T_{*}>0$ such that the initial-boundary value problem
\eqref{1}--\eqref{4} admits a unique strong solution $(\rho,u)$ on
$\Omega\times(0,T_{*}]$. We first verify that the bootstrap assumptions are valid for a short time.
Indeed, by \eqref{xz}, \eqref{sca}, and the choice of $c_0$ in
\eqref{2.43}, the initial data satisfy \eqref{cp1} at $t=0$.
Therefore, by continuity, there exists $T_1\in(0,T_*]$ such that
\eqref{cp1} holds on $[0,T_1]$.

Define
\begin{align*}
	T^{*}\triangleq \sup \left\{ T>0 \,\middle|\,
	(\rho, u) \text{ is a strong solution of } \eqref{1}-\eqref{4}
	\text{ on } \Omega\times[0,T] \text{ satisfying } \eqref{cp1}
	\right\}.
\end{align*}
Clearly, $T^{*}\geq T_1>0$.   For any $0<\tau<T\leq T^{*}$ with $T<\infty$, Lemma
\ref{lem53} gives
\begin{align}\label{ccon1}
	u\in C\bigl([\tau,T];D^1\bigr).
\end{align}
In addition, the transport equation and the uniform estimates yield
\begin{align}\label{ccon2}
	\rho u\in C\bigl([0,T];L^2\bigr),
	\qquad
	\rho\in C\bigl([0,T];H^1\cap W^{1,q}\bigr).
\end{align}
If $T^{*}<\infty$, the limit
\begin{align*}
	(\rho,u)(T^{*})
	=
	\lim_{t\to T^{*}}(\rho,u)(t)
\end{align*}
holds. The limiting data at time $T^{*}$ also satisfy the  condition
required in \eqref{sou}.
Taking
$(\rho,u)(T^{*})$ as a new initial data, then Proposition \ref{local} yields
a strong solution on $[T^{*},T^{*}+\delta]$ for some $\delta>0$.  This contradicts
the definition of $T^{*}$. Therefore, $T^{*}=\infty$, and the strong solution exists globally in
time. The uniqueness follows from the local uniqueness and the continuation
argument above. This completes the proof of Theorem \ref{th1}. \hfill$\square$

\section*{Conflict of interest}
The authors declare that they have no conflicts to disclose.

\section*{Data availability}
No data was used for the research described in the article.



\end{document}